\newcommand{\Z}{\ensuremath{\mathbb{Z}}}
\newcommand{\R}{\ensuremath{\mathbb{R}}}
\newcommand{\C}{\ensuremath{\mathbb{C}}}
\newcommand{\Cc}{\ensuremath{\mathcal{C}}}
\newcommand{\D}{\ensuremath{\mathbb{D}}}
\newcommand{\Id}{{\rm Id}}
\renewcommand{\le}{\ensuremath{\leqslant}}
\renewcommand{\ge}{\ensuremath{\geqslant}}
\renewcommand{\leq}{\ensuremath{\leqslant}}
\renewcommand{\geq}{\ensuremath{\geqslant}}
\newcommand{\U}{\mathcal{U}}
\newcommand\be{\begin{eqnarray}}
\newcommand\ee{\end{eqnarray}}
\newcommand{\norm}[1]{ \| #1  \|}
\newcommand{\bnorm}[1]{ \big\| #1  \big\|}
\newcommand{\bgnorm}[1]{ \bigg\| #1  \bigg\|}
\newcommand{\Bgnorm}[1]{ \Bigg\| #1  \Bigg\|}
\newcommand{\ot}{\otimes}
\newcommand{\co}{\colon}
\def\Ker{{\rm Ker} \, }
\def\Re{{\rm Re} }
\newcommand{\ovl}{\overline}
\newcommand{\ul}{\mathcal{U}}
\newcommand{\epsi}{\varepsilon}
\newcommand{\Rad}{{\rm Rad}}
\newcommand{\Ran}{{\rm Ran}}
\newcommand{\biggnorm}[1]{\biggl\lVert#1\biggr\rVert}
\newtheorem{thm}{Theorem}[section]
\newtheorem{prop}[thm]{Proposition}
\newtheorem{cor}[thm]{Corollary}
\newtheorem{lemma}[thm]{Lemma}
\theoremstyle{definition}
\newtheorem{remark}[thm]{Remark}
\newcommand{\beq}{\begin{equation}}
\newcommand{\eeq}{\end{equation}}
\numberwithin{equation}{section}
\begin{document}
\selectlanguage{english}
\title[Isometric dilations and $H^\infty$ calculus]{Isometric dilations and $H^\infty$ calculus for bounded analytic
semigroups and Ritt operators}

\author{C\'edric Arhancet}
\address{Laboratoire de Math\'ematiques, Universit\'e de Franche-Comt\'e,
25030 Besan\c{c}on Cedex,  France}
\email{cedric.arhancet@univ-fcomte.fr}

\author{Stephan Fackler}
\address{Institute of Applied Analysis, University of Ulm, Helmholtzstr. 18, 89069 Ulm, Germany}
\email{stephan.fackler@uni-ulm.de}

\author{Christian Le Merdy}
\address{Laboratoire de Math\'ematiques, Universit\'e de Franche-Comt\'e,
25030 Besan\c{c}on Cedex,  France}
\email{christian.lemerdy@univ-fcomte.fr}


\subjclass[2010]{Primary 47A60; Secondary 47D06, 47A20, 22D12}
\keywords{Dilation, Ritt operator, sectorial operator, group representation, 
functional calculus, semigroup, amenable group}

\begin{abstract}
We show that any bounded analytic semigroup on $L^p$ (with $1<p<\infty$)
whose negative generator admits a bounded $H^{\infty}(\Sigma_\theta)$ 
functional calculus for some $\theta \in (0,\frac{\pi}{2})$
can be dilated into a bounded analytic semigroup $(R_t)_{t\geq 0}$
on a bigger $L^p$-space in such a way that $R_t$ is a positive
contraction for any $t\geq 0$. We also establish a discrete analogue 
for Ritt operators and consider the case when $L^p$-spaces
are replaced by more general Banach spaces. In connection
with these functional calculus issues, we study isometric 
dilations of bounded continuous representations of amenable groups
on Banach spaces and establish various generalizations of 
Dixmier's unitarization theorem.
\end{abstract}

\maketitle


\section{Introduction}


In \cite[Remark 4.c]{Weis01}, Weis showed that if $(T_t)_{t \geq 0}$ is a
bounded analytic semigroup on a space $L^p(\Omega)$ (with $1<p<\infty$) 
such that each operator $T_t \co L^p(\Omega) \to L^p(\Omega)$ is a positive contraction,
then its negative generator $A$ admits a bounded $H^{\infty}(\Sigma_\theta)$ functional calculus 
for some angle $0<\theta < \frac{\pi}{2}$. Here and later on in this paper,
\begin{equation}\label{sector}
\Sigma_\theta=\{z\in\C^*\ :\ \vert{\rm Arg}(z)\vert <\theta\}
\end{equation} 
denotes the open sector of angle $2\theta$ around the positive real axis $\R_+^*$. 
The first main result of this paper is the following converse, which says that this class of semigroups
generates {\it by dilation} the class of all bounded analytic semigroups on an $L^p$-space
whose negative generator admits a bounded $H^{\infty}(\Sigma_\theta)$ functional calculus 
for some $\theta \in (0,\frac{\pi}{2})$. More precisely, we will prove the following.

\begin{thm}
\label{Th main sectorial Lp 0}
Let $\Omega$ be a measure space and let $1<p<\infty$. Let $(T_t)_{t \geq 0}$ be a 
bounded analytic semigroup on $L^p(\Omega)$ and assume that its negative generator 
admits a bounded $H^\infty(\Sigma_{\theta})$ functional calculus for some 
$0<\theta < \frac{\pi}{2}$. Then there exist a measure space 
$\Omega'$, a bounded analytic semigroup $(R_t)_{t \geq 0}$ on the space $L^p(\Omega')$
such that each $R_t \co L^p(\Omega') \to L^p(\Omega')$ is a positive contraction,
and two bounded operators $J \co L^p(\Omega) \to L^p(\Omega')$ and $Q \co L^p(\Omega') \to L^p(\Omega)$ 
such that 
$$
T_{t}=QR_t J,\qquad \text{for all }t \geq 0.
$$
\end{thm}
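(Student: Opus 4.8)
The plan is to turn the functional–calculus hypothesis into a dilation problem for a \emph{bounded} representation of an \emph{amenable} group, to solve that problem with an \emph{isometric} target, and then to exploit the Banach--Lamperti structure of isometries of $L^p$-spaces to pass from ``isometric'' to ``positive contractive'', all while keeping track of analyticity.

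First I would normalise the situation. Splitting $L^p(\Omega)=\Ker(A)\oplus\ovl{\Ran(A)}$, the restriction of $(T_t)$ to $\Ker(A)$ is the identity semigroup and dilates trivially, while on $\ovl{\Ran(A)}$ the operator $A$ is injective with dense range and still admits a bounded $H^\infty(\Sigma_\theta)$ functional calculus for the same $\theta<\frac{\pi}{2}$; so one may assume $A$ injective with dense range. Next, since $\theta<\frac{\pi}{2}$, the semigroup extends to a bounded analytic semigroup on the sector $\Sigma_{\frac{\pi}{2}-\theta}$, and the boundedness of the calculus yields angle‑sensitive McIntosh‑type square function estimates for $A$ on $L^p(\Omega)$. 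The key step is then to use these to realise $(T_t)_{t\ge0}$ — indeed the whole holomorphic family $(T_z)_{z\in\ovl{\Sigma_{\frac{\pi}{2}-\theta}}}$ — as a \emph{compression of a bounded strongly continuous representation of an amenable group}: one produces an amenable group $G$, a bounded continuous representation $\Pi\colon G\to B(X)$ on a closed subspace $X$ of a quotient of some $L^p$-space, a continuous one‑parameter semigroup $(g_t)_{t\ge0}$ in $G$, and bounded maps $J_1\colon L^p(\Omega)\to X$, $Q_1\colon X\to L^p(\Omega)$ with $T_t=Q_1\,\Pi(g_t)\,J_1$. That one can take the auxiliary representation genuinely \emph{bounded} — rather than of exponential type $\theta$, as the imaginary powers $(A^{is})_s$ are, and rather than a mere one‑parameter semigroup, as the boundary families $(T_{te^{i\omega}})_t$ are — is exactly the dividend of the hypothesis $\theta<\frac{\pi}{2}$.

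With this in hand, I would apply the isometric dilation theorem for bounded continuous representations of amenable groups on $L^p$-type spaces established earlier in the paper — our Banach‑space generalisation of Dixmier's unitarisation theorem: there are a measure space $\Omega_1$, an isometric strongly continuous representation $\sigma\colon G\to B(L^p(\Omega_1))$, and bounded maps $J_2,Q_2$ with $\Pi(g)=Q_2\,\sigma(g)\,J_2$ for all $g\in G$, whence $T_t=(Q_1Q_2)\,\sigma(g_t)\,(J_2J_1)$. Now each $\sigma(g)$ is, by the Banach--Lamperti description, a weighted composition operator, so $\sigma(g)=M_{u_g}\,|\sigma(g)|$ with $g\mapsto|\sigma(g)|$ a \emph{positive} isometric representation of $G$ and $M_{u_g}$ multiplication by a unimodular function; after a further enlargement of the underlying measure space the unimodular cocycle $(u_g)$ is absorbed (for instance by conjugating it into a positive translation representation on an auxiliary $L^p(\T)$-factor), producing a positive isometric strongly continuous representation $\rho$ of $G$ on an $L^p(\Omega')$ together with bounded $J,Q$ such that $T_t=Q\,\rho(g_t)\,J$. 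Finally, carrying the holomorphic/sectorial data of the second step through the whole construction, the positive contractive bounded analytic semigroup $(R_t)_{t\ge0}$ with $T_t=QR_tJ$ is obtained as the compression of the corresponding holomorphic (Poisson/subordination‑type) averaging of $\rho$, which is positive, contractive and bounded analytic because $\rho$ is a positive isometric representation.

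The main obstacle is the step that extracts a genuinely \emph{bounded} amenable‑group representation from the sole assumption of a bounded $H^\infty(\Sigma_\theta)$ calculus with $\theta<\frac{\pi}{2}$: the naive candidates fail, so one has to combine the square‑function characterisation of the $H^\infty$ calculus with the angle information to build the auxiliary representation on an abstract dilation space, and only afterwards return to an honest $L^p$-space through the amenable dilation theorem. A secondary delicate point is propagating analyticity \emph{and} positivity \emph{and} contractivity simultaneously through the isometric‑dilation and Lamperti steps, which is why it is convenient to keep the full sectorial family $(T_z)$ in play from the outset rather than the single‑parameter semigroup alone.
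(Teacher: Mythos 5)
Your overall strategy (dilate into a bounded representation of an amenable group, then into an isometric one, then force positivity and analyticity) is close in spirit to the alternative route the paper only sketches in Remark \ref{alternative}(2), not to its actual proof; but as written it has a genuine gap at the very last step, which is where the real work lies. If you first obtain $T_t=Q\,\rho(g_t)\,J$ with $\rho$ a (positive) isometric representation of $\R$, and then define $R_t$ by a ``Poisson/subordination-type averaging'' of $\rho$, say $R_t=\int_0^\infty k_t(s)\rho(s)\,ds$, then compressing gives $Q R_t J=\int_0^\infty k_t(s)T_s\,ds$, which is a dilation of the \emph{subordinated} semigroup $e^{-tA^\beta}$, not of $T_t$ itself. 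Subordination is indeed how one converts an isometric group into a positive, contractive, analytic semigroup, but it destroys the dilation identity unless you prepare for it in advance. The paper's key device, absent from your proposal, is the two-sided fractional power bookkeeping: choose $\alpha>1$ with $\alpha\theta<\frac{\pi}{2}$ (Lemma \ref{fraction}), dilate the semigroup $e^{-tA^{\alpha}}$ (not $e^{-tA}$) through a $C_0$-group of positive isometries $U_t=e^{-tC}$, and only then set $B=C^{1/\alpha}$; Yosida's subordination formula $e^{-tC^{1/\alpha}}=\int_0^\infty f_{t,1/\alpha}(s)U_s\,ds$ applied simultaneously to $C$ and to $A^\alpha$ yields both the positivity/contractivity of $R_t=e^{-tB}$, its analyticity (since $C^{1/\alpha}$ is sectorial of type $<\frac{\pi}{2}$), \emph{and} the identity $T_t=e^{-t(A^\alpha)^{1/\alpha}}=Qe^{-tB}J$. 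Without this preliminary passage to $A^\alpha$, ``carrying the holomorphic data through'' does not rescue the argument: keeping the family $(T_z)$ in play does not make the dilating group analytic, and no averaging of $\rho$ compresses back to $T_t$.

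Two further points. First, your Lamperti/cocycle-absorption step is both problematic and unnecessary: it is unavailable at $p=2$ (isometries of $L^2$ are not weighted composition operators), the absorption of a measurable unimodular cocycle requires real care (measurable choice of arguments, strong continuity), and in any case the paper's amenable dilation theorem (Theorem \ref{thm:factorization_amenable_groups} and Corollary \ref{DDLp}) already produces a \emph{positive} isometric representation for free, because the dilating representation is built from right translations on $L^p(G;X)$, the averaging projection is a positive contraction, and the range of a positive contractive projection on an $L^p$-space is again (positively, isometrically) an $L^p$-space. Second, the intermediate space in your step two is delicate: the Fr\"ohlich--Weis-type bounded group for $X=L^p(\Omega)$ naturally lives on a space of the kind $L^p(\Omega;L^2)$, i.e.\ an $SQ_p$-space rather than an $L^p$-space, so the $L^p$-version of the amenable dilation (with its positivity and ``honest $L^p(\Omega')$'' conclusions) does not apply as directly as you claim; one must argue how to return to a genuine $L^p$-space. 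The paper avoids all of this by not going through amenable groups at all: its Theorem \ref{Th dilation Delta semigroup} constructs the isometric, positive $C_0$-group concretely on $X\oplus_p L^p(\widehat{\Omega};X)$ (an $L^p$-space when $X$ is) via Gaussian second quantization of the translation group and Kalton--Weis square functions, and then concludes with the fractional-power/subordination argument described above.
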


Note that in the above situation, $J$ is an isomorphic embedding 
whereas $JQ$ is a bounded projection. Hence the new space $L^p(\Omega')$ can be seen as
a bigger space than the initial $L^p(\Omega)$, containing the latter as a 
complemented subspace.

Theorem \ref{Th main sectorial Lp 0} improves a recent result by the second named 
author on the structure of $L^p$-semigroups with a bounded $H^\infty$ functional calculus \cite{Fac13c}. 
Together with Weis's theorem, this  provides a complete characterization 
of bounded $H^\infty$ functional calculus on $L^p$-spaces. This characterization should be regarded
as an $L^p$-analogue of the theorem from \cite{Mer98} which says that if $(T_t)_{t\geq 0}$
is  a bounded analytic semigroup on some Hilbert space, with negative generator $A$,
then $A$ admits a bounded $H^{\infty}(\Sigma_\theta)$ functional calculus 
for some angle $0<\theta < \frac{\pi}{2}$ if and only if $(T_t)_{t\geq 0}$ is similar
to a contractive semigroup.

$H^{\infty}$ functional calculus is a very useful and important tool in 
various areas: harmonic analysis of semigroups, multiplier theory, Kato's square root problem,  
maximal regularity in parabolic equations, control theory, etc.
It grew up from the two fundamental papers \cite{MC86,CDMY96}. For detailed information
we refer the reader to \cite{Haa06}, \cite{KuW04}, to the survey papers 
\cite{AArendt-04-Survey}, \cite{LeMerdy-SurveymMaximalRegularity}, 
\cite{LeMerdy-SurveySquareFunctions}
and \cite{Weis-06-Survey}, and to the references therein. 

We will also establish an analogue of Theorem \ref{Th main sectorial Lp 0} for Ritt operators.
This class of operators can be regarded as the discrete analogue 
of the class of bounded analytic semigroups. Ritt operators have 
a natural and fruitful notion of $H^\infty$ functional calculus 
with respect to Stolz domains $B_\gamma$, 
see Section~\ref{sec:background} below and \cite{Mer12}. We will
show that a Ritt operator $T\colon L^p(\Omega)\to L^p(\Omega)$ has a bounded 
$H^{\infty}(B_\gamma)$ functional calculus 
for some $0<\gamma < \frac{\pi}{2}$ (if and) only if there exist a measure space 
$\Omega'$, a contractive and positive Ritt operator $R\colon L^p(\Omega')\to L^p(\Omega')$,
and two bounded operators $J \co L^p(\Omega) \to L^p(\Omega')$ and $Q \co L^p(\Omega') \to L^p(\Omega)$ 
such that $T^n=Q R^n J$ for all integer $n\geq 0$.

Theorem \ref{Th main sectorial Lp 0} and its discrete version above are
proved in Section~\ref{sec:main_results}. We also give analogous results 
when $L^p$-spaces
are replaced by other classes of Banach spaces, such as  UMD spaces, noncommutative
$L^p$-spaces, or quotients of closed subspaces of $L^p$-spaces.

Section~\ref{sec:dilations}
is devoted to closely related but different dilation results. 
Let $(T_t)_{t\geq 0}$ be a bounded analytic semigroup on a 
Banach space $X$, with negative generator $A$, and let $1<p<\infty$. 
Under mild conditions on $X$ we show that if  
$A$ admits a bounded $H^{\infty}(\Sigma_\theta)$ functional calculus 
for some angle $0<\theta < \frac{\pi}{2}$, then there exist a measure space
$\Omega'$, a $C_0$-group $(U_t)_{t\in\R}$ of isometries on $L^p(\Omega';X)$
and two bounded operators $J \co X \to L^p(\Omega';X)$ and $Q \co L^p(\Omega';X) \to X$ 
such that $T_t = QU_tJ$ for all $t\geq 0$. This result improves a well-known dilation 
result of Fr\"ohlich-Weis for operators with a bounded $H^\infty$ functional
calculus \cite{FroWei06}. In a slightly different framework, the Fr\"ohlich-Weis theorem yields 
a dilation $T_t = QU_t J$ where $(U_t)_{t\in\R}$
is a {\it bounded} $C_0$-group. Passing from a bounded group to an isometric
one makes a crucial difference for the applications that we 
develop in Section~\ref{sec:main_results}.

This led us to the question whether a bounded $C_0$-group can be dilated 
into an isometric one. Section~\ref{sec:representations} 
is devoted to this issue, in the more general framework
of amenable group representations. In the case of $L^p$-spaces, we show the following result.

\begin{thm}\label{DD}
Let $G$ be an amenable locally compact group, let $\Omega$ be a measure 
space, let $1<p<\infty$ and
let $\pi \co G \to \mathcal{B}(L^p(\Omega))$ be a bounded strongly 
continuous representation.
Then there exist a measure space $\Omega'$, a strongly
continuous isometric representation $\pi' \co G \to \mathcal{B}(L^p(\Omega'))$,
and two bounded 
operators $J \co L^p(\Omega) \to L^p(\Omega')$ and 
$Q \co L^p(\Omega') \to L^p(\Omega)$ such that 
$$
\pi(t) = Q\pi'(t)J, \qquad \text{for all } t \in G.
$$
\end{thm}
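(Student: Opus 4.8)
The plan is to dilate $\pi$ into a genuinely isometric ``regular-type'' representation of $G$ on an $L^p$-space built over $G\times\Omega$, using a Følner net to manufacture approximate embeddings and an ultrapower to pass to the limit; amenability enters only through the Følner condition, and reflexivity of $L^p$ (that is, $1<p<\infty$) only to build the conditional-expectation-type map $Q$. Write $C=\sup_{t\in G}\norm{\pi(t)}<\infty$. Since $\pi(t)^{-1}=\pi(t^{-1})$ we also have $\norm{\pi(t)^{-1}}\le C$, hence $C^{-1}\norm{x}\le\norm{\pi(t)x}\le C\norm{x}$ for all $t\in G$ and $x\in L^p(\Omega)$. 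Let $\lambda$ be a \emph{right} Haar measure on $G$ and set $Y=L^p(G\times\Omega,\lambda\ot\mu)$, equipped with the representation $\rho\co G\to\mathcal B(Y)$, $(\rho(s)f)(t,\omega)=f(ts,\omega)$. By right-invariance of $\lambda$ each $\rho(s)$ is an invertible isometry of $Y$, and in fact a \emph{lattice} automorphism (composition with a measure-preserving point transformation); moreover $\rho$ is a group homomorphism. Keeping track of the Banach lattice structure will be crucial at the very end.

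Since $G$ is amenable I would fix a Følner net $(F_i)_{i\in I}$: compact sets with $0<\lambda(F_i)<\infty$ and $\lambda(F_i\triangle F_is^{-1})/\lambda(F_i)\to 0$ for every fixed $s\in G$. For each $i$ define $J_i\co L^p(\Omega)\to Y$ and $Q_i\co Y\to L^p(\Omega)$ by
\beq
(J_ix)(t,\omega)=\lambda(F_i)^{-1/p}\,\mathbf 1_{F_i}(t)\,(\pi(t)x)(\omega),
\eeq
\beq
Q_if=\lambda(F_i)^{-1/p'}\int_{F_i}\pi(t)^{-1}f(t,\cdot)\,d\lambda(t)\qquad\bigl(p'=\tfrac{p}{p-1}\bigr).
\eeq
From the two-sided bound on $\norm{\pi(t)x}$ one reads off $C^{-1}\norm{x}_p\le\norm{J_ix}_Y\le C\norm{x}_p$, Hölder's inequality gives $\norm{Q_i}\le C$, and a one-line computation yields $Q_iJ_i=\Id_{L^p(\Omega)}$. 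The only input from amenability is the elementary estimate
\beq\label{defectineq}
\bnorm{\rho(s)J_ix-J_i(\pi(s)x)}_Y^p\ \le\ \frac{\lambda(F_i\triangle F_is^{-1})}{\lambda(F_i)}\,C^{2p}\norm{x}_p^p,\qquad s\in G,
\eeq
which follows from $(\rho(s)J_ix)(t,\omega)=(J_ix)(ts,\omega)=\mathbf 1_{F_is^{-1}}(t)\,\lambda(F_i)^{-1/p}(\pi(t)\pi(s)x)(\omega)$; the right-hand side of \eqref{defectineq} tends to $0$ along the net.

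Now let $\mathcal U$ be an ultrafilter on $I$ refining the order filter and let $\widehat Y=(Y)_{\mathcal U}$ be the ultrapower. As an ultrapower of an $L^p$-space, $\widehat Y$ is again an $L^p$-space (as a Banach lattice), and $\widehat\rho(s):=(\rho(s))_{\mathcal U}$ is an isometric representation of $G$ on $\widehat Y$ by lattice automorphisms. Put $J=(J_i)_{\mathcal U}\co L^p(\Omega)\to\widehat Y$, so that $C^{-1}\norm{x}\le\norm{Jx}\le C\norm{x}$. Since $1<p<\infty$, $L^p(\Omega)$ is reflexive and every bounded net in it has a weak limit along $\mathcal U$; declaring $Q\bigl((f_i)_{\mathcal U}\bigr)$ to be the weak limit of $(Q_if_i)_{i\in I}$ along $\mathcal U$ defines a bounded operator $Q\co\widehat Y\to L^p(\Omega)$ (well-defined because $\norm{Q_if_i-Q_ig_i}_p\le C\norm{f_i-g_i}_Y$, and of norm $\le C$), with $QJ=\Id_{L^p(\Omega)}$. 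By \eqref{defectineq}, $\norm{\widehat\rho(s)Jx-J(\pi(s)x)}_{\widehat Y}=\lim_{\mathcal U}\norm{\rho(s)J_ix-J_i(\pi(s)x)}_Y=0$, that is, $\widehat\rho(s)J=J\pi(s)$ for every $s\in G$; consequently $Q\widehat\rho(s)J=QJ\pi(s)=\pi(s)$.

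What remains — and this is the genuinely delicate step — is that $\widehat\rho$ need not be strongly continuous on all of $\widehat Y$, so one must cut down to the space $\widehat Y_c$ of vectors $v$ for which $s\mapsto\widehat\rho(s)v$ is norm-continuous at $e$. This $\widehat Y_c$ is a closed $\widehat\rho$-invariant subspace, and on it $\widehat\rho$ is strongly continuous (continuity at $e$ plus the group law forces continuity everywhere, using that the $\widehat\rho(s)$ are isometries); each $\widehat\rho(s)|_{\widehat Y_c}$ is then a surjective isometry. The point of having arranged $\rho$ to act by \emph{lattice} automorphisms is that $\bnorm{\,|\widehat\rho(s)v|-|v|\,}=\bnorm{\widehat\rho(s)|v|-|v|}\le\norm{\widehat\rho(s)v-v}$, so $\widehat Y_c$ is stable under $v\mapsto|v|$, i.e.\ it is a closed \emph{sublattice} of the $L^p$-space $\widehat Y$; being a closed sublattice of an $L^p$-space it is an abstract $L^p$-space, hence by Kakutani's representation theorem lattice isometric to $L^p(\Omega')$ for some measure space $\Omega'$. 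Finally, $\widehat\rho(s)Jx=J(\pi(s)x)$ and strong continuity of $\pi$ give $\norm{\widehat\rho(s)Jx-Jx}\le C\norm{\pi(s)x-x}\to 0$ as $s\to e$, so $J$ takes values in $\widehat Y_c$. Transporting through $\widehat Y_c\cong L^p(\Omega')$, the representation $\pi':=\widehat\rho(\cdot)|_{\widehat Y_c}$, together with $J$ and $Q|_{\widehat Y_c}$, has all the required properties. I expect the main obstacle to be exactly this last paragraph: one has to be certain that passing to the continuous vectors stays inside the class of $L^p$-spaces, which is the reason for using the right Haar measure (so that right translation is isometric with no modular-function correction, keeping $\widehat\rho(s)J=J\pi(s)$ exact) and for carrying the lattice structure all the way through.
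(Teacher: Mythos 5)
Your argument is correct, and up to the reduction to continuity vectors it runs along the same lines as the paper's proof of this theorem (Theorem \ref{thm:factorization_amenable_groups}, Corollary \ref{cor:factorization_continuous_representation} and Corollary \ref{DDLp}): Følner averaging maps $j_{E_i},q_{E_i}$, the right regular action on an ultrapower of $L^p(G;X)$, weak limits along the ultrafilter to define $Q$ (the paper lands in $X^{**}$ and uses reflexivity of $L^p$ only at the end, exactly as you do directly), and restriction to the closed invariant subspace of vectors on which the dilated representation is strongly continuous. Where you genuinely diverge is the delicate last step, namely recognizing that subspace as an $L^p$-space. The paper does this via a de Leeuw--Glicksberg argument: uniform convexity of the ultrapower and of its dual yields a positive contractive projection onto the continuity vectors \cite{deLGli65}, and then one invokes the fact that the range of a positive contractive projection on an $L^p$-space is again positively isometrically an $L^p$-space \cite{Ran01}. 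You instead exploit that the dilating group acts by lattice isometries: the inequality $\bnorm{\,\vert\widehat\rho(s)v\vert-\vert v\vert\,}\leq\norm{\widehat\rho(s)v-v}$ shows that the continuity subspace is stable under the modulus, hence a closed sublattice, hence an abstract $L^p$-space, and the Kakutani--Bohnenblust representation theorem finishes the proof. This is more elementary and self-contained in the $L^p$ setting, avoiding both the projection machinery and the projection-range theorem; the only point you should add is that, the operators $\widehat\rho(s)$ being real, the continuity subspace is also invariant under the canonical conjugation, so it is the complexification of a closed real sublattice, which is the form in which the representation theorem applies. What the paper's route buys in exchange is extra information and generality: the continuity subspace is shown to be $1$-complemented by a positive contractive projection, and the argument works in the broader uniformly convex, Riesz-normed setting of Corollary \ref{cor:factorization_continuous_representation}, where no analogue of ``a closed sublattice of $L^p$ is $L^p$'' is available. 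Your exact intertwining $\widehat\rho(s)J=J\pi(s)$ is also a slight sharpening of the paper's identity $Q\widehat{\pi}(s)J=\pi(s)$, and it delivers the continuity of $s\mapsto\widehat\rho(s)Jx$ for free.
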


We also show versions of this theorem when $L^p(\Omega)$ is replaced by a more general 
Banach space, and similarity results are established. Indeed Theorem \ref{DD}
can be regarded as an $L^p$-analogue of  Dixmier's unitarization theorem
which says that any bounded strongly continuous representation of an amenable
group on some Hilbert space is similar to a unitary one.

Section~\ref{sec:background} is a preliminary one. It contains an introduction
to sectorial operators, Ritt operators 
and their associated holomorphic functional calculus, 
and provides some background on compressions
and on ordered spaces. In Section~\ref{sec:fractional_powers}, 
we investigate `fractional powers' of power bounded operators, following \cite{Dun11}. These
operators are crucial for the Ritt versions of our results.

\bigskip
We conclude this introduction with a few conventions to be used in this paper. Unless stated
otherwise, the Banach spaces we consider are complex.
Given any Banach spaces $X,Y$, we let $\mathcal{B}(X,Y)$ denote the space of all bounded
operators from $X$ into $Y$. We denote this space by
$\mathcal{B}(X)$ when $Y=X$. We write $\Id_X$ for the identity 
operator on $X$, or simply 
$\Id$ if there is no ambiguity on $X$.

Let $1<p<\infty$. A Banach space $X$ is called an $SQ_p$-space 
(for subspace of quotient of $L^p$) provided that
there exist a measure space $\Omega$ and two closed subspaces
$F\subset E\subset L^p(\Omega)$ such that $X$ is isometrically 
isomorphic to the quotient space $E/F$.

For any non empty open set $\Sigma\subset \C$, we let $H^\infty(\Sigma)$ denote the algebra of all bounded 
analytic functions $\varphi\colon\Sigma\to \C$, equipped with the supremum norm
$$
\norm{\varphi}_{H^\infty(\Sigma)} = \sup\bigl\{
\vert\varphi(z)\vert\, :\, z\in\Sigma\bigr\}.
$$

For any $a\in\C$ and any $r>0$, we let $D(a,r)\subset \C$ denote the open disc 
with center $a$ and radius $r$. We simply denote by $\D=D(0,1)$ the open
unit disc centered at $0$. 

Finally given any set $\Omega$ and a subset
$\Lambda\subset\Omega$, we let $\chi_\Lambda\colon \Omega\to \{0,1\}$
denote the characteristic function of $\Lambda$.


\section{Preliminaries}\label{sec:background}


We start this section with some classical definitions and results
on sectorial operators 
and their associated functional calculus. The construction and basic 
properties below go back to \cite{MC86} and \cite{CDMY96}, see also \cite{KW01}, 
\cite{KuW04} and \cite{Haa06} for complements. We refer to \cite{Gold} or \cite{Pazy} for 
some background on $C_0$-semigroups and the subclass of bounded analytic semigroups.

Let $X$ be a Banach space. Let $A\colon D(A) \to X$ be a closed linear operator
with dense domain $D(A) \subset X$ and let $\sigma(A)$ denote its spectrum. 
Recall the definition 
(\ref{sector}).
We say that $A$ is a sectorial operator of type 
$\mu \in (0,\pi)$ if $\sigma(A) \subset \overline{\Sigma_\mu}$ 
and for any $\nu \in(\mu,\pi)$, the set 
\begin{equation}
\label{2Sectorial}
\Big\{zR(z,A)\, :\, z\in\C \setminus \overline{\Sigma_\nu}\Big\}
\end{equation}
is bounded in $\mathcal{B}(X)$, with $R(z,A)=(z\Id-A)^{-1}$ denoting the resolvent operator. 
It is well-known that an operator $A$ is a sectorial operator of type $<\frac{\pi}{2}$ 
if and only if $-A$ generates a bounded analytic semigroup. 
In this case, this semigroup is denoted by $(e^{-tA})_{t\geq 0}$.

For any $\theta \in (0,\pi)$, let $H^\infty_0(\Sigma_\theta)$ denote the algebra of all bounded 
holomorphic functions $\varphi\co \Sigma_\theta \to \C$ for which there exist 
two positive real numbers $s,K>0$ such that
$$
\vert \varphi(z)\vert\leq\, K\,\frac{\vert z\vert^s}{1+\vert z\vert^{2s}}\,,
\qquad \text{for all } z \in \Sigma_\theta.
$$

Let $0<\mu<\theta<\pi$ and let $\varphi \in H^\infty_0(\Sigma_\theta)$. 
Then for any $\nu \in(\mu,\theta)$, we set
\begin{equation}
\label{2CauchySec}
\varphi(A)\,=\,\frac{1}{2\pi i}\,\int_{\partial\Sigma_\nu} \varphi(z) R(z,A)\, dz\,,
\end{equation}
where the boundary $\partial\Sigma_\nu$ is oriented counterclockwise. 
The sectoriality condition ensures that this integral is absolutely convergent 
and defines a bounded operator on $X$. Moreover by Cauchy's theorem, 
this definition does not depend on the choice of $\nu$. Further the resulting mapping 
\begin{equation*}
\begin{array}{cccc}
   &  H^\infty_0(\Sigma_\theta)   &  \longrightarrow   &  \mathcal{B}(X)  \\
   &   \varphi &  \longmapsto       &  \varphi(A)  \\
\end{array}
\end{equation*}
is an algebra homomorphism which is consistent with the usual functional calculus 
for rational functions.

We say that $A$ admits a bounded $H^\infty(\Sigma_\theta)$ functional 
calculus if the latter homomorphism is bounded, that is, there exists a constant $K \geq 0$ such that 
$$
\bnorm{\varphi(A)}_{X \to X} \leq K\norm{\varphi}_{H^\infty(\Sigma_\theta)},\qquad \text{for all } 
\varphi \in
H^\infty_0(\Sigma_\theta).
$$
If $A$ has dense range and admits a bounded $H^\infty(\Sigma_\theta)$ 
functional calculus, then the above homomorphism naturally extends to 
a bounded homomorphism $\varphi\mapsto \varphi(A)$ from the whole space
$H^\infty(\Sigma_\theta)$ into $\mathcal{B}(X)$.

For a sectorial operator $A$, the fractional powers  
$A^{\alpha}$ can be defined for any $\alpha>0$. We refer to \cite[Chapter 3]{Haa06}, 
\cite{KuW04} and \cite{CarSan01} for various definitions of 
these operators and their basic properties. 
The spectral mapping theorem for fractional powers states that  
\begin{equation} 
\label{efracsp} 
\sigma(A^{\alpha}) = \big\{ z^{\alpha} \co z \in \sigma(A) \big\},  
\;\;\; 
\alpha>0.   
\end{equation} 
If, in addition, the operator $A$ is bounded, then for any $\alpha>0$ the operator $A^{\alpha}$ is bounded
as well. 
We will frequently use the following effect of 
taking fractional powers on sectoriality and functional calculus.

\begin{lemma}
\label{fraction}
Let $A$ be sectorial of type $0< \mu <\pi$ and let $\alpha\in (0,\frac{\pi}{\mu})$. Then $A^\alpha$
is sectorial of type $\alpha\mu$. If further $A$ admits a bounded $H^\infty(\Sigma_\theta)$ 
functional calculus for some $\theta\in (\mu,\pi)$ and $\alpha<\frac{\pi}{\theta}$,
then $A^\alpha$
admits a bounded $H^\infty(\Sigma_{\alpha\theta})$ functional calculus.
\end{lemma}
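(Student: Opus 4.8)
The plan is to treat the two assertions separately, the common device being the behaviour of the holomorphic functional calculus under the substitution $z\mapsto z^{\alpha}$; this substitution is available precisely because the angle hypotheses force $z\mapsto z^{\alpha}$ to be a conformal bijection of the sectors in play.

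\emph{Sectoriality.} I would start from the spectral mapping theorem (\ref{efracsp}): since $\sigma(A)\subset\overline{\Sigma_{\mu}}$ and $\alpha\mu<\pi$, we get $\sigma(A^{\alpha})=\{z^{\alpha}\co z\in\sigma(A)\}\subset\{z^{\alpha}\co z\in\overline{\Sigma_{\mu}}\}=\overline{\Sigma_{\alpha\mu}}$. It then remains to bound $\{wR(w,A^{\alpha})\co w\notin\overline{\Sigma_{\nu}}\}$ in $\mathcal{B}(X)$ for each $\nu\in(\alpha\mu,\pi)$, which is the classical resolvent estimate for fractional powers; I would quote it from \cite[Chapter~3]{Haa06} or \cite{CarSan01}. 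If a self-contained argument is wanted, one first reduces to $0<\alpha\le 1$ by writing $A^{\alpha}=(A^{\alpha/n})^{n}$ for a large integer $n$ and using that the $n$-th power of a sectorial operator of type $\omega$ with $n\omega<\pi$ is sectorial of type $n\omega$ (a consequence of the resolvent identity together with the partial fraction decomposition of $(w-z^{n})^{-1}$ over the $n$-th roots of $w$), and for $\alpha\le 1$ one estimates $R(w,A^{\alpha})$ directly from the subordination formula, which presents it as an absolutely convergent integral of $(s+A)^{-1}$ against an explicit scalar kernel.

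\emph{Functional calculus.} Now assume in addition that $A$ has a bounded $H^{\infty}(\Sigma_{\theta})$ functional calculus with constant $K$ and that $\alpha\theta<\pi$. The point is that $z\mapsto z^{\alpha}$ maps $\Sigma_{\theta}$ \emph{onto} $\Sigma_{\alpha\theta}$. Hence, given $\varphi\in H^{\infty}_{0}(\Sigma_{\alpha\theta})$, I would set $\psi(z):=\varphi(z^{\alpha})$: this is holomorphic on $\Sigma_{\theta}$ with $\norm{\psi}_{H^{\infty}(\Sigma_{\theta})}=\norm{\varphi}_{H^{\infty}(\Sigma_{\alpha\theta})}$, and it inherits the decay defining $H^{\infty}_{0}$, since a bound $\vert\varphi(w)\vert\le K'\vert w\vert^{s}(1+\vert w\vert^{2s})^{-1}$ gives $\vert\psi(z)\vert\le K'\vert z\vert^{\alpha s}(1+\vert z\vert^{2\alpha s})^{-1}$; so $\psi\in H^{\infty}_{0}(\Sigma_{\theta})$. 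By the composition rule for the holomorphic functional calculus one has $\varphi(A^{\alpha})=\psi(A)$ — concretely this is the identity obtained by substituting $w=z^{\alpha}$ in the Cauchy integral (\ref{2CauchySec}) defining $\varphi(A^{\alpha})$ (taken over $\partial\Sigma_{\nu}$ with $\alpha\mu<\nu<\alpha\theta$) and recognizing the outcome as the Cauchy integral defining $\psi(A)$ (over $\partial\Sigma_{\nu/\alpha}$), and it is proved in \cite[Chapters~2 and~3]{Haa06}. Therefore $\norm{\varphi(A^{\alpha})}_{X\to X}=\norm{\psi(A)}_{X\to X}\le K\norm{\psi}_{H^{\infty}(\Sigma_{\theta})}=K\norm{\varphi}_{H^{\infty}(\Sigma_{\alpha\theta})}$ for every $\varphi\in H^{\infty}_{0}(\Sigma_{\alpha\theta})$, which (by the convention recalled in the excerpt) is exactly the statement that $A^{\alpha}$ admits a bounded $H^{\infty}(\Sigma_{\alpha\theta})$ functional calculus.

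The main obstacle is the justification of the two classical ingredients just invoked — the uniform resolvent bound for $A^{\alpha}$ and the composition rule $\varphi(A^{\alpha})=(\varphi\circ(\cdot)^{\alpha})(A)$: both amount to carrying out the change of variable $w=z^{\alpha}$ on unbounded contours, and their validity rests on $z\mapsto z^{\alpha}$ being a conformal bijection between the relevant sectors, which is exactly where the hypotheses $\alpha<\pi/\mu$ (respectively $\alpha<\pi/\theta$) are used. Everything else is bookkeeping, so in the write-up I would quote these two facts from \cite{Haa06} and \cite{CarSan01} and spell out in detail only the elementary verification that $\psi\in H^{\infty}_{0}(\Sigma_{\theta})$ with $\norm{\psi}_{H^{\infty}(\Sigma_{\theta})}=\norm{\varphi}_{H^{\infty}(\Sigma_{\alpha\theta})}$.
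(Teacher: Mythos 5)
Your proposal is correct, and it coincides with what the paper itself does: the paper states Lemma \ref{fraction} without proof, relying on the classical theory of fractional powers in \cite[Chapter~3]{Haa06}, \cite{KuW04} and \cite{CarSan01}, and your argument (spectral mapping plus the standard resolvent estimate for $A^{\alpha}$, then the composition rule $\varphi(A^{\alpha})=(\varphi\circ(\cdot)^{\alpha})(A)$ with the elementary check that $\psi=\varphi\circ(\cdot)^{\alpha}$ lies in $H^{\infty}_{0}(\Sigma_{\theta})$ with the same sup norm) is exactly the standard route those references take. No gaps worth flagging.
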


We now turn to Ritt operators, the second key class of operators 
considered in this paper. We describe 
their holomorphic functional calculus and present some of their main features. There is now
a vast literature on this topic in which 
details and complements can be found, see in particular
\cite{ArhLeM11}, \cite{Arh11}, \cite{Blu01}, \cite{Blu01b}, 
\cite{Mer12},  
\cite{Lyu99}, \cite{NagZem99}, \cite{Nev93}, 
\cite{Vit05} and the references therein.

An operator $T \in \mathcal{B}(X)$ is called a 
Ritt operator if the two sets
\begin{equation}
\label{ensembles de Ritt} \big\{T^n\,:\, n\geq
0\big\}\qquad\hbox{and}\qquad \big\{n(T^n-T^{n-1}) \,:\, n\geq
1\big\}
\end{equation}
are bounded. One can show that this is equivalent to the spectral inclusion
\begin{equation}\label{spectral inclusion}
\sigma(T)\subset \ovl{\mathbb{D}}
\end{equation}
and the norm-boundedness of the set
\begin{equation}\label{ensemble de Ritt}
\big\{(z-1)R(z,T)\,:\,\vert z\vert>1\big\}.
\end{equation}

The boundedness of (\ref{ensemble de Ritt}) 
implies the existence of a constant $K\geq 0$ such that
$$
|z-1|\bnorm{R(z,T)}_{X \to X}\leq K
$$ 
whenever
$\Re(z)>1$. This implies that $\Id-T$ is a sectorial operator of type $<\frac{\pi}{2}$. 
In fact, it is known that a bounded operator 
$T \co X \to X$ is a Ritt operator if and only if 
\begin{equation}
\label{2Ritt}
\sigma(T) \subset \mathbb{D} \cup \{1\}
\qquad \text{and} \qquad
\Id-T \text{ is a sectorial operator of type }< \tfrac{\pi}{2},
\end{equation}
see e.g. \cite[Prop. 2.2]{Blu01b}.

We introduce the Stolz domains $B_\gamma$ as sketched in Figure 1. 
Namely, for any angle $0 < \gamma < \frac{\pi}{2}$, we let $B_\gamma$ be the 
interior of the convex hull of $1$ and the disc $D(0,\sin\gamma)$. 
We will use the fact that for any $0 < \gamma < \frac{\pi}{2}$ 
there exists a positive constant $C_{\gamma}$ such that
\begin{equation}
\label{Stolz}
\frac{|1-z|}{1-|z|} \leq C_{\gamma},\qquad \text{for all } z \in B_\gamma.
\end{equation}

\begin{figure}[ht]
\vspace*{2ex}
\begin{center}
\includegraphics[scale=0.4]{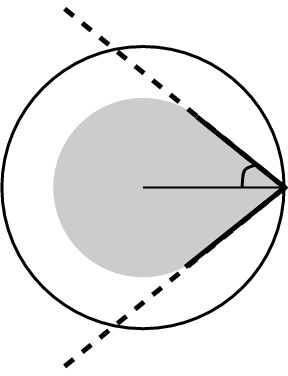}
\begin{picture}(0,0)
\put(-2,65){{\footnotesize $1$}}
\put(-68,65){{\footnotesize $0$}}
\put(-32,81){{\footnotesize $\gamma$}}
\put(-55,85){{\small $B_\gamma$}}
\end{picture}
\end{center}
\caption{\label{f1} Stolz domain}
\end{figure}

It is well-known that the spectrum of any Ritt operator $T$ is included in the 
closure of one of those Stolz domains. More precisely (see \cite[Lem. 2.1]{Mer12})
there exists $0<\beta<\frac{\pi}{2}$ such that 
\begin{equation}
\label{inclusion dans Stolz}
\sigma(T) \subset \ovl{B_\beta}
\end{equation}
and for any $\nu\in(\beta,\frac{\pi}{2})$, the set 
\begin{equation}
\label{2Ritt2}
\Big\{(z-1)R(z,T)\, :\, z\in\C \setminus \overline{B_\nu}\Big\}\qquad\hbox{is bounded}.
\end{equation}

The following is an analogue of the construction (\ref{2CauchySec}).
For any $\gamma\in (0,\frac{\pi}{2})$, let $H^\infty_0(B_\gamma)$ denote the space of all
bounded holomorphic functions $\phi\colon B_\gamma\to\C$ for which
there exist constants $s,K>0$ such that $\vert\phi(z)\vert\leq K\vert 1-z\vert^s$
for all $z\in B_\gamma$. Assume that $T$ satisfies (\ref{inclusion dans Stolz}) and
(\ref{2Ritt2}) for some $\beta<\gamma$. For $\phi\in H^\infty_0(B_\gamma)$ we set
\begin{equation}
\label{2CauchyRitt}
\phi(T)\,=\,\frac{1}{2\pi i}\,\int_{\partial B_\nu} \phi(z) R(z,T)\, dz\,,
\end{equation}
where $0<\beta<\nu<\gamma$ and the boundary $\partial B_\nu$ is oriented counterclockwise. 
This definition does not depend on $\nu$ and is consistent 
with the usual functional calculus
for polynomials. 

In accordance with ~\cite{Mer12}, we say that $T$ has a bounded $H^\infty(B_{\gamma})$ 
functional calculus if there exists a positive constant $K$ such that
$$
\bnorm{\phi(T)}_{X \to X} \leq
K\norm{\phi}_{H^\infty(B_{\gamma})}, \qquad\hbox{for all } 
\phi\in H^\infty_0(B_\gamma).
$$
Let $\mathcal{P}$ be the algebra of all 
complex polynomials.
We will use the following result (see \cite[Prop. 2.5]{Mer12}).

\begin{lemma}\label{pol}
A Ritt operator $T\co X\to X$ admits a bounded $H^\infty(B_{\gamma})$ 
functional calculus if (and only if) there exists a positive constant $K$ such that
$$
\bnorm{\phi(T)}_{X \to X} \leq
K\norm{\phi}_{H^\infty(B_{\gamma})}, \qquad\hbox{for all } \phi\in \mathcal{P}.
$$
\end{lemma}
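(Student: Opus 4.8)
\textbf{Proof plan for Lemma~\ref{pol}.} The plan is to show that any $\phi\in H^\infty_0(B_\gamma)$ can be approximated, in a sufficiently controlled way, by polynomials so that the bound for polynomials passes to the limit. First I would fix $\beta<\gamma$ as in \eqref{inclusion dans Stolz}--\eqref{2Ritt2}, pick an intermediate angle $\nu$ with $\beta<\nu<\gamma$, and recall from \eqref{2CauchyRitt} that $\phi(T)$ is given by the absolutely convergent Cauchy integral over $\partial B_\nu$. Since the only forward implication needs proof, I would assume the estimate $\norm{\phi(T)}\le K\norm{\phi}_{H^\infty(B_\gamma)}$ holds for all $\phi\in\mathcal P$ and aim to extend it to all $\phi\in H^\infty_0(B_\gamma)$ with the same constant (up to an irrelevant factor).

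The key device is a uniformly bounded approximating sequence. Concretely, for $\phi\in H^\infty_0(B_\gamma)$ and $r\in(0,1)$ set $\phi_r(z)=\phi(1-r(1-z))$; this is holomorphic on a neighborhood of $\ovl{B_\gamma}$ (the map $z\mapsto 1-r(1-z)$ shrinks $B_\gamma$ toward the point $1$ into a compact subset), and $\norm{\phi_r}_{H^\infty(B_\gamma)}\le\norm{\phi}_{H^\infty(B_\gamma)}$ because $1-r(1-z)\in B_\gamma$ whenever $z\in B_\gamma$. Being holomorphic past the closure of the Stolz domain, each $\phi_r$ is a uniform limit on $\ovl{B_\nu}$ of its Taylor polynomials $p_{r,n}$; one checks, again using that $1-r(1-z)$ maps into a fixed compact subset of $B_\gamma$, that $\norm{p_{r,n}}_{H^\infty(B_\gamma)}\le C\norm{\phi}_{H^\infty(B_\gamma)}$ for a constant $C$ independent of $r$ and $n$, once $n$ is large enough. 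By the hypothesis on polynomials, $\norm{p_{r,n}(T)}\le KC\norm{\phi}_{H^\infty(B_\gamma)}$. Now $p_{r,n}\to\phi_r$ uniformly on $\ovl{B_\nu}$ forces $p_{r,n}(T)\to\phi_r(T)$ in $\mathcal B(X)$ via the integral representation \eqref{2CauchyRitt} (the resolvent is bounded on the compact contour $\partial B_\nu$), so $\norm{\phi_r(T)}\le KC\norm{\phi}_{H^\infty(B_\gamma)}$. Finally let $r\uparrow 1$: since $\phi\in H^\infty_0(B_\gamma)$ satisfies $\abs{\phi(z)}\le K'\abs{1-z}^s$, one gets $\phi_r\to\phi$ uniformly on $\partial B_\nu$ with a quantitative rate (the decay near $z=1$ is what makes this work), hence $\phi_r(T)\to\phi(T)$ in operator norm, and $\norm{\phi(T)}\le KC\norm{\phi}_{H^\infty(B_\gamma)}$. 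Then Lemma~\ref{pol} follows, and the extension to all of $H^\infty(B_\gamma)$ is the standard density argument recalled after \eqref{2CauchyRitt}.

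The main obstacle is the uniformity of the polynomial norm bounds: one must verify that $\norm{p_{r,n}}_{H^\infty(B_\gamma)}$ stays controlled by $\norm{\phi}_{H^\infty(B_\gamma)}$ with a constant that does \emph{not} blow up as $r\to 1$ or $n\to\infty$. The natural way is to represent $p_{r,n}$ itself by a Cauchy integral of $\phi_r$ over a slightly larger contour inside $B_\gamma$ and estimate; the scaling toward $1$ is designed precisely so that the relevant contours and the point $1$ stay at controlled relative distance, which is where the Stolz-domain geometry \eqref{Stolz} enters. A cleaner alternative, which I would consider if the direct estimate gets messy, is to invoke the convexity/star-shapedness of $B_\gamma$ with respect to $1$ to realize the Taylor truncation as a Fejér-type (Cesàro) average of the function along rays, which automatically preserves sup-norm bounds; either route reduces the lemma to the polynomial hypothesis plus elementary complex analysis on $B_\gamma$.
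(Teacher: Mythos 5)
Your overall strategy (approximate $\phi\in H^\infty_0(B_\gamma)$ by polynomials whose $H^\infty(B_\gamma)$-norms are controlled by $\norm{\phi}_{H^\infty(B_\gamma)}$, then pass to the limit in the integral \eqref{2CauchyRitt}) is the right kind of argument; note that the paper itself offers no proof of this lemma but simply quotes \cite[Prop. 2.5]{Mer12}. However, your regularization step contains a genuine error. The map $m_r(z)=1-r(1-z)$ is the homothety of ratio $r$ centered at $1$, and $1$ is a \emph{boundary} point of $B_\gamma$ which is \emph{fixed} by $m_r$. Hence $m_r$ does not shrink $\overline{B_\gamma}$ (or $\overline{B_\nu}$) into a compact subset of $B_\gamma$: near $1$ the two straight boundary segments of $B_\gamma$ are mapped into themselves, and $m_r^{-1}(B_\gamma)=1+\tfrac1r(B_\gamma-1)$ is again a domain with a corner of half-angle $\gamma$ at $1$, not a neighbourhood of $1$. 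Consequently $\phi_r=\phi\circ m_r$ is in general not holomorphic on any neighbourhood of $\overline{B_\gamma}$; for instance $\phi(z)=(1-z)^{1/2}\in H^\infty_0(B_\gamma)$ gives $\phi_r=r^{1/2}\phi$, which is no more regular at $1$ than $\phi$ itself. This breaks the next two steps: since $1\in\overline{B_\nu}$, there is no reason for Taylor polynomials of $\phi_r$ (about any center) to converge uniformly on $\overline{B_\nu}$, and the uniform bound $\norm{p_{r,n}}_{H^\infty(B_\gamma)}\le C\norm{\phi}_{H^\infty(B_\gamma)}$ — which you yourself identify as the crux — has no justification. Your two suggested repairs run into the same obstruction at the corner: no contour surrounding $\overline{B_\nu}$ can stay inside the holomorphy domain of $\phi_r$ near $1$, and there is no globally defined Taylor expansion of $\phi$ on a disc containing $\overline{B_\nu}$ whose Fej\'er means one could use. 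A further (secondary) inaccuracy is that \eqref{2CauchyRitt} does not represent $p_{r,n}(T)$ at all, since polynomials do not lie in $H^\infty_0(B_\gamma)$ and the resolvent is singular at $1\in\partial B_\nu$; the convergence $p_{r,n}(T)\to\phi_r(T)$ would have to go through a Riesz--Dunford integral on a contour inside the common holomorphy domain.

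The defect is the choice of center: dilating toward an \emph{interior} point does work. Setting $\phi_r(z)=\phi(rz)$, one has $r\overline{B_\gamma}\subset B_\gamma$ (because $0\in B_\gamma$ and $B_\gamma$ is convex and bounded), so $\phi_r$ is holomorphic on the neighbourhood $\tfrac1r B_\gamma$ of $\overline{B_\gamma}$ and $\norm{\phi_r}_{H^\infty(B_\gamma)}\le\norm{\phi}_{H^\infty(B_\gamma)}$; Runge's theorem (the complement of $\overline{B_\gamma}$ is connected) then yields polynomials converging to $\phi_r$ uniformly on $\overline{B_\gamma}$, which does control their $H^\infty(B_\gamma)$-norms, and their images of $T$ converge to $\phi_r(T)$ through the Riesz--Dunford calculus on a contour strictly between $\overline{B_\gamma}$ and $\tfrac1r B_\gamma$. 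One must then still prove $\phi_r(T)\to\phi(T)$ as $r\to1$, by a splitting/dominated-convergence argument on $\partial B_\nu$ combining the decay $\vert\phi(z)\vert\le K'\vert1-z\vert^{s}$ with the resolvent estimate \eqref{2Ritt2} (splitting the contour where $\vert1-z\vert\approx1-r$). So the lemma is not reducible to ``elementary complex analysis'' as painlessly as your plan suggests; as written, the argument has a gap at its central step, and the honest alternatives are either to carry out the corrected approximation above or to cite \cite[Prop. 2.5]{Mer12}, as the paper does.
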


The following result proved in ~\cite[Proposition 4.1]{Mer12} 
allows one to transfer known results from the theory of functional calculus for sectorial operators
to the context of Ritt operators. Recall property (\ref{2Ritt}).

\begin{prop}
\label{Prop LM4.1}
Let $T \co X \to X$ be a Ritt operator on a Banach space $X$. Then the following are equivalent.
\begin{itemize}
\item [(i)] $T$ admits a bounded $H^{\infty}(B_\gamma)$ functional calculus for some $\gamma \in (0,\frac{\pi}{2})$.
\item [(ii)] $\Id-T$ admits a bounded $H^{\infty}(\Sigma_\theta)$ functional calculus for some $\theta \in (0, \frac{\pi}{2})$.
\end{itemize}
\end{prop}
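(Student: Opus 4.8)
The plan is to use the relation between a Ritt operator $T$ and the sectorial operator $A=\Id-T$ recorded in (\ref{2Ritt}), together with a conformal change of variables between the Stolz domains $B_\gamma$ and the sectors $\Sigma_\theta$. The key geometric observation is that the map $z\mapsto 1-z$ sends $1$ to $0$ and, for $\gamma<\frac{\pi}{2}$, maps $B_\gamma$ into some sector $\Sigma_\theta$ with $\theta<\frac{\pi}{2}$; conversely, a neighbourhood of $0$ inside a sector $\Sigma_\theta$ (with $\theta<\frac{\pi}{2}$) is mapped by $w\mapsto 1-w$ into some Stolz domain $B_\gamma$. Since $\sigma(A)$ is bounded (it lies in $1-\ovl{B_\beta}$, a bounded set), the functional calculus for $A$ only ``sees'' the behaviour of holomorphic functions near $0$ on the sector, and this is exactly the region that corresponds to the Stolz domain under the substitution. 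So the substitution $\phi\leftrightarrow\varphi$ given by $\phi(z)=\varphi(1-z)$ sets up an essentially isometric correspondence between bounded holomorphic functions on the relevant domains, and it intertwines the two functional calculi: $\phi(T)=\varphi(A)$ because $R(z,T)=-R(1-z,-A)=R(1-z,A)\cdot(-1)$... more precisely one checks that the Cauchy integral (\ref{2CauchyRitt}) for $\phi$ over $\partial B_\nu$ becomes, after the substitution $w=1-z$, the Cauchy integral (\ref{2CauchySec}) for $\varphi$ over the image contour, which can be deformed to $\partial\Sigma_\nu$ for a suitable $\nu$ using Cauchy's theorem and the sectoriality estimates.

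First I would fix the geometry: given $0<\beta<\gamma<\frac{\pi}{2}$ as in (\ref{inclusion dans Stolz})--(\ref{2Ritt2}), show that there are angles $\mu<\theta<\frac{\pi}{2}$ such that $1-B_\gamma\subset\Sigma_\theta$ and $1-\ovl{B_\beta}\subset\ovl{\Sigma_\mu}$, and conversely that for any $\mu<\theta<\frac{\pi}{2}$ there is a small $r>0$ and angles $\beta<\gamma<\frac{\pi}{2}$ with $1-(\Sigma_\theta\cap D(0,r))\supset B_\gamma$, while $A=\Id-T$ being sectorial of type $<\frac{\pi}{2}$ with bounded spectrum means $\sigma(A)\subset\ovl{\Sigma_\mu}\cap\ovl{D(0,\rho)}$ for some $\mu<\frac{\pi}{2}$ and some $\rho>0$. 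Second, I would verify that $\phi\mapsto\varphi:=\phi(1-\cdot)$ maps $H^\infty_0(B_\gamma)$ into $H^\infty_0(\Sigma_\theta)$ (the decay $|\phi(z)|\leq K|1-z|^s$ becomes $|\varphi(w)|\leq K|w|^s$, and boundedness near $\infty$ in $\Sigma_\theta$ is automatic since $\varphi$ extends holomorphically and boundedly to a full neighbourhood of the origin in the plane away from the ray), preserving sup-norms up to the geometry, and conversely; and that this correspondence intertwines the Cauchy integrals, i.e. $\phi(T)=\varphi(\Id-T)$. Third, I would transfer the functional calculus bound: if (ii) holds with constant $K$, then for $\phi\in H^\infty_0(B_\gamma)$ we get $\|\phi(T)\|=\|\varphi(A)\|\leq K\|\varphi\|_{H^\infty(\Sigma_\theta)}\leq K'\|\phi\|_{H^\infty(B_\gamma)}$, giving (i); and symmetrically for the converse, after possibly shrinking $\gamma$ or $\theta$. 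Since polynomials in $T$ correspond, under $z\mapsto 1-z$, to polynomials in $A$ vanishing appropriately, one can also invoke Lemma~\ref{pol} to reduce to polynomials if convenient.

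The main obstacle I anticipate is the bookkeeping around the non-bounded functions and the passage between $H^\infty_0$ and the full $H^\infty$: one must be careful that the $H^\infty_0$-decay conditions on the two sides really match under the substitution (the $B_\gamma$ side has decay only near the single point $1$, while the sector side has the two-sided decay near $0$ and near $\infty$ built into $H^\infty_0(\Sigma_\theta)$), and that when one extends to the full $H^\infty$ calculus the density-of-range hypothesis is not needed here because we only compare the $H^\infty_0$ estimates. A second, more technical point is the contour deformation: after substituting $w=1-z$, the image of $\partial B_\nu$ is a curve through the origin that must be deformed to the sector boundary $\partial\Sigma_{\nu'}$, and one needs the sectoriality of $A$ plus the decay of $\varphi$ to justify that the ``arc at infinity'' and the ``arc near $0$'' contributions vanish; this is routine but must be stated. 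Everything else is elementary complex geometry and the definitions already recorded in Section~\ref{sec:background}.
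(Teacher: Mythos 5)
The paper does not actually prove Proposition \ref{Prop LM4.1}: it quotes it from \cite[Proposition 4.1]{Mer12}, so your proposal has to be judged against that argument. Your direction (i)$\Rightarrow$(ii) is essentially correct and matches the expected route: choosing $\theta<\frac{\pi}{2}$ larger than both the sectoriality angle of $A=\Id-T$ and an angle $\gamma$ for which (i) holds, every $\varphi\in H^\infty_0(\Sigma_\theta)$ restricts via $\phi=\varphi(1-\cdot)$ to an element of $H^\infty_0(B_\gamma)$ with $\norm{\phi}_{H^\infty(B_\gamma)}\le\norm{\varphi}_{H^\infty(\Sigma_\theta)}$ (here one uses $1-B_\gamma\subset\Sigma_\theta$), and the identity $\varphi(A)=\phi(T)$ follows from the substitution $\lambda=1-z$, $R(1-z,A)=-R(z,T)$, in (\ref{2CauchySec}) together with the contour deformation you flag; the decay of $\varphi$ at $0$ and the resolvent estimates (\ref{2Ritt2}) do make the connecting arcs negligible, so this half is fine modulo routine bookkeeping.

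The genuine gap is in (ii)$\Rightarrow$(i). The substitution $z\mapsto 1-z$ does not set up an ``essentially isometric correspondence'' between $H^\infty(B_\gamma)$ and $H^\infty(\Sigma_\theta)$: the set $1-B_\gamma$ is a bounded proper subset of the unbounded sector $\Sigma_\theta$, so a function $\phi\in H^\infty_0(B_\gamma)$ only produces $\varphi=\phi(1-\cdot)$ on that bounded piece, and in general it admits no bounded holomorphic extension to all of $\Sigma_\theta$; hence $\varphi$ is simply not an admissible input for hypothesis (ii). Your parenthetical justification (``boundedness near $\infty$ in $\Sigma_\theta$ is automatic since $\varphi$ extends holomorphically and boundedly to a full neighbourhood of the origin'') is incorrect: being defined and bounded near $0$ says nothing about the rest of the sector. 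Even after reducing to polynomials by Lemma \ref{pol}, so that $\varphi$ is entire, the relevant norms are not comparable: $\phi_n(z)=z^n(1-z)$ satisfies $\sup_{B_\gamma}\vert\phi_n\vert\le 2$ for every $n$, whereas $\varphi_n(w)=w(1-w)^n$ is unbounded on $\Sigma_\theta$ and has supremum of order $(1+R)^n$ even on the truncated region $\Sigma_\theta\cap D(0,R)$ for any fixed $R>2$. Thus the heuristic that the calculus of $A$ ``only sees the function near $0$ because $\sigma(A)$ is bounded'' is exactly the statement that needs proof, and it does not follow from the change of variables or the maximum principle; this direction is where the cited proof in \cite{Mer12} does its real work, combining the $H^\infty(\Sigma_\theta)$ bound with additional estimates rather than a pullback of functions. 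As it stands, your proposal proves only (i)$\Rightarrow$(ii).
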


We now turn to background on compressions.
Let $X$ be a Banach space and let $T \co X \to X$ be a bounded operator. 
Let $F \subset E\subset X$ be closed subspaces. Assume that $E$ and $F$ are $T$-invariant 
i.e. we have $T(E) \subset E$ and $T(F) \subset F$. Then $T$ determines a bounded 
operator $\widetilde{T} \co E/F \to E/F$, called a compression of $T$. It is
characterized by the equality 
$$
\widetilde{T}q=qT|_{E},
$$ 
where $q \co E \to E/F$ denotes the canonical quotient map.
This clearly implies that for any complex polynomial $\phi$, we 
have $\phi(\widetilde{T})q=q\phi(T)|_{E}$. 
Note that a compression of a compression of $T$ is again 
a compression of $T$.

In the following, an algebraic semigroup 
$\mathscr{S}$ is a set supplied with an associative binary composition 
with identity $e$. A representation $\pi \co \mathscr{S} \to \mathcal{B}(X)$ 
is a map satisfying $\pi(st)=\pi(s)\pi(t)$ for any $s,t\in\mathscr{S}$ and $\pi(e) = 
\Id_X$. We say that a closed subspace $E\subset X$ is $\pi$-invariant
if it is $\pi(t)$-invariant for any $t\in\mathscr{S}$. If $F\subset E\subset X$ are
two $\pi$-invariant closed subspaces, we let $\widetilde{\pi}\colon
\mathscr{S}\to\mathcal{B}(E/F)$ be defined by $\widetilde{\pi}(t)=\widetilde{\pi(t)}$.
It is plain that $\widetilde{\pi}$ is a representation of $\mathscr{S}$. This
will be called a compressed representation in the sequel.

We will need the following proposition, which is a variant of \cite[Proposition 4.2]{Pis01}. 
See \cite[Proposition 5.5.6]{Fac15} for a proof.

\begin{prop}\label{compression}
Let $\mathscr{S}$ be an algebraic semigroup and let $\pi \co \mathscr{S} 
\to \mathcal{B}(X)$ and $\rho \co \mathscr{S} \to \mathcal{B}(Z)$ be 
representations of $\mathscr{S}$ on two Banach spaces 
$X$ and $Z$. Assume that $J \co X \to Z$ and $Q \co Z \to X$ are two bounded operators such that
$$
\pi(t)=Q\rho(t)J,\qquad \text{for all } t \in \mathscr{S}.
$$
Then $\pi$ is similar to a compression of $\rho$, that is, 
there exist $\rho$-invariant 
closed subspaces $F \subset E \subset Z$ and an isomorphism $S \co X \to E/F$ such that 
$\norm{S}_{}\norm{S^{-1}}_{} \leq \norm{Q}_{}\norm{J}_{}$ and  the compressed 
representation $\widetilde{\rho} \co  \mathscr{S}\to \mathcal{B}(E/F)$ satisfies
$$
\pi(t)=S^{-1}\widetilde{\rho}(t)S,  \qquad \text{for all } t \in \mathscr{S}.
$$
\end{prop}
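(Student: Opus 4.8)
The plan is to take the factorization $\pi(t) = Q\rho(t)J$ and turn it into a genuine compression by working inside $Z$. First I would renormalize: replacing $J$ by $\lambda J$ and $Q$ by $\lambda^{-1}Q$ for a suitable $\lambda>0$, I may assume $\norm{J} = \norm{Q}^{1/2}\norm{J}^{1/2}$, i.e.\ $\norm{J}=\norm{Q}$, while $\norm{Q}\norm{J}$ is unchanged; this is the cosmetic step that makes the final bound $\norm{S}\norm{S^{-1}}\leq\norm{Q}\norm{J}$ come out cleanly rather than with $\norm{Q}^2$ or $\norm{J}^2$ factors. Next, the natural candidate for the invariant subspaces: since $\pi(t) = Q\rho(t)J$ for \emph{all} $t\in\mathscr S$ including $t=e$, we get $QJ = \Id_X$, so $P := JQ \in \mathcal B(Z)$ is an idempotent with $\ran P \cong X$. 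However $P$ need not commute with the $\rho(t)$, so $\ran P$ is not $\rho$-invariant in general; this is exactly why we need the quotient-by-subspace flexibility and cannot simply take a complemented subspace.

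The key construction is the following. Let $E \subset Z$ be the smallest closed $\rho$-invariant subspace containing $\ran J$, and let $F \subset E$ be the closed $\rho$-invariant subspace $F := E \cap \Ker\bigl(Q\,\widetilde{\cdot}\bigr)$, more precisely
$$
F \,=\, \bigl\{\, z \in E \,:\, Q\rho(t)z = 0 \text{ for all } t \in \mathscr S \,\bigr\}.
$$
One checks directly that $F$ is closed (intersection of kernels of bounded operators) and $\rho$-invariant: if $z\in F$ and $s\in\mathscr S$, then for all $t$ one has $Q\rho(t)\rho(s)z = Q\rho(ts)z = 0$, so $\rho(s)z\in F$. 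Now define $S \co X \to E/F$ by $S x = q(Jx)$, where $q\co E\to E/F$ is the quotient map. The plan is then to verify three things: (1) $S$ is bounded with $\norm{S}\leq\norm{J}$, which is immediate since $\norm{q}\leq 1$; (2) $S$ is bounded below, hence injective with closed range, by producing a left inverse $\widetilde Q \co E/F \to X$ with $\widetilde Q q = Q|_E$ — this is well defined precisely because $F\subset\Ker Q|_E$ (since $e\in\mathscr S$), it satisfies $\norm{\widetilde Q}\leq\norm{Q}$, and $\widetilde Q S x = \widetilde Q q(Jx) = QJx = x$, giving $\norm{S^{-1}}\leq\norm{Q}$ \emph{once} surjectivity is known; (3) $S$ is surjective, which is where the choice of $E$ as the $\rho$-cyclic subspace generated by $\ran J$ does the work: $E/F$ is spanned by the classes $q(\rho(t)Jx)$, and one shows $q(\rho(t)Jx) = \widetilde{\rho}(t) q(Jx) = \widetilde\rho(t)Sx \in \ran S$ using that $\ran S$ is $\widetilde\rho$-invariant, so $\ran S$ is a closed $\widetilde\rho$-invariant subspace containing a generating set, forcing $\ran S = E/F$.

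Finally I would check the intertwining identity $\pi(t) = S^{-1}\widetilde\rho(t)S$. Unwinding definitions, $S\pi(t)x = q\bigl(J\pi(t)x\bigr)$ while $\widetilde\rho(t)Sx = \widetilde\rho(t)q(Jx) = q\bigl(\rho(t)Jx\bigr)$, so the claim reduces to $J\pi(t)x - \rho(t)Jx \in F$. By definition of $F$, this means $Q\rho(s)\bigl(J\pi(t)x - \rho(t)Jx\bigr) = 0$ for all $s$, i.e.\ $\pi(s)\pi(t)x = Q\rho(st)Jx$, which is just $\pi(st) = Q\rho(st)J$ — true by hypothesis. Combining (1)--(3), $S$ is an isomorphism with $\norm{S}\norm{S^{-1}}\leq\norm{J}\norm{Q}$, and the compressed representation $\widetilde\rho$ satisfies $\widetilde\rho(t) = S\pi(t)S^{-1}$, as required. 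The main obstacle, and the one point deserving care, is step~(3): one must be sure that the algebraic span of $\{\rho(t)Jx : t\in\mathscr S,\ x\in X\}$ is dense in $E$ (true by definition of $E$) \emph{and} that taking quotient by $F$ and then closures is compatible, i.e.\ that $\ran S$, being closed and $\widetilde\rho$-invariant, really does capture all of $E/F$; the renormalization in the first step and the bookkeeping with $e\in\mathscr S$ are routine by comparison.
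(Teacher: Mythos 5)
Your construction is exactly the classical one behind the proof the paper cites from \cite{Pis01} and \cite{Fac15}: take $E$ to be the closed $\rho$-invariant subspace generated by $\Ran J$, $F=\{z\in E:\ Q\rho(t)z=0\ \forall t\}$, and $S=q\circ J$, with $\widetilde{Q}$ induced by $Q$ giving $\norm{S}\norm{S^{-1}}\leq\norm{J}\norm{Q}$ (so the initial renormalization is superfluous), and all verifications you indicate go through. The only adjustment needed is one of ordering: prove the relation $J\pi(t)x-\rho(t)Jx\in F$ (your final paragraph, which uses only the hypothesis) \emph{before} step (3), since the $\widetilde{\rho}$-invariance of $\Ran S$ used there is precisely this intertwining identity.
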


We end this section with some background on complexifications and orders.
A complex Banach space $X$ is called the complexification
of a real Banach space $X_\R$ if $X_\R$ is a closed real subspace of
$X$, we have a real direct sum decomposition $X=X_\R\oplus iX_\R$
and for any $x_1,x_2$ in $X_\R$, 
\begin{equation}\label{2Complex}
\max\{\norm{x_1},\norm{x_2}\}\,\leq \norm{x_1 + ix_2}.
\end{equation}
In this situation, we say that an operator $T\in\mathcal{B}(X)$ is real 
if $T$ maps $X_\R$ into itself.

We say that $X$ is an ordered Banach space when it is 
the complexification
of a real Banach space $X_\R$ equipped with a partial
order $\geq$ compatible with the vector space structure, 
and such that the positive cone
$\Cc=\{x\in X_\R\, :\, x\geq 0\}$ is closed and proper, that is, 
$\Cc\cap(-\Cc)=\{0\}$.
An operator $T\in\mathcal{B}(X)$ on an ordered Banach space
is called positive if $T(x)\geq 0$ for any $x\geq 0$.

An ordered Banach space $X$ is called absolutely monotone if for any
$x,y\in X_\R$, we have
\begin{equation}\label{2Normal}
-y\leq x\leq y\,\Longrightarrow\, \norm{x}\leq\norm{y}.
\end{equation} 
Next $X$ is called a Riesz-normed space if moreover for any $x\in
X_\R$ and any $\epsilon>0$, there exists $y\in X_\R$ such that
$-y\leq x\leq y$ and $\norm{y}\leq (1+\epsilon)\norm{x}$. 
In this case any $x\in X_\R$ is the difference of two
positive elements. Indeed if $-y\leq x\leq y$, then $y\geq 0$,
$y-x\geq 0$ and $x = y-(y-x)$. Consequently, any 
positive $T\in\mathcal{B}(X)$ is real. We refer
e.g. to \cite{BR84} for more on these notions.

The class of Riesz-normed spaces includes Banach lattices, 
for which we refer to \cite{LinTza79}, 
and noncommutative $L^p$-spaces,
for which we refer to \cite{PX03}.


\section{Fractional Powers for Power Bounded Operators}\label{sec:fractional_powers}


Let $X$ be a Banach space. A bounded operator $T \co X \to X $ is 
power bounded if the set $\{T^n \ : \ n \geq 0\}$ is bounded in $\mathcal{B}(X)$. 
The spectrum of such an operator is contained in $\overline{\mathbb{D}}$ and
$\Id-T$ is sectorial of type $\frac{\pi}{2}$, see e.g. 
\cite[Lemma 3.1]{HT10}. Thus we may define
\begin{equation}\label{Talpha}
T_{\alpha} \coloneqq \Id - (\Id - T)^{\alpha}
\end{equation}
for any $\alpha>0$. By abuse of language, these operators $T_\alpha$ will be called
the {\it fractional powers of $T$}.

A power bounded operator is not necessarily a Ritt operator. 
However Dungey proved in \cite[Theorem 4.3]{Dun11} that $T_\alpha$ is 
a Ritt operator whenever $\alpha<1$. We give an elementary proof and complements 
below.

\begin{thm}
\label{thm:fractional_powers}
Let $X$ be a Banach space, let $T\co X \to X$ be a power bounded operator and let $\alpha \in (0,1)$. Then we have:
\begin{itemize}
\item[(a)] $T_\alpha$ is a Ritt operator.
\item[(b)] If $T$ is contractive, then $T_{\alpha}$ is contractive as well.
\item[(c)] If $X$ is an ordered Banach space and $T$ is positive, then $T_{\alpha}$ is positive as well.
\end{itemize}
\end{thm}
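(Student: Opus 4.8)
The plan is to realize all three fractional power operators $T_\alpha = \Id - (\Id - T)^\alpha$ through a single integral representation of the function $z \mapsto 1 - (1-z)^\alpha$ on the disc, and then read off each of (a), (b), (c) from that formula. The key analytic input is the classical subordination formula: for $0 < \alpha < 1$ and $\Re\lambda \geq 0$,
\[
\lambda^\alpha = \frac{\alpha}{\Gamma(1-\alpha)} \int_0^\infty \bigl(1 - e^{-s\lambda}\bigr)\, \frac{ds}{s^{1+\alpha}},
\]
which, applied with $\lambda = \Id - T$ (legitimate since $\Id - T$ is sectorial of type $\frac\pi2$, hence $-( \Id-T)$ generates a bounded semigroup $e^{-s(\Id-T)} = e^{-s}e^{sT}$), yields
\[
T_\alpha = \Id - (\Id - T)^\alpha = \frac{\alpha}{\Gamma(1-\alpha)} \int_0^\infty \bigl(e^{-s(\Id - T)} - \Id + \Id - \Id \bigr)\,\frac{ds}{s^{1+\alpha}} \;=\; c_\alpha \int_0^\infty \bigl(e^{-s}e^{sT} - \Id\bigr)\frac{ds}{s^{1+\alpha}} + (\text{boundary terms}),
\]
more precisely $T_\alpha = c_\alpha \int_0^\infty \bigl(\Id - e^{-s(\Id-T)}\bigr)s^{-1-\alpha}\,ds$ with $c_\alpha = \alpha/\Gamma(1-\alpha) > 0$, the integral converging in $\mathcal{B}(X)$ because $\norm{\Id - e^{-s(\Id-T)}} = O(s)$ as $s\to 0^+$ (from sectoriality / the semigroup being bounded analytic) and is $O(1)$ as $s\to\infty$.

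For part (a), rather than estimating this integral directly I would invoke Lemma~\ref{fraction}: $\Id - T$ is sectorial of type $\mu = \frac\pi2$, and $\alpha \in (0,1) \subset (0, \frac{\pi}{\mu}) = (0,2)$, so $(\Id-T)^\alpha$ is sectorial of type $\alpha\mu = \alpha\frac\pi2 < \frac\pi2$. Then $T_\alpha = \Id - (\Id-T)^\alpha$ has spectrum $\sigma(T_\alpha) = \{1 - w^\alpha : w \in \sigma(\Id - T)\}$ by the spectral mapping theorem for fractional powers \eqref{efracsp}, which lies in $\mathbb{D}\cup\{1\}$ since $\sigma(\Id - T) \subset \overline{\Sigma_{\pi/2}}$ and $w = 0$ is the only point mapped to $1$. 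Together with the sectoriality of $\Id - T_\alpha = (\Id-T)^\alpha$ of type $< \frac\pi2$, the characterization \eqref{2Ritt} gives that $T_\alpha$ is a Ritt operator. For part (b), from the integral formula and $c_\alpha\int_0^\infty s^{-1-\alpha}\,ds$ being divergent one must be slightly careful; instead I would write $\Id - T_\alpha = (\Id-T)^\alpha = c_\alpha \int_0^\infty \bigl(\Id - e^{-s(\Id-T)}\bigr)s^{-1-\alpha}\,ds$ and note that, normalizing so that $c_\alpha \int_0^\infty (1 - e^{-s})s^{-1-\alpha}\,ds = 1$, one gets $T_\alpha$ as an average: $T_\alpha = c_\alpha\int_0^\infty \bigl(e^{-s(\Id-T)} - (1-\text{wt})\Id\bigr)\cdots$ — cleaner is to use the probabilistic subordination directly, $(1-z)^\alpha = \int_0^\infty e^{-s(1-z)}\,\mu_\alpha(ds)$ is false as stated, so instead use that $1 - (1-z)^\alpha = \int_0^\infty (1 - e^{-s(1-z)})\nu_\alpha(ds)$ with $\nu_\alpha$ a positive measure of total mass $+\infty$; the right tool is the one-sided stable subordinator: there is a probability density $f_\alpha \geq 0$ on $(0,\infty)$ with $e^{-\lambda^\alpha} = \int_0^\infty e^{-s\lambda} f_\alpha(s)\,ds$, hence applying the bounded Borel functional calculus, $e^{-(\Id-T_\alpha)} = e^{-(\Id-T)^\alpha}$ is a contraction; that alone does not give $\norm{T_\alpha}\leq 1$, so I would instead use the formula $T_\alpha = \int_0^1 (\text{resolvent-type expression})$. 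The honest clean route for (b) and (c) is: write $(\Id - T)^\alpha$ via the binomial series is not norm-convergent, so use Balakrishnan/Bernstein representation
\[
T_\alpha \;=\; \frac{\sin(\pi\alpha)}{\pi}\int_0^\infty t^{\alpha-1}\bigl(\Id - T\bigr)\bigl(t\,\Id + \Id - T\bigr)^{-1}\,dt \;=\; \frac{\sin(\pi\alpha)}{\pi}\int_0^\infty t^{\alpha-1}\,\bigl((1+t)\Id - T\bigr)^{-1}(\Id - T)\,dt,
\]
and then expand $\bigl((1+t)\Id - T\bigr)^{-1} = \sum_{n\geq 0}(1+t)^{-n-1}T^n$: substituting and computing the scalar integrals $\frac{\sin\pi\alpha}{\pi}\int_0^\infty t^{\alpha-1}(1+t)^{-n-1}\,dt = \binom{\alpha}{?}$ yields $T_\alpha = \sum_{n\geq 1} a_n T^n$ with \emph{explicit} coefficients $a_n = -\binom{\alpha}{n}(-1)^n \geq 0$ for $n\geq 1$ and $\sum_{n\geq 1} a_n = 1$ (since these are exactly the Taylor coefficients of $1 - (1-z)^\alpha$ at $z=0$, all nonnegative for $\alpha\in(0,1)$, summing to $1 - (1-1)^\alpha = 1$). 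This series converges absolutely in $\mathcal{B}(X)$ when $T$ is power bounded? — no, only $|a_n| \sim C n^{-1-\alpha}$ which is summable, so $\sum a_n\norm{T^n} \leq (\sup_n\norm{T^n})\sum a_n < \infty$, hence the series \emph{does} converge absolutely in operator norm. Given that, parts (b) and (c) are immediate: if $\norm{T}\leq 1$ then $\norm{T_\alpha}\leq \sum_{n\geq 1}a_n\norm{T}^n \leq \sum a_n = 1$; if $T$ is positive then each $T^n$ is positive, $a_n \geq 0$, so $T_\alpha = \sum a_n T^n$ is a norm-limit of positive operators, hence positive (the positive cone is closed).

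So the actual plan is: (i) establish $T_\alpha = \sum_{n\geq 1} a_n T^n$ with $a_n = (-1)^{n+1}\binom{\alpha}{n} \geq 0$, $\sum_{n\geq 1} a_n = 1$, the series converging absolutely in $\mathcal{B}(X)$ because $\sum_n a_n < \infty$ and $T$ is power bounded — this requires matching the Balakrishnan integral for $(\Id-T)^\alpha$ against the resolvent Neumann series and identifying the Beta-integral, \emph{or} more slickly observing that both $\sum a_n T^n$ and $\Id - (\Id - T)^\alpha$ are bounded operators agreeing on a dense set via the holomorphic functional calculus applied to the function $g(z) = 1 - (1-z)^\alpha$, which is bounded and holomorphic on $\mathbb{D}$ with absolutely summable Taylor coefficients; (ii) deduce (b) and (c) from nonnegativity of the $a_n$ and $\sum a_n = 1$ as above; (iii) deduce (a) from Lemma~\ref{fraction}, the spectral mapping theorem \eqref{efracsp}, and the characterization \eqref{2Ritt} of Ritt operators. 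The main obstacle is step (i): justifying rigorously that the Balakrishnan/Cauchy integral definition of $(\Id - T)^\alpha$ coincides with the power series $\sum_{n\geq 0}\binom{\alpha}{n}(-1)^n T^n$ for a merely power-bounded (not necessarily Ritt) operator — one must be careful that $\Id - T$ is sectorial but need not be bounded-invertible, so $0$ may lie in its spectrum, and the power series $\sum \binom\alpha n (-1)^n T^n$ converges in norm only because $\sum|\binom\alpha n|<\infty$, which fortunately holds precisely for $\alpha\in(0,1)$; reconciling this with the fractional-power calculus (which a priori is defined via the sectorial functional calculus, not as a power series) is exactly the content of "an elementary proof" promised before the theorem, and is where the real work lies.
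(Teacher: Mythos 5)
Your final plan is exactly the paper's argument: expand $g_\alpha(z)=1-(1-z)^\alpha=\sum_{k\geq 1}a_{\alpha,k}z^k$ with $a_{\alpha,k}=(-1)^{k-1}\binom{\alpha}{k}>0$ and $\sum_k a_{\alpha,k}=1$, identify $T_\alpha=\sum_k a_{\alpha,k}T^k$ (absolutely convergent by power boundedness), read off (b) and (c) from the nonnegativity of the coefficients and the closedness of the positive cone, and obtain (a) from the spectral mapping theorem \eqref{efracsp}, Lemma~\ref{fraction} and the characterization \eqref{2Ritt}. The identification step you single out as ``where the real work lies'' is exactly what the paper settles by citing the compatibility of the holomorphic functional calculus with fractional powers, \cite[Proposition 3.2]{HT10} — i.e.\ your ``slicker'' option is the paper's route — whereas your Balakrishnan/Neumann-series alternative would need extra care, since the crude bound $\sum_{n\geq 0}(1+t)^{-n-1}=1/t$ makes the naive Fubini estimate behave like $t^{\alpha-2}$ near $t=0$, which is not integrable.
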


\begin{proof} We let $\alpha \in (0,1)$ and we set $g_\alpha(z) = 1-(1-z)^\alpha$ for any 
$z\in\overline{\D}$ (with the convention that $0^\alpha=0$). We have
\begin{equation*}
g_\alpha(z) = \sum_{k=1}^{\infty} a_{\alpha,k} z^k \qquad \text{for all } z \in \overline{\mathbb{D}},
\end{equation*}
where 
\begin{equation}\label{Dun1}
a_{\alpha,k} = (-1)^{k-1} \binom{\alpha}{k}>0
\qquad\hbox{and}\qquad
\sum_{k=1}^{\infty} a_{\alpha,k} = 1. 
\end{equation}

Using the compatibility of the holomorphic functional calculus with fractional powers
(see~\cite[Proposition 3.2]{HT10}), this implies that
\begin{equation}\label{Dun2}
T_{\alpha} = \Id - (\Id - T)^{\alpha} = \sum_{k=1}^{\infty} a_{\alpha,k} T^k.
\end{equation}
Next observe that for any $z \in \ovl{\mathbb{D}}$, we have
$$
\left|g_\alpha(z)\right|
=\left|\sum_{k=1}^{\infty} a_{\alpha,k} z^k\right|
\leq \sum_{k=1}^{\infty} a_{\alpha,k} |z|^k
\leq \sum_{k=1}^{\infty} a_{\alpha,k}= 1.
$$
This shows that $g_\alpha(\ovl{\mathbb{D}}) \subset \ovl{\mathbb{D}}$.
Moreover, it is easy to see that the above inequality is an equality (if and) 
only if $z=1$. Hence we deduce that
\begin{equation}
\label{inclusion galpha de D}
g_\alpha(\ovl{\mathbb{D}}) \subset \mathbb{D}\cup \{1\}.
\end{equation}
By (\ref{efracsp}), we have
$\sigma(T_{\alpha})=g_\alpha(\sigma(T))$. 
Moreover since $T$ is power bounded, we have $\sigma(T) \subset \ovl{\mathbb{D}}$. 
We deduce that 
$\sigma(T_{\alpha})\subset g_\alpha(\ovl{\mathbb{D}})$ and hence
$$
\sigma(T_\alpha) \subset \mathbb{D}\cup \{1\}.
$$
Moreover the fractional power $(\Id-T)^\alpha$ is sectorial of type $\frac{\alpha\pi}{2}<\frac{\pi}{2}$,
by Lemma \ref{fraction}. Applying (\ref{2Ritt}), we conclude that $T_\alpha$ is a Ritt operator.

This completes (a). The proofs of (b) and (c)  immediately follow from (\ref{Dun1}) and (\ref{Dun2}).
\end{proof}

We now consider the behaviour of $T_\alpha$ when $T$ is a Ritt operator and $\alpha>1$.

\begin{prop}
\label{prop:fractional_hinfty}
Let $T\co X \to X$ be a Ritt operator on a Banach space $X$.
\begin{itemize}
\item [(a)] For sufficiently small $\alpha >1$, $T_{\alpha}$ is a Ritt operator.
\item [(b)] Assume that $T$ admits a bounded $H^{\infty}(B_{\gamma})$ functional calculus 
for some $\gamma \in (0, \frac{\pi}{2})$. Then for sufficiently small $\alpha >1$, $T_{\alpha}$
admits a  bounded $H^{\infty}(B_{\gamma'})$ functional calculus 
for some $\gamma' \in (0, \frac{\pi}{2})$.
\end{itemize}
\end{prop}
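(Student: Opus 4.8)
The plan is to reduce everything to the sectorial picture via the substitution $\Id-T_\alpha=(\Id-T)^\alpha$ and then invoke Lemma \ref{fraction} together with Proposition \ref{Prop LM4.1}. Write $A=\Id-T$. Since $T$ is a Ritt operator, by (\ref{2Ritt}) the operator $A$ is sectorial of some type $\mu<\frac{\pi}{2}$, and $\sigma(T)\subset\mathbb{D}\cup\{1\}$. For (a) I would choose $\alpha>1$ small enough that $\alpha\mu<\frac{\pi}{2}$; then Lemma \ref{fraction} gives that $A^\alpha=(\Id-T)^\alpha$ is sectorial of type $\alpha\mu<\frac{\pi}{2}$. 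To apply the characterization (\ref{2Ritt}) in the reverse direction, it remains to check the spectral condition $\sigma(T_\alpha)\subset\mathbb{D}\cup\{1\}$. By the spectral mapping theorem for fractional powers (\ref{efracsp}), $\sigma(A^\alpha)=\{z^\alpha : z\in\sigma(A)\}$, and since $\sigma(A)=1-\sigma(T)\subset 1-(\mathbb{D}\cup\{1\})$, one has $0\in\sigma(A^\alpha)$ only if $0\in\sigma(A)$, i.e. $1\in\sigma(T)$, and all other spectral points of $A$ lie in the open right half plane near $0$, so their $\alpha$-th powers stay off $\{0\}$. Translating back, $\sigma(T_\alpha)=1-\sigma(A^\alpha)\subset\mathbb{D}\cup\{1\}$. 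Hence $T_\alpha$ is a Ritt operator by (\ref{2Ritt}).

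For (b), assume $T$ has a bounded $H^\infty(B_\gamma)$ functional calculus for some $\gamma\in(0,\frac{\pi}{2})$. By Proposition \ref{Prop LM4.1}, $A=\Id-T$ admits a bounded $H^\infty(\Sigma_\theta)$ functional calculus for some $\theta\in(0,\frac{\pi}{2})$. Now choose $\alpha>1$ small enough that simultaneously $\alpha\mu<\frac{\pi}{2}$ (so part (a) applies and $T_\alpha$ is Ritt) and $\alpha\theta<\frac{\pi}{2}$. The second part of Lemma \ref{fraction} then yields that $A^\alpha=\Id-T_\alpha$ admits a bounded $H^\infty(\Sigma_{\alpha\theta})$ functional calculus, with $\alpha\theta<\frac{\pi}{2}$. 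Applying Proposition \ref{Prop LM4.1} once more, this time in the direction (ii) $\Rightarrow$ (i), we conclude that $T_\alpha$ admits a bounded $H^\infty(B_{\gamma'})$ functional calculus for some $\gamma'\in(0,\frac{\pi}{2})$.

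The only genuinely delicate point is making sure that all the hypotheses of Lemma \ref{fraction} and Proposition \ref{Prop LM4.1} are met after taking the fractional power: one needs $\alpha$ strictly greater than $1$ but small enough that both $\alpha\mu$ and $\alpha\theta$ remain below $\frac{\pi}{2}$, which is possible precisely because $\mu,\theta<\frac{\pi}{2}$ are strict; and one needs to know that the fractional power $(\Id-T)^\alpha$ appearing in the definition (\ref{Talpha}) really is the sectorial fractional power to which Lemma \ref{fraction} applies, so that $\Id-T_\alpha=(\Id-T)^\alpha$ holds on the nose. I also need to verify the spectral inclusion $\sigma(T_\alpha)\subset\mathbb{D}\cup\{1\}$ carefully from (\ref{efracsp}), checking that the branch of $z\mapsto z^\alpha$ used is the one holomorphic on the sector containing $\sigma(\Id-T)$ and that it maps $\overline{\Sigma_\mu}\setminus\{0\}$ into the open sector $\Sigma_{\alpha\mu}$, so that only the point $1\in\sigma(T)$ (corresponding to $0\in\sigma(\Id-T)$) can map to the boundary point $1$ of $\mathbb{D}\cup\{1\}$. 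Everything else is a routine application of the machinery recalled in Section \ref{sec:background}.
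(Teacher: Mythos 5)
Your overall route is the paper's own: pass to $A=\Id-T$, use Lemma \ref{fraction} for sectoriality and for the $H^\infty$ calculus of $A^\alpha=\Id-T_\alpha$, and go back and forth with Proposition \ref{Prop LM4.1}; your part (b) is essentially identical to the paper's argument and is fine. The gap is in the spectral inclusion $\sigma(T_\alpha)\subset\mathbb{D}\cup\{1\}$ in part (a). What you actually justify (and what your concluding ``careful check'' proposes to verify) is that no point of $\sigma(T)\setminus\{1\}$ is sent to the boundary point $1$, i.e.\ that $(1-z)^\alpha\neq 0$, together with the fact that $(1-z)^\alpha$ stays in a sector $\Sigma_{\alpha\mu}$. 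Neither is the relevant condition. What must be shown is that every $z\in\sigma(T)\setminus\{1\}$ satisfies $\vert 1-(1-z)^\alpha\vert<1$, equivalently $\vert 1-z\vert^\alpha<2\cos\bigl(\alpha\arg(1-z)\bigr)$, and this modulus estimate does not follow from the argument of $(1-z)^\alpha$ being controlled or from its being nonzero. For $\alpha>1$ the map $g_\alpha(z)=1-(1-z)^\alpha$ genuinely pushes points of $\mathbb{D}$ near the unit circle outside the closed disc: for instance if $z\in\mathbb{D}$ is close to $i$, then $\vert 1-(1-z)^\alpha\vert>1$ for $\alpha>1$ close to $1$, and points approaching $1$ tangentially behave in the same way. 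So the qualitative inclusion $\sigma(T)\subset\mathbb{D}\cup\{1\}$ alone cannot give the conclusion, and your phrase ``all other spectral points of $A$ lie in the open right half plane near $0$'' is unjustified as well (points of $\sigma(A)\setminus\{0\}$ may a priori have modulus up to $2$).

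This is precisely where the paper's proof invokes the Ritt-specific Stolz inclusion (\ref{inclusion dans Stolz}), $\sigma(T)\subset\ovl{B_\beta}$ for some $\beta<\frac{\pi}{2}$, and then checks (via (\ref{efracsp})) that $g_\alpha(\ovl{B_\beta})\subset\mathbb{D}\cup\{1\}$ once $\alpha>1$ is small enough depending on $\beta$. A repair along your lines is available, but the modulus estimate must appear explicitly: away from any neighbourhood of $1$, $\sigma(T)$ is a compact subset of $\mathbb{D}$ and $g_\alpha\to\mathrm{id}$ uniformly there as $\alpha\to 1^+$, which keeps that part inside $\mathbb{D}$; near $1$, sectoriality gives $\vert\arg(1-z)\vert\leq\mu$ for $z\in\sigma(T)$, so with $\alpha\mu<\frac{\pi}{2}$ one has $2\cos\bigl(\alpha\arg(1-z)\bigr)\geq 2\cos(\alpha\mu)>0$ while $\vert 1-z\vert^\alpha$ is small, which yields $\vert 1-(1-z)^\alpha\vert<1$ there. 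Without this (or the paper's appeal to (\ref{inclusion dans Stolz})), the application of (\ref{2Ritt}) in part (a) is not justified, and since part (b) uses part (a) to know that $T_\alpha$ is Ritt, the gap propagates.
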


\begin{proof}
Part (a) is already contained in~\cite[Theorem~1.3 (IV)]{Dun11}. 
For the sake of completeness, we recall the short argument. 
The operator $\Id-T$ is sectorial of type $<\frac{\pi}{2}$. Hence for $\alpha>1$ 
sufficiently close to 1, the operator $\Id-T_\alpha=(\Id-T)^\alpha$ 
is also sectorial of type $<\frac{\pi}{2}$, by Lemma  \ref{fraction}. Moreover, by (\ref{efracsp}) 
and (\ref{inclusion dans Stolz}), it is easy to see that
$$
\sigma(T_\alpha)=
\big\{1-(1-z)^\alpha \ :\ z\in \sigma(T) \big\} \subset \mathbb{D} \cup \{1\}
$$
for any $\alpha>1$ sufficiently close to 1. 
By (\ref{2Ritt}), we conclude that $T_{\alpha}$ is a Ritt operator.

To prove part (b), assume that
$T$ admits a bounded $H^{\infty}(B_{\gamma})$ functional calculus 
for some $\gamma \in (0, \frac{\pi}{2})$. It follows from Proposition~\ref{Prop LM4.1} 
that the operator $\Id-T$ admits a bounded $H^{\infty}(\Sigma_{\theta})$ 
functional calculus for some $\theta \in (0, \frac{\pi}{2})$. For sufficiently small $\alpha > 1$, it follows from Lemma \ref{fraction}
that $(\Id - T)^{\alpha}$ has a bounded $H^{\infty}(\Sigma_{\theta'})$ functional calculus for some 
$\theta'\in (0, \frac{\pi}{2})$. The converse implication of Proposition~\ref{Prop LM4.1} now implies the assertion.
\end{proof}

We now prove an $H^\infty$ functional calculus property on UMD Banach spaces. 
As is well-known, this class provides a natural setting for functional calculus and 
vector-valued harmonic analysis. We refer to \cite{Bur01} and the 
references therein for information. 
We simply note for further use that for any $p \in (1, \infty)$ and 
any measure space $\Omega$, the Bochner space $L^p(\Omega;X)$ is UMD 
if $X$ is UMD. Next, the UMD property is stable under passing to 
subspaces and quotients. In particular, $SQ_p$-spaces are UMD
for any $1<p<\infty$. Furthermore, UMD spaces are always reflexive
and the dual of any UMD space is UMD as well.

We say that an isomorphism $U \co X \to X$ on a Banach space $X$ is a power bounded isomorphism if
$$
\bigl\{U^n \ \co\ n \in \Z \bigr\}
$$ 
is bounded in $\mathcal{B}(X)$. In this case, $U_\alpha$ is a well-defined
Ritt operator for any $\alpha\in(0,1)$, by Theorem~\ref{thm:fractional_powers}.

\begin{thm}
\label{thm:fractional_power_hinfty}
Let $U$ be a power bounded isomorphism on a UMD Banach space $X$. Then for every $\alpha \in (0,1)$,
the operator $U_{\alpha}$ admits a bounded $H^{\infty}(B_{\gamma})$ functional calculus 
for some $\gamma \in (0,\frac{\pi}{2})$.
\end{thm}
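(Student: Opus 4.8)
The strategy is to pull the problem back, through the identity $\Id-U_\alpha=(\Id-U)^\alpha$ coming from~(\ref{Talpha}), to a sectorial $H^\infty$ estimate for the single operator $\Id-U$, and to prove that estimate by a transference argument exploiting the amenability of $\Z$. Recall that, $U$ being power bounded, $\Id-U$ is sectorial of type $\frac{\pi}{2}$ and, by Theorem~\ref{thm:fractional_powers}(a), $U_\alpha$ is a Ritt operator. Suppose for a moment that the following statement $(\ast)$ holds: \emph{$\Id-U$ admits a bounded $H^\infty(\Sigma_\omega)$ functional calculus for every $\omega\in(\frac{\pi}{2},\pi)$.} Given $\alpha\in(0,1)$, choose $\omega$ with $\frac{\pi}{2}<\omega<\min\{\pi,\frac{\pi}{2\alpha}\}$, which is possible since $\alpha<1$; then $\alpha<\frac{\pi}{\omega}$, so Lemma~\ref{fraction} applies and shows that $(\Id-U)^\alpha=\Id-U_\alpha$ admits a bounded $H^\infty(\Sigma_{\alpha\omega})$ functional calculus, with $\alpha\omega<\frac{\pi}{2}$. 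Since $U_\alpha$ is a Ritt operator, Proposition~\ref{Prop LM4.1} then produces a $\gamma\in(0,\frac{\pi}{2})$ for which $U_\alpha$ has a bounded $H^\infty(B_\gamma)$ functional calculus, which is the assertion. So everything reduces to $(\ast)$.

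To prove $(\ast)$ I would transfer. Since $U$ is a power bounded \emph{isomorphism}, $n\mapsto U^n$ is a bounded representation of the amenable group $\Z$ on $X$; set $c_U=\sup_{n\in\Z}\norm{U^n}$. Fix $\omega\in(\frac{\pi}{2},\pi)$ and $\varphi\in H^\infty_0(\Sigma_\omega)$, and let $m_\varphi\co\T\to\C$ be given by $m_\varphi(e^{i\theta})=\varphi(1-e^{i\theta})$. For $\theta\neq0$ one has $1-\cos\theta>0$, so $1-e^{i\theta}\in\Sigma_{\pi/2}\subset\Sigma_\omega$; hence $m_\varphi$ is smooth on $\T\setminus\{1\}$, extends continuously to $1$ with value $0$ (by the decay defining $H^\infty_0$), and $\abs{m_\varphi}\le\norm{\varphi}_{H^\infty(\Sigma_\omega)}$ everywhere. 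Combining the Cauchy estimates $\abs{\varphi^{(j)}(z)}\lesssim_{j}\norm{\varphi}_{H^\infty(\Sigma_\omega)}\,\mathrm{dist}(z,\partial\Sigma_\omega)^{-j}$ with the elementary bound $\mathrm{dist}(1-e^{i\theta},\partial\Sigma_\omega)\gtrsim_\omega\abs{1-e^{i\theta}}\asymp\abs{\theta}$ and the chain rule for higher-order derivatives, one gets the Mihlin-type bounds $\abs{\theta^k\frac{d^k}{d\theta^k}m_\varphi(e^{i\theta})}\lesssim_{k,\omega}\norm{\varphi}_{H^\infty(\Sigma_\omega)}$ for $k=0,1$. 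Thus $m_\varphi$ is a scalar Marcinkiewicz multiplier on $\T$ with norm $\lesssim_\omega\norm{\varphi}_{H^\infty(\Sigma_\omega)}$, and since $X$ is a UMD space the vector-valued Marcinkiewicz multiplier theorem shows $m_\varphi$ to be a Fourier multiplier on $L^p(\T;X)$ with norm $\lesssim_{\omega,p,X}\norm{\varphi}_{H^\infty(\Sigma_\omega)}$. By the Coifman--Weiss transference principle applied to the bounded $\Z$-representation $n\mapsto U^n$, this multiplier gives rise to a bounded operator on $X$ of norm at most $c_U^2$ times the above multiplier norm; since $w\mapsto\varphi(1-w)$ is holomorphic on $\D$, this operator involves only the powers $U^n$ with $n\ge0$ and coincides with $\varphi(\Id-U)$. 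Hence $\norm{\varphi(\Id-U)}_{\mathcal{B}(X)}\lesssim_{\omega,p,X}c_U^2\,\norm{\varphi}_{H^\infty(\Sigma_\omega)}$ for all $\varphi\in H^\infty_0(\Sigma_\omega)$, which is $(\ast)$.

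The angle bookkeeping in the reduction and the Cauchy/chain-rule estimates are routine. The step that needs genuine care --- and the main obstacle --- is the transference identification: one must check, first, that the Marcinkiewicz constant of $m_\varphi$ depends only on $\omega$ and $\norm{\varphi}_{H^\infty(\Sigma_\omega)}$, and not on the auxiliary exponents entering the definition of $H^\infty_0(\Sigma_\omega)$; and second, that the operator produced by transference really is $\varphi(\Id-U)$ in the sense of the Cauchy integral~(\ref{2CauchySec}). For the latter one argues first on the dense subclass of rational functions $\varphi(z)=z^k(a+z)^{-m}$ with $a>0$ and $m>k\ge1$: for these $w\mapsto\varphi(1-w)$ extends holomorphically across $\overline{\D}$, so both $\varphi(\Id-U)$ and the transferred operator are visibly the image of $w\mapsto\varphi(1-w)$ under the Riesz--Dunford functional calculus of $U$, and the general case then follows from the standard convergence lemma for the sectorial $H^\infty$-calculus.
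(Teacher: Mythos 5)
Your proposal is correct, and its skeleton is exactly the paper's: reduce, via $\Id-U_\alpha=(\Id-U)^\alpha$, to showing that $\Id-U$ has a bounded $H^\infty(\Sigma_\omega)$ calculus for some (indeed any) $\omega>\frac{\pi}{2}$ with $\alpha\omega<\frac{\pi}{2}$, then apply Lemma~\ref{fraction} and Proposition~\ref{Prop LM4.1} (using Theorem~\ref{thm:fractional_powers}(a) for the Ritt property). Where you differ is in how the core sectorial estimate is produced. The paper transfers to the vector-valued shift $S_X$ on $\ell^p_\Z(X)$ via \cite[Theorem 2.8]{BGM89} and then quotes the UMD estimates of \cite{ArhLeM11} for rational functions on the auxiliary two-disc domains $\mathbb{D}_\theta$, using the inclusion $1-\mathbb{D}_\theta\subset\Sigma_\theta$; you instead work directly with the sectorial symbol, check by hand the Mihlin/Marcinkiewicz bound $\abs{m_\varphi}+\abs{\theta\,\partial_\theta m_\varphi(e^{i\theta})}\lesssim_\omega\norm{\varphi}_{H^\infty(\Sigma_\omega)}$ for $m_\varphi(e^{i\theta})=\varphi(1-e^{i\theta})$ (your distance estimate $\mathrm{dist}(1-e^{i\theta},\partial\Sigma_\omega)\gtrsim_\omega\vert 1-e^{i\theta}\vert$ is right), and then invoke a Bourgain/Blunck-type vector-valued multiplier theorem together with Coifman--Weiss transference for the bounded $\Z$-action $n\mapsto U^n$. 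These are the same two ingredients (transference plus a UMD multiplier theorem), so this is a mild repackaging rather than a genuinely different argument; its advantage is that it avoids the domains $\mathbb{D}_\theta$ and makes the multiplier verification explicit, at the price of invoking the discrete multiplier theorem by name. Two small points to tidy up: the symbol $m_\varphi$ lives on $\T=\widehat{\Z}$, so the relevant multiplier/convolution norm is on $\ell^p(\Z;X)$, not on $L^p(\T;X)$ as written; and your final passage from the rational subclass $z^k(a+z)^{-m}$ to all of $H^\infty_0(\Sigma_\omega)$ via the convergence lemma may cost an arbitrarily small enlargement of the angle, which is harmless here since you prove the estimate for every $\omega\in(\frac{\pi}{2},\pi)$ -- this is precisely the approximation step the paper disposes of by citing the method of \cite[Propositions 4.7 and 6.2]{ArhLeM11}.
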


\begin{proof} The proof heavily relies on some results and techniques from \cite{ArhLeM11} concerning the
shift operator. Following this paper, we consider 
$$
\mathbb{D}_\theta = \,D\bigl(-{\rm i}\cot(\theta),
\tfrac{1}{\sin(\theta)}\bigr) \cup D   
\bigl({\rm i}\cot(\theta),\tfrac{1}{\sin(\theta)}\bigr)
$$
for any $\frac{\pi}{2}<\theta<\pi$ (see Figure 2 below).

\begin{figure}[ht]
\begin{center}
\includegraphics[scale=0.5]{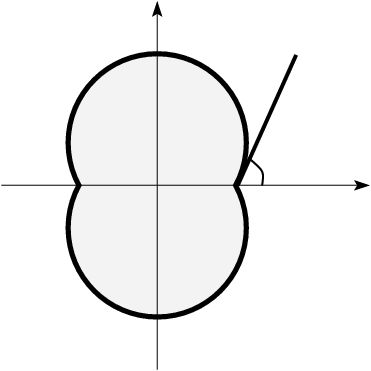}
\begin{small}
\begin{picture}(0,0) 
\put(-59,100){$\pi-\theta$} 
\put(-140,125){$\mathbb D_\theta$} 
\put(-155,83){$-1$} 
\put(-123,83){$0$} 
\put(-82,83){$1$} 
\end{picture}
\end{small}
\end{center}
\caption{\label{f2} Domain $\mathbb{D}_\theta$}
\end{figure}

Let $\alpha\in(0,1)$ and let $\theta>\frac{\pi}{2}$ such that $\alpha\theta<\frac{\pi}{2}$.
Let $S_X \co \ell^p_{\mathbb{Z}}(X) \to \ell^p_{\mathbb{Z}}(X)$ 
denote the vector-valued shift operator given by 
$$
S_X\big((x_k)_{k \in \Z}\big) =(x_{k-1})_{k \in \Z}.
$$
Let $K=\sup_{n\in \mathbb{Z}}\norm{U^n}$. By the transference principle \cite[Theorem 2.8]{BGM89}, one has
$$
\bnorm{\varphi(U)}_{X \to X} \leq K^2 \bnorm{\varphi(S_X)}_{\ell^p_{\mathbb{Z}}(X) \to \ell^p_{\mathbb{Z}}(X)}
$$
for any rational function $\varphi$ with poles outside $\ovl{\mathbb{D}_\theta}$. 
Further, it follows from the proof of \cite[Proposition~6.2]{ArhLeM11} 
that there exists a constant $C>0$ such that
$$
\bnorm{\varphi(S_X)}_{\ell^p_{\mathbb{Z}}(X) \to \ell^p_{\mathbb{Z}}(X)} \leq C\,
\sup\bigl\{\vert \varphi(z)\vert\, :\, z\in \mathbb{D}_\theta\bigr\}
$$
for such functions. The existence of this constant $C$ depends on the UMD property of $X$.
Combining these two estimates yields
$$
\bnorm{\varphi(U)}_{X \to X} \leq C K^2\, 
\sup\bigl\{\vert \varphi(z)\vert\, :\, z\in \mathbb{D}_\theta\bigr\}
$$
for any rational function $\varphi$ with poles outside $\ovl{\mathbb{D}_\theta}$. 
Then the inclusion $1-\mathbb{D}_\theta \subset \Sigma_\theta$ and the method of the proof of \cite[Propositions~4.7 and 6.2]{ArhLeM11} imply that the sectorial operator $\Id - U$ admits a bounded $H^{\infty}(\Sigma_{\theta})$ functional calculus.
Applying Lemma \ref{fraction} we deduce that 
the  operator $(\Id - U)^{\alpha}$ has a bounded $H^{\infty}(\Sigma_{\alpha\theta})$ functional calculus.
Finally by Proposition~\ref{Prop LM4.1}, we obtain that
the Ritt operator $U_{\alpha} = \Id - (\Id - U)^{\alpha}$ has a bounded 
$H^{\infty}(B_{\gamma})$ functional calculus for some $\gamma \in (0, \frac{\pi}{2})$.
\end{proof}


\section{Positive isometric Dilations of Ritt Operators and analytic semigroups}\label{sec:dilations}


The aim of this section is to improve dilation results from \cite{FroWei06} and {\cite{ArhLeM11} concerning
bounded analytic semigroups or Ritt operators with a bounded $H^\infty$ functional calculus. 
The main point is that we are able to construct isometric dilations
whereas the above cited papers only established isomorphic dilations. 
Also the class of Banach spaces on which we consider semigroups or Ritt operators 
is larger than the ones in \cite{FroWei06,ArhLeM11}.

Our constructions will rely on abstract square functions. 
We start with some preliminaries on such objects in the discrete case. 
We let $\Omega_0=\{-1,1\}^\Z$ equipped with its normalized Haar measure. 
For any integer $k \in \Z$, we define $\epsi_k$ by $\epsi_k(\omega)=\omega_k$ 
if $\omega=(\omega_i)_{i \in \Z} \in \Omega_0$. The coordinate functionals $\epsi_k$
are independent Rademacher variables on the probability space $\Omega_0$.

Let $X$ be a Banach space and let $1<p<\infty$. 
We let $\Rad_p(X)\subset L^p(\Omega_0;X)$ be the closure of 
${\rm Span}\bigl\{\epsi_k \ot x\ |\ k \in \Z,\ x\in X\bigr\}$ in 
the Bochner space $L^p(\Omega_0;X)$. Thus, 
for any finite family  $(x_k)_{k \in \Z}$ of elements of $X$, we have
\begin{equation*}
\Bgnorm{\sum_{k \in \Z}^{} \epsi_k \ot x_k}_{\Rad_p(X)} 
\,=\,
\Bigg(\int_{\Omega_0}\bgnorm{\sum_{k \in \Z} \epsi_k(\omega)\,
x_k}_{X}^{p}\,d\omega\,\Bigg)^{\frac{1}{p}}.
\end{equation*}
We simply write $\Rad(X)=\Rad_2(X)$. 
By Kahane's inequalities (see e.g. \cite[Theorem 11.1]{DJT95}), 
the Banach spaces $\Rad(X)$ and $\Rad_p(X)$ are canonically isomorphic.

Let $T \co X \to X$ be a Ritt operator and let $\alpha>0$. For any $x \in X$ and
any $k \geq 0$, consider the element $x_k=(k+1)^{\alpha-\frac{1}{2}}T^{k}(\Id-T)^\alpha x$ of $X$. 
If the series $\sum_{k\geq 0}\varepsilon_k \ot x_k\,$ converges in $L^2(\Omega_0;X)$ then we set
$$
\norm{x}_{T,\alpha}\, =\,\biggnorm{\sum_{k=0}^{\infty}  (k+1)^{\alpha - 
\frac{1}{2}}\, \varepsilon_k\otimes T^{k}(\Id-T)^\alpha
x}_{\Rad(X)}.
$$
We set $\norm{x}_{T,\alpha}=\infty$ otherwise. These `square functions' $\norm{\ }_{T,\alpha}$
were introduced in \cite{ArhLeM11} to which we refer for more information (see also \cite{Mer12}).

In the sequel we consider Banach spaces with finite cotype. We refer
the reader e.g. to \cite{DJT95} for information on cotype. We note that 
if a Banach space $X$ is UMD, then $X$ and $X^*$ have finite cotype.

We also note that if $X=X_\R\oplus i X_\R$ is an ordered Banach space (see the last part
of Section 2), 
then $L^p(\Omega';X)$ is the complexification
of the real space $L^p(\Omega';X_\R)$. Moreover the latter 
has a natural order defined by writing for $f\in L^p(\Omega';X_\R)$
that $f\geq 0$ when $f(\omega)\geq 0$ for almost every $\omega\in\Omega'$.
This makes $L^p(\Omega';X)$ an ordered Banach space.

\begin{thm}
\label{thm:positive_dilation} Assume that $X$ is a reflexive Banach space and that
$X$ and $X^*$ have finite cotype.
Let $T \co  X \to X$ be a Ritt operator which admits a bounded $H^{\infty}(B_{\gamma})$ 
functional calculus for some $\gamma \in (0, \frac{\pi}{2})$. Let $1<p<\infty$. 
Then there exist a measure space $\Omega'$, an isometric isomorphism 
$U \co L^p(\Omega';X) \to L^p(\Omega';X)$ together with two 
bounded operators $J \co X \to L^p(\Omega';X)$ and 
$Q \co L^p(\Omega';X) \to X$ such that
$$
T^n = Q U^n J, \qquad \text{for all } n \ge 0.
$$
Moreover: 
\begin{itemize}
\item [(a)] If $X$ is an ordered Banach space, then the map $U$ can  be chosen to be a positive operator;
\item [(b)] If $X$ is a closed subspace of an $L^p$-space, then the Bochner space $L^p(\Omega';X)$
is also a closed subspace of an $L^p$-space and the map $U$ can  be chosen to be the restriction 
of a positive isometric isomorphism on an $L^p$-space;
\item [(c)] If $X$ is an $SQ_p$-space, then the Bochner space $L^p(\Omega';X)$ is also an $SQ_p$-space,
and the map $U$ can  be chosen to be a compression of a positive isometric isomorphism on an $L^p$-space.
\end{itemize}
\end{thm}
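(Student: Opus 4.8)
The plan is to construct the dilation via the discrete square functions $\norm{\ }_{T,\alpha}$ together with a shift operator, following and strengthening the scheme of \cite{ArhLeM11}. First I would fix $\alpha>1$ sufficiently close to $1$ so that Proposition~\ref{prop:fractional_hinfty} applies: since $T$ has a bounded $H^\infty(B_\gamma)$ calculus, for such $\alpha$ the fractional power $T_\alpha = \Id-(\Id-T)^\alpha$ is again a Ritt operator with a bounded $H^\infty(B_{\gamma'})$ calculus for some $\gamma'<\frac{\pi}{2}$. The reason for passing to $T_\alpha$ is the same as in \cite{ArhLeM11}: the exponent $\alpha>1$ gives enough decay (via the estimate $\norm{n(T^n-T^{n-1})}=O(1/n)$ for Ritt operators) to make the relevant series of Rademacher-weighted iterates converge and, more importantly, to realize $T_\alpha$ (hence $T$ after composing back) as a \emph{compression of a shift}. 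Since $X$ and $X^*$ have finite cotype, the bounded $H^\infty$ calculus guarantees that the square function norm $\norm{\ }_{T_\alpha,\beta}$ (for a suitable $\beta$) and its dual counterpart are both equivalent to the norm of $X$ — this two-sided square function equivalence is exactly the input that upgrades an \emph{isomorphic} dilation into an \emph{isometric} one after renorming.

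The core construction I would carry out is the following. Using the square function norm, embed $X$ into a weighted $\ell^p$-type space of $X$-valued sequences on which the backward shift acts; concretely one builds $J\co X\to L^p(\Omega';X)$ by $Jx = \sum_{k\geq 0} \varepsilon_k\otimes (\text{weighted iterates of }T_\alpha\text{ applied to }x)$, where $\Omega'$ incorporates both the Rademacher variable space $\Omega_0$ and an index set $\Z$ carrying a bilateral shift. The bilateral shift $S_X$ on $\ell^p_\Z(L^p(\Omega_0;X))\cong L^p(\Omega';X)$ is an isometric isomorphism, and one checks the intertwining identity that makes $T_\alpha^n$ equal to $QS_X^nJ$ for appropriate bounded $Q$; passing from $T_\alpha$ back to $T$ is done via the power series $T = \sum_k b_k T_\alpha^k$ coming from inverting $z\mapsto 1-(1-z)^\alpha$ on a Stolz domain (the $H^\infty(B_{\gamma'})$ calculus of $T_\alpha$ is what legitimizes this), absorbing the coefficients into $Q$ and $J$. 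One then renorms $L^p(\Omega';X)$ — or rather chooses the weights so that no renorming is needed — using the square function equivalence so that $S_X$ becomes genuinely isometric rather than merely power bounded, and the identity $T^n = QU^nJ$ holds with $U$ isometric.

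For the three refinements I would track the structure of $X$ through the construction. Since $\Rad_p(X)$ and Bochner spaces $L^p(\cdot;X)$ are built functorially from $X$ by forming $\ell^p$ or $L^p$ sums, the operations preserve the classes in question: if $X\subset L^p$ then $L^p(\Omega';X)\subset L^p(\Omega'\times\cdot)$; if $X=E/F$ is an $SQ_p$-space then $L^p(\Omega';X)$ is again $SQ_p$ (Bochner spaces commute with subspaces and quotients). The shift $S_X$ on a Bochner $L^p$-space \emph{is} a positive isometric isomorphism of that $L^p$-space, which immediately gives (a), and in the subspace/quotient cases $U$ is the restriction, respectively the compression, of that ambient positive isometric shift — using that the subspaces arising in the construction are shift-invariant so that Proposition~\ref{compression}-type compressions make sense. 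The main obstacle, and the place requiring the most care, is the isometric upgrade: getting $U$ to be \emph{exactly} isometric (not just similar to an isometry with controlled constants) forces one to choose the weighting exponents and the square-function parameter so that the two-sided square function estimate has constant $1$ after an appropriate equivalent renorming of the target space, and simultaneously to keep that renormed target inside the $SQ_p$ (respectively subspace-of-$L^p$) class; reconciling the renorming with the requirement that $U$ be the restriction/compression of a \emph{positive} isometry of an honest $L^p$-space is the delicate technical heart of the argument, and I would handle it by performing the renorming at the level of the $L^p$-space containing $X$ before taking the Bochner extension.
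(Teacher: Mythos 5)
Your proposal has two genuine gaps, and they are structural rather than cosmetic. First, the detour through a fractional power $T_\alpha$ with $\alpha>1$ is not only unnecessary for this theorem (it belongs to the later argument where one produces a \emph{contractive Ritt} dilation, Theorem \ref{thm:main_result}), it defeats the stated conclusion as you have set it up. If you dilate $T_\alpha$ through a shift, $T_\alpha^n=QS_X^nJ$, then recovering $T$ means applying the inverse series $1-(1-z)^{\beta}$, $\beta=1/\alpha\in(0,1)$, to \emph{both} sides: for polynomials $\phi$ one gets $\phi(T_\alpha)=Q\phi(S_X)J$, hence $T^n=Q\,(S_X)_\beta^n\,J$. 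The coefficients cannot be ``absorbed into $Q$ and $J$'' because for the $n$-th power they must act in the middle of the factorization; and $(S_X)_\beta$ is a contraction (a Ritt operator), not an isometric isomorphism, so you do not obtain $T^n=QU^nJ$ with $U$ an invertible isometry, which is exactly what the theorem asserts. Second, the ``renorm so the two-sided square function equivalence has constant $1$'' idea is not how the isometry arises, and you yourself identify why it is dangerous: a renorming of the target would threaten the assertions (a)--(c) about positivity and the $L^p$/$SQ_p$ structure. In fact no renorming and no two-sided equivalence are needed at all.

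The correct mechanism, which your sketch is missing, is the following. Take $U=\Id_X\oplus\mathfrak U$ on $X\oplus_p L^p(\Omega_0;X)\cong L^p(\Omega';X)$, where $\mathfrak U$ is the extension of $\mathfrak u\otimes\Id_X$ and $\mathfrak u$ is the pullback of the coordinate shift on $\Omega_0=\{-1,1\}^{\Z}$; this $U$ is \emph{automatically} a positive isometric isomorphism of the Bochner space (and restriction/compression of a positive isometric isomorphism of an honest $L^p$-space in cases (b), (c)), with no weights or renorming. Then one needs only \emph{one-sided} (upper) square function estimates, $\norm{x}_{T,\frac12}\lesssim\norm{x}_X$ and $\norm{y}_{T^*,\frac12}\lesssim\norm{y}_{X^*}$, which follow from the quadratic $H^\infty(B_{\gamma'})$ functional calculus (valid because $X$, $X^*$ have finite cotype); these define $J_1$ on $X$ and $J_2$ on $X^*$ using the Mean Ergodic decomposition $X=\Ker(\Id-T)\oplus\overline{\Ran(\Id-T)}$ (reflexivity). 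The exact identity $T^n=QU^nJ$ then comes from the duality computation
$\sum_{k\ge0}\langle T^{k+n}(\Id-T)^{1/2}x_1,\,T^{*n}(\Id-T^*)^{1/2}y_1\rangle=\langle(\Id+T)^{-1}T^nx_1,y_1\rangle$,
whose correction factor is removed by composing $J_1$ with $\Theta(x_0+x_1)=x_0+(\Id+T)x_1$ and setting $J=J_1\Theta$, $Q=J_2^*$. Your proposal never supplies a mechanism of this kind for obtaining the identity exactly, and the vague ``choose the weights so that no renorming is needed'' does not substitute for it.
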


\begin{proof} 
We start with the construction of $U$, which is a universal operator
(it does not depend on $T$), and we check its properties listed in (a), (b) and (c).
First we define $\mathfrak{u} \co L^p(\Omega_0) \to L^p(\Omega_0)$ as the 
pullback of the coordinate right shift. That is, for any $f\in L^p(\Omega_0)$ and any
$(\omega_k)_{k \in \mathbb{Z}} \in \Omega_0$, 
$$
\mathfrak{u}(f)\big((\omega_k)_k\big)=f\big((\omega_{k-1})_k\big). 
$$
Then $\mathfrak{u}$ is a positive isometric isomorphism. Hence
one can extend $\mathfrak{u} \ot \Id_X$ to an isometric 
isomorphism $\mathfrak{U}$ on the Bochner space $L^p(\Omega_0;X)$.

Note that for any $k \in \Z$, one  has $\mathfrak{u}(\epsi_k) = \epsi_{k-1}$. 
Consequently, for any element $\sum_{k \in \Z} \epsi_k \ot x_k $ in $\Rad_p(X)$, 
we have
\begin{equation}\label{Action}
\mathfrak{U}\bigg(\sum_{k \in \Z} \epsi_k \ot x_k\bigg)=\sum_{k \in \Z} 
\epsi_{k-1} \ot x_k=\sum_{k \in \Z} \epsi_{k} \ot x_{k+1}.
\end{equation}
The $\ell^p$-direct sum $X \oplus_{p} L^p(\Omega_0;X)$ is 
canonically isometrically isomorphic
to a Bochner space $L^p(\Omega';X)$. 
We let $U$ be the isometric isomorphism $\Id_X \oplus \mathfrak{U}$ on 
the space $X \oplus_{p} L^p(\Omega_0;X) =  L^p(\Omega';X)$. 

If $X$ is an ordered Banach space, then 
$U$ is clearly a positive operator. 
If $X$ is a closed subspace of an $L^p$-space $L^p(\Omega)$, 
then $U$ is the restriction of the positive isometry 
$\Id_{L^p(\Omega)} \oplus (\mathfrak{u} \overline{\ot} \Id_{L^p(\Omega)})$ 
on the $L^p$-space $L^p(\Omega) \oplus_{p} 
L^p(\Omega_0;L^p(\Omega))=L^{p}(\Omega' \times \Omega)$. 
If $X=E/F$ for some closed subspaces $F\subset E\subset L^p(\Omega)$, then
$L^p(\Omega';F)\subset L^p(\Omega';E)$ are closed subspaces of $L^p(\Omega';L^p(\Omega))=L^p(\Omega' 
\times \Omega)$ and, by \cite[Proposition 7.4]{Defant-Book}, we have an isometric isomorphism
$$
L^p(\Omega';X)=L^p(\Omega';E/F)=L^p(\Omega';E)/L^p(\Omega';F).
$$
Hence the Bochner space $L^p(\Omega',X)$ is a quotient of a closed subspace 
of $L^p(\Omega' \times \Omega)$. 
Moreover $U$ is the compression of the 
positive isometric isomorphism $ \Id_{L^p(\Omega)} \oplus (\mathfrak{u} 
\ot \Id_{L^p(\Omega)}) \co L^p(\Omega' \times \Omega) \to L^p(\Omega' 
\times \Omega)$ with respect to the subspaces $L^p(\Omega',F) \subset 
L^p(\Omega',E) \subset L^p(\Omega' \times \Omega)$.

We now show that $U$ is a dilation of $T$. 
Since the Banach space $X$ has finite cotype, we obtain from the assumption and 
\cite[Theorem 6.4]{Mer12} that the operator $T$ admits a quadratic $H^\infty(B_{\gamma'})$ 
functional calculus for some $\gamma' \in (\gamma,\frac{\pi}{2})$. That is, 
there exists a positive constant $C$ such that for any integer $n \geq 0$, 
for any $\phi_0,\ldots,\phi_n$ in $H_{0}^{\infty}(B_{\gamma'})$ and for any $x \in X$,
$$
\bgnorm{\sum_{k=0}^{n} \epsi_k \ot \phi_k(T)x}_{{\rm Rad}(X)}\leq C \norm{x}_X\,
\sup\biggl\{\bigg(\sum_{k=0}^{n} |\phi_k(z)|^2\bigg)^{\frac{1}{2}}\, :\, z\in B_{\gamma'}\biggr\}.
$$
Let us apply this estimate with 
$\phi_k(z)=z^{k}(1-z)^{\frac{1}{2}}$. For any $z \in B_{\gamma'}$, we have
$$
\sum_{k=0}^{\infty} |\phi_k(z)|^2 
=\sum_{k=0}^{\infty} |z|^{2k} |1-z| 
= \frac{|1-z|}{1-|z|^2}\leq \frac{|1-z|}{1-|z|}\leq C_{\gamma'},
$$
by (\ref{Stolz}). We deduce that for any $n\geq 1$,
$$
\biggnorm{\sum_{k=0}^{n}  \varepsilon_k\otimes T^{k}(\Id-T)^{\frac12}
x}_{\Rad(X)}\,\leq C C_\gamma\norm{x}_X.
$$
Since $X$ has finite cotype, this uniform estimate implies the convergence 
of the series $\sum_{k}  \varepsilon_k\otimes T^{k}(\Id-T)^{\frac12}
x$ (see \cite{Kwa}). Then we deduce (with $C'=CC_\gamma$) 
the following square function estimate,
$$
\norm{x}_{T,\frac{1}{2}} \leq C'\norm{x}_X, \qquad \text{for all }  x \in X.
$$
Similarly, there is a positive constant $C''$ such that
$$
\norm{y}_{T^*,\frac{1}{2}}\leq C'' \norm{y}_{X^*}, \qquad \text{for all } y \in X^*.
$$

Since $T$ is power bounded and $X$ is reflexive, the Mean Ergodic Theorem 
(see e.g. \cite[Subsection 2.1.1]{Kr85}) ensures that we have  direct sum decompositions 
$$ X = \Ker (\Id-T) \oplus \overline{\Ran (\Id - T)}
\qquad\hbox{and}\qquad
X^* = \Ker (\Id-T^*) \oplus \overline{\Ran (\Id-T^*)}. 
$$
Furthermore for any $x_0\in \Ker (\Id-T), x_1\in \overline{\Ran (\Id - T)},
y_0\in \Ker (\Id-T^*)$ and $y_1\in \overline{\Ran (\Id - T^*)}$, 
we have $\langle x_0,y_1\rangle = \langle x_1,y_0\rangle =0$, hence
\begin{equation}\label{decomp}
\langle x_0 +x_1 , y_0 + y_1 \rangle
\,=\, \langle x_0,y_0\rangle + \langle x_1,y_1\rangle.
\end{equation}

Let $p'$ be the conjugate of $p$. Note that since $X$ is reflexive we have an isometric isomorphism
\begin{equation}\label{RNP}
L^p(\Omega;X)^* \,=\, L^{p'}(\Omega;X^*).
\end{equation}

From the above square function estimates, we  may define a bounded linear map
\begin{align*}
J_1 \co X =  \Ker (\Id-T) \oplus \ovl{\Ran(\Id-T)}& \to X \oplus_p L^p(\Omega_0;X) \\
x_0+x_1 & \mapsto \biggl(x_0, \sum_{k=0}^{\infty} \epsi_k \ot T^{k}(\Id-T)^{\frac{1}{2}}x_1\biggr)
\end{align*}
and a similar $J_2 \co X^{*} \to   X^* \oplus_{p'} L^{p'}(\Omega_0;X)$. 
Consider $x_0 \in \Ker (\Id-T) $, $x_1 \in \ovl{\Ran(\Id-T)}$, $y_0\in \Ker (\Id-T^*)$ and 
$y_1\in \ovl{\Ran(\Id-T^*)}$. 
For any $n\geq 0$, we have
$$
U^n J_1(x_0+ x_1) = 
\biggl(x_0, \sum_{k=-n}^{\infty} \epsi_k \ot T^{k+n}(\Id-T)^{\frac{1}{2}}x_1\biggr),
$$
by (\ref{Action}). Hence
\begin{align*}
\bigl\langle U^n & J_1(x_0+ x_1) , J_2(y_0+y_1)\bigr\rangle
\\ & = \biggl\langle
\biggl(x_0, \sum_{k=-n}^{\infty} \epsi_k \ot T^{k+n}(\Id-T)^{\frac{1}{2}}x_1\biggr),
\biggl(y_0, \sum_{k=0}^{\infty} \epsi_k \ot T^{*n}(\Id-T^*)^{\frac{1}{2}}y_1\biggr)\biggr\rangle \\
& =
\langle x_0, y_0\rangle\, +\,
\sum_{k=0}^\infty\bigl\langle T^{k+n}(\Id-T)^{\frac{1}{2}}x_1, T^{*n}(\Id-T^*)^{\frac{1}{2}}y_1\bigr\rangle.
\end{align*}
As in the proof of~\cite[Theorem~4.8]{ArhLeM11} one shows, using the fact that
$x_1 \in \overline{\Ran(\Id-T)}$ and $y_1 \in \overline{\Ran(\Id-T^*)}$, that we have
$$
\sum_{k=0}^\infty\bigl\langle T^{k+n}(\Id-T)^{\frac{1}{2}}x_1, T^{*n}(\Id-T^*)^{\frac{1}{2}}y_1\bigr\rangle
= \bigl\langle 
(\Id + T)^{-1}T^n x_1,y_1\bigr\rangle.
$$
Now we introduce the bounded operator 
\begin{align*}
\Theta \co X =  \Ker (\Id-T) \oplus \ovl{\Ran(\Id-T)}& \to \Ker (\Id-T) \oplus \ovl{\Ran(\Id-T)}=X \\
x_0 + x_1  & \mapsto  x_0+ (\Id+T)x_1.
\end{align*}
Then it follows from results above that
$$
\bigl\langle U^n J_1\Theta(x_0+ x_1) , J_2(y_0+y_1)\bigr\rangle = 
\langle x_0, y_0\rangle\, +\,\langle T^n x_1, y_1\rangle.
$$
Using (\ref{RNP}), we define $Q=J_2^*\colon X \oplus_p L^p(\Omega_0;X)\to X$. Then letting 
$J=J_1\Theta$, the above identity and (\ref{decomp}) yield $T^n=QU^nJ$ for any $n\geq 0$. 
\end{proof}

We now pass to semigroups. Roughly speaking, Rademacher averages used in the proof of 
Theorem \ref{thm:positive_dilation} will be replaced by deterministic stochastic 
integration with respect to some Brownian motion. We need some preliminaries on 
second quantization and on the vector valued Gaussian spaces $\gamma(\cdotp,X)$ 
introduced by Kalton-Weis in \cite{KW04} (a first version of this paper was circulated 
in 2001). These $\gamma$-spaces were introduced in order to define abstract square functions in 
the context of $H^\infty$ functional calculus. Since then, they were used in 
various other directions, see in particular \cite{Lem10} and 
\cite{vNeWei05,NeeVerWei13}. 
Note that in the present paper we need to mix real 
valued Gaussian variables and complex Banach spaces.

We start with a little background on Gaussian Hilbert spaces and second quantization.  
For more systematic discussions on this topic we refer
to the books \cite{Jan97} and \cite{Sim74}. Let $H_\R$ be a real Hilbert space.
A Gaussian random process indexed by $H_\R$ is a probability space 
$\widehat{\Omega}$ together with a linear isometry 
\begin{equation}\label{W}
W \co H_\R \longrightarrow L^2_\R(\widehat{\Omega})
\end{equation}
satisfying the following two properties. 
\begin{itemize}
\item [(i)] Each $W(h)$ is a Gaussian random variable;
\item [(ii)] The linear span of the products $W(h_1)W(h_2)\cdots W(h_m)$, 
with $m\geq 0$ and $h_1,\ldots, h_m$ in $H_\R$,
is dense in the real Hilbert space $L^2_\R(\widehat{\Omega})$.
\end{itemize}
Here we make the convention that the empty product, correspondong to $m=0$ in (ii), 
is the constant function $1$.
Each product $W(h_1)W(h_2)\cdots W(h_m)$ belongs to $L^p_\R(\widehat{\Omega})$ 
for any $1\leq p<\infty$
and their linear span is also dense in $L^p_\R(\widehat{\Omega})$.

Let $T\colon H_\R\to H_\R$ be a contraction. The `second quantization
of $T$' is a positive operator $\Gamma(T)\colon L^1(\widehat{\Omega})\to 
L^1(\widehat{\Omega})$ such that $\Gamma(T)(1) = 1$, 
\begin{equation}\label{SQ1}
\Gamma(T)W(h) = W\bigl(T(h)\bigr), \qquad\hbox{for all } h\in H_\R,
\end{equation}
and for any $1\leq p<\infty$, 
$\Gamma(T)$ restricts to a contraction 
$$
\Gamma_p(T)\colon L^p(\widehat{\Omega})\longrightarrow L^p(\widehat{\Omega}).
$$
Furthermore, the second quantization functor $\Gamma$ satisfies the following.

\begin{lemma}\label{SQ2} Let $1\leq p<\infty$.
\begin{itemize}
\item [(i)] For any two contractions $T_1,T_2\colon H_\R\to H_\R$, we
have $\Gamma_p(T_1 T_2) =  \Gamma_p(T_1)\Gamma_p(T_2)$.
\item [(ii)] If $(T_t)_{t \geq 0}$ is a $C_0$-semigroup of contractions 
on $H_\R$, then $\big(\Gamma_p(T_t)\big)_{t \geq 0}$ is a 
$C_0$-semigroup of contractions on $L^p(\widehat{\Omega})$.
\end{itemize}
\end{lemma}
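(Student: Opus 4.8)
The plan is to derive both statements from the way $\Gamma(T)$ acts on the Wick (stochastic) products $:W(h_1)\cdots W(h_m):$, together with the consistency of the family $(\Gamma_p)_{1\le p<\infty}$ and a density argument. Recall that $\Gamma(T)$ sends $:W(h_1)\cdots W(h_m):$ to $:W(Th_1)\cdots W(Th_m):$; in particular $\Gamma(\Id_{H_\R})=\Id$, and $\Gamma(T)$ maps into itself the linear span $\mathcal{P}$ of the products $W(h_1)\cdots W(h_m)$, which is also the span of the Wick products $:W(h_1)\cdots W(h_m):$. By the defining density property of $W$ (property (ii) after \eqref{W}), $\mathcal{P}$ is dense in $L^p_\R(\widehat{\Omega})$, hence in $L^p(\widehat{\Omega})$, for every $1\le p<\infty$, and on $\mathcal{P}\subset L^1(\widehat{\Omega})\cap L^p(\widehat{\Omega})$ every $\Gamma_p(T)$ agrees with $\Gamma(T)$, being the restriction of the same operator on $L^1(\widehat{\Omega})$.

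For (i): on a Wick product $w=\,:W(h_1)\cdots W(h_m):$ both $\Gamma_p(T_1T_2)$ and $\Gamma_p(T_1)\Gamma_p(T_2)$ evaluate, via the displayed action of $\Gamma$ and the consistency just noted, to $:W(T_1T_2h_1)\cdots W(T_1T_2h_m):$. Since such $w$ span the dense subspace $\mathcal{P}$ and all operators involved are contractions, the identity $\Gamma_p(T_1T_2)=\Gamma_p(T_1)\Gamma_p(T_2)$ propagates from $\mathcal{P}$ to all of $L^p(\widehat{\Omega})$.

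For (ii): writing $S_t:=\Gamma_p(T_t)$, the semigroup law $T_{s+t}=T_sT_t$ together with (i) and $\Gamma(\Id_{H_\R})=\Id$ gives $S_{s+t}=S_sS_t$ and $S_0=\Id$, while each $S_t$ is a contraction as recalled before the statement of the lemma. The remaining point is strong continuity, i.e. $\|S_tf-f\|_p\to0$ as $t\to0^+$ for every $f\in L^p(\widehat{\Omega})$. Since the $S_t$ are uniformly contractive, a standard $\varepsilon/3$ argument reduces this to $f$ in the dense subspace $\mathcal{P}$, hence by linearity to $f=W(h_1)\cdots W(h_m)$. By Wick's theorem, $\Gamma(T_t)f$ is a finite linear combination, with scalar coefficients independent of $t$, of Wick products $:W(T_th_{i_1})\cdots W(T_th_{i_r}):$ built from subsets of $\{h_1,\dots,h_m\}$, and this expression equals $f$ when $t=0$. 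As $t\to0^+$ one has $T_th_k\to h_k$ in $H_\R$ by strong continuity of $(T_t)$, so $W(T_th_k)-W(h_k)=W(T_th_k-h_k)$ is a centred Gaussian variable of variance $\|T_th_k-h_k\|_{H_\R}^2\to0$, whence $W(T_th_k)\to W(h_k)$ in $L^q(\widehat{\Omega})$ for every $q<\infty$ (a centred Gaussian has $L^q$-norm a fixed multiple of its standard deviation). As these variables are simultaneously bounded in every $L^q$, their finitely many products converge in $L^q$, in particular in $L^p$; summing the finitely many terms yields $\Gamma(T_t)f\to f$ in $L^p(\widehat{\Omega})$, as required.

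I would only sketch the genuinely routine ingredients: the precise combinatorics of Wick's theorem, the elementary fact that a product of finitely many uniformly $L^{mq}$-bounded factors converging in $L^{mq}$ converges in $L^q$, and the $\varepsilon/3$ density argument for $C_0$-semigroups. The step deserving the most attention is the passage between the $L^2$-level objects (the Wick products and the known action of $\Gamma$ on them, where the symmetric Fock-space structure really lives) and the $L^p$-conclusions: one must use that $\Gamma(T)$ preserves the polynomial core $\mathcal{P}\subset L^1(\widehat{\Omega})\cap L^p(\widehat{\Omega})$ and that all the $\Gamma_p(T)$ restrict one common operator, so that the computations performed on $\mathcal{P}$ transfer to $L^p(\widehat{\Omega})$ by density together with the uniform contractivity of the operators involved.
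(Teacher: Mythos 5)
Your proof is correct. The paper itself does not argue this lemma directly: it disposes of (i) (and the other listed properties of $\Gamma$) by citation to Janson's book, and for the strong continuity in (ii) it simply points to ``the argument in \cite[Thm. 4.20]{Jan97}''. What you have written is essentially a self-contained unpacking of that delegated argument: you use the standard fact that $\Gamma(T)$ sends the Wick product $:W(h_1)\cdots W(h_m):$ to $:W(Th_1)\cdots W(Th_m):$, the density of the polynomial algebra $\mathcal{P}$ in $L^p(\widehat{\Omega})$ (property (ii) of the Gaussian process), the fact that each $\Gamma_p(T)$ is the restriction of the single operator $\Gamma(T)$ on $L^1$, and uniform contractivity to propagate identities and limits from $\mathcal{P}$ to $L^p$; for continuity you exploit that $W(T_th-h)$ is centred Gaussian with variance $\norm{T_th-h}^2\to 0$, hence converges in every $L^q$. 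This buys a proof independent of the precise statement of Janson's Theorem 4.20, at the cost of invoking the Wick-product action of $\Gamma$ (which is not among the properties the paper lists, but is standard and is exactly what the cited construction provides). One small point of wording: in the last step the terms of your expansion are Wick products $:W(T_th_{k_1})\cdots W(T_th_{k_r}):$, not ordinary products, so to conclude you should either invoke the inverse Wick expansion (whose coefficients $\langle T_th_i,T_th_j\rangle$ also converge) or observe that the Wick product is a continuous map from $H_\R^r$ to $L^p(\widehat{\Omega})$; this is the same routine Hölder argument you already sketch and is not a gap.
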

These properties can be found in \cite[Chap. 4]{Jan97}, except the assertion (ii)
from Lemma \ref{SQ2}. In the latter statement, the semigroup property
follows from (i) and strong continuity of
$t\mapsto \Gamma_p(T_t)$ follows from the argument in \cite[Thm. 4. 20]{Jan97}.
Note that in this construction, $T$ acts on the {\it real} Banach space $H_\R$ 
whereas $\Gamma_p(T)$ acts on
the {\it complex} Banach space $L^p(\widehat{\Omega})$.

In the sequel we let $H$ denote the standard complexification of $H_\R$. 
For convenience we keep the notation $W$ to denote the 
complexification $W\colon H\to L^2(\widehat{\Omega})$ of
(\ref{W}). This is an isometry. 
Let $(g_n)_{n\geq 1}$ be an independent sequence of 
real valued standard Gaussian variables on some probability space.

Let $X$ be a complex Banach space and
let $H^*$ denote the dual space of $H$. We identify the
algebraic tensor product $H \ot X$ with the subspace of
$\mathcal{B}(H^*,X)$ of all bounded finite rank operators in the usual way.
Namely for any  $h_1,\ldots,h_n$ in $H$ and any $x_1,\ldots,x_n$ in $X$, 
we identify the element $\sum_{k=1}^n h_k\ot x_k$ with the operator
$u\colon H^*\to X$ defined by $u(\xi) = \sum_{k=1}^n \xi(h_k)\, x_k\,$
for any $\xi\in H^*$. For any $u \in H \ot X$, there exists a finite orthonormal
family $(e_1,\ldots,e_n)$ in the real space 
$H_\R$ and a finite family $(x_1,\ldots,x_n)$ of $X$ such that 
$u=\sum_{k=1}^n  e_k\ot x_k$. Then for any $1\leq p<\infty$, we set
$$
\norm{u}_{\gamma^p(H^*,X)}=\Bigg(\mathbb{E}
\bgnorm{ \sum_{k=1}^n g_k \ot x_k}_{X}^p\Bigg)^{\frac{1}{p}}.
$$
By \cite[Corollary 12.17]{DJT95} and its proof, this definition does not depend on the $e_k$'s and $x_k$'s 
representing $u$. We let $\gamma^p(H^*,X)$ be the completion of $H \ot X$ with 
respect to this norm. The identity mapping on $H \ot X$ extends to an injective and 
contractive embedding of $\gamma^p(H^*,X)$ into $\mathcal{B}(H^*,X)$. 
We may thus identify $\gamma^p(H^*,X)$ with a linear subspace in $\mathcal{B}(H^*,X)$. 
Note that by the Khintchine-Kahane inequalities, $\gamma^p(H^*,X)$ does not depend on 
$p$ as a linear space. 

For any orthonormal family $(e_1,\ldots,e_n)$ of $H_\R$, the $n$-tuple
$(W(e_1),\ldots, W(e_n))$ is an orthonormal family as well and all linear combinations
of the $W(e_i)$ are Gaussian. Hence $(W(e_1),\ldots, W(e_n))$ is an independent family of 
real valued standard Gaussian variables. Therefore the operator
$W\otimes \Id_X\colon H\otimes X\to L^p(\widehat{\Omega})\otimes X$
extends to an isometry
$$
W_{p,X}\colon \gamma^p(H^*,X)\longrightarrow L^p(\widehat{\Omega};X).
$$
We record for further use 
the following tensor extension property (see \cite[Prop. 4.3]{KW04}
and its proof).

\begin{lemma}\label{ideal} 
Let $T\colon H\to H$ be a real operator. Then for any $1\leq p<\infty$,
$T\otimes \Id_X\colon H\otimes X\to H\otimes X$
extends to a bounded operator
$$
M_{p,T}\colon \gamma^p(H^*,X)\longrightarrow \gamma^p(H^*,X),
$$
with $\norm{M_{p,T}}=\norm{T}$. Furthermore, if we regard 
$u\in\gamma^p(H^*,X)$ as an operator from $H^*$ into $X$, then 
$M_{p,T}(u)= u\circ T^*$.
\end{lemma}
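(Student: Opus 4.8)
The plan is to reduce everything to the definition of the $\gamma^p$-norm in terms of Gaussian averages and then invoke a comparison principle for Gaussian sums. First I would observe that since $T$ is real, $T$ maps $H_\R$ into $H_\R$, so for $u = \sum_{k=1}^n e_k \ot x_k$ with $(e_1,\dots,e_n)$ orthonormal in $H_\R$, the element $(T\ot \Id_X)(u) = \sum_{k=1}^n T(e_k)\ot x_k$ again lies in $H_\R \ot X$. It need not be in the form prescribed by the definition of the norm (the $T(e_k)$'s are not orthonormal), so the first real step is to note that the $\gamma^p$-norm of an arbitrary element $\sum_k h_k \ot x_k$ with $h_k \in H_\R$ equals $\bigl(\mathbb{E}\,\norm{\sum_k h_k \ot \tilde g_k}_X^p\bigr)^{1/p}$ where $\tilde g_k$ is an $H_\R$-isonormal Gaussian process evaluated at $h_k$; equivalently, after Gram–Schmidt this reduces to the stated formula. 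This is exactly the content of \cite[Corollary 12.17]{DJT95} already cited, and I would just quote it.

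The heart of the matter is then: if $u$ corresponds (via the identification $H\ot X \subset \mathcal{B}(H^*,X)$) to the operator $\xi \mapsto \sum_k \xi(e_k)x_k$, I must check that $(T\ot\Id_X)(u)$ corresponds to $u\circ T^*$, and that its $\gamma^p$-norm is at most $\norm{T}\norm{u}_{\gamma^p(H^*,X)}$. The identification claim is a direct computation: $(T\ot\Id_X)(u)$ sends $\xi$ to $\sum_k \xi(T e_k)x_k = \sum_k (T^*\xi)(e_k)x_k = u(T^*\xi)$, so $(T\ot\Id_X)(u) = u\circ T^*$ as operators $H^*\to X$. For the norm bound, I would write $T = \norm{T}\,V$ with $V$ a contraction on $H_\R$ (assume $T\ne 0$), and use the well-known fact that for a real contraction $V$ on a real Hilbert space and any finite family $(x_k)$ in a Banach space, $\mathbb{E}\,\norm{\sum_k (Ve_k$-coordinates$)g_k\ot x_k}^p \le \mathbb{E}\,\norm{\sum_k g_k\ot x_k}^p$ — this is the contraction principle for Gaussian variables, which follows from the fact that a contraction on $\ell^2_n$ is an average of (compositions of) coordinate projections and orthogonal maps, under each of which the Gaussian distribution of $(g_1,\dots,g_n)$ is invariant or contracted. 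Concretely: extend $V|_{\vect(e_1,\dots,e_n)}$ to an operator on a possibly larger space, dilate it to an orthogonal operator $O$ on $H_\R\oplus H_\R$ so that $V = P O \iota$ with $\iota$ an isometric inclusion and $P$ an orthogonal projection; invariance of Gaussian measure under $O$ and the fact that projecting out Gaussian coordinates only decreases $L^p$-norms of Banach-valued sums then give the inequality. Scaling by $\norm{T}$ yields $\norm{M_{p,T}}\le \norm{T}$, and applying the same to a reverse construction (or testing on a single vector $e\ot x$) gives equality $\norm{M_{p,T}} = \norm{T}$.

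Having shown the bound on the dense subspace $H\ot X$, $M_{p,T}$ extends by density to all of $\gamma^p(H^*,X)$ with the same norm, and the formula $M_{p,T}(u) = u\circ T^*$ persists under this extension by continuity of both sides (the map $u\mapsto u\circ T^*$ is bounded on $\mathcal{B}(H^*,X)$ and $\gamma^p(H^*,X)$ embeds contractively there). The main obstacle is the Gaussian contraction principle for Banach-valued sums under a general real Hilbert-space contraction; everything else is bookkeeping with the identifications. I expect to dispatch that obstacle by the orthogonal-dilation argument above, or simply by citing \cite[Prop. 4.3]{KW04} as the statement already indicates, in which case the proof collapses to verifying that our real-Gaussian, complex-Banach-space setup is covered by that result — which it is, since the $\gamma^p$-norm only sees the real linear structure of $H$ and the (complex) Banach-space norm of $X$.
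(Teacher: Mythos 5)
Your proposal is correct, but it takes a different route from the paper: in the text the lemma is not proved at all — it is recorded as a known tensor extension (ideal) property with a pointer to \cite[Prop.\ 4.3]{KW04} and its proof — whereas you supply a self-contained argument. Your bookkeeping steps (the $\gamma^p$-norm of $\sum_k h_k\otimes x_k$ with $h_k\in H_\R$ depends only on the Gram matrix, via \cite[Cor.\ 12.17]{DJT95} and Gram--Schmidt over $\R$; the identification $(T\otimes\Id_X)(u)=u\circ T^*$ by evaluating on $\xi\in H^*$; extension by density and persistence of the formula through the contractive embedding into $\mathcal{B}(H^*,X)$) are all sound, and the heart of your argument — reduce to a real contraction $V$, take a Julia/Halmos orthogonal dilation $O$ on $H_\R\oplus H_\R$ with $V=PO\iota$, use invariance of the isonormal Gaussian family under $O$ and contractivity of conditional expectation (Jensen) to discard the orthogonal complement — is exactly the standard proof of the right-ideal property of $\gamma$-spaces, so it does prove the bound $\norm{M_{p,T}}\le\norm{T}$, and testing on a single $e\otimes x$ with $e\in H_\R$ gives equality (here you implicitly use that $\norm{T}$ is approximated on real unit vectors, which holds because $T$ is real and the complexification norm is Hilbertian — worth one line). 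Two cosmetic remarks: the aside that a contraction on $\ell^2_n$ is an average of projections and orthogonal maps is not needed (and as phrased is shakier than the dilation argument you then give, which is the one that carries the proof), and your option of simply quoting \cite[Prop.\ 4.3]{KW04} would in fact reproduce the paper's own treatment, provided you add the one-sentence check you mention that the real-Gaussian/complex-space setup falls under that statement. What your explicit route buys is independence from the reference and transparency about where the real structure of $T$ is used; what the paper's route buys is brevity.
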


We now consider the special case when $H=L^2(\Omega)$ for some measure
space $(\Omega,\mu)$ and $H_\R=L^2_\R(\Omega)$. We identity $L^2(\Omega)^*$
with $L^2(\Omega)$ through the standard duality pairing
$$
\langle h',h\rangle =\int_\Omega h'(s)h(s)\, d\mu(s),
$$
so that the above construction leads to $\gamma^p(L^2(\Omega),X)$
and yields an isometric embedding of that space into 
$L^p(\widehat{\Omega};X)$.

Let $f \co \Omega \to X$  be a measurable function which 
is weakly $L^2$, that is, $x^* \circ f \in L^2(\Omega)$ for any $x^* \in X^*$. 
Following \cite[Section 4]{KW04} we can define 
a bounded operator $u^f \co L^2(\Omega) \to X$ by
$$
\big\langle x^*, u^f(h)\big\rangle_{X^*,X}=
\int_{\Omega} \big\langle x^*, f(s)\big\rangle_{X^*,X}h(s) \,d\mu(s),
\qquad x^*\in X^*.
$$
Then we let $\gamma^p(\Omega,X)$ denote the space of all functions $f$ such that
$u^f\in \gamma^p(L^2(\Omega),X)$, equipped with the induced norm (that is, 
$\norm{f}_{\gamma^p(\Omega,X)} : =\norm{u_f}_{\gamma^p(L^2(\Omega),X)})$. 
Note that $\gamma^p(\Omega,X)$ contains all finite rank operators.
More precisely for any $h\in L^2(\Omega)$ and any $x\in X$, $h\otimes x = u^f$, where 
$f\colon \Omega\to X$ is defined by $f(s) = h(s)x$.

We will need the following duality result. 
Note that if $1<p<\infty$ and $p'$ denotes its conjugate, then
$L^{p'}(\widehat{\Omega};X^*)$ is a subspace of the dual space of 
$L^p(\widehat{\Omega};X)$, so that it
makes sense to consider the duality pairing 
$\langle\,\cdotp\,,\,\cdotp\,\rangle_{L^{p'}(\widehat{\Omega};X^*),
L^p(\widehat{\Omega};X)}$.

\begin{lemma}\label{duality}
Let $1<p,p'<\infty$ be conjugate numbers. Let
$f\in\gamma^p(\Omega,X)$ and $g\in\gamma^{p'}(\Omega,X^*)$.
Then $s\mapsto \langle g(s),f(s)\rangle_{X^*,X}$ 
belongs to $L^1(\Omega)$ and we have
$$
\int_\Omega \langle g(s),f(s)\rangle_{X^*,X}\,d\mu(s)\, =\, \bigl\langle W_{p',X^*}(u^g), W_{p,X}(u^f)
\bigr\rangle_{L^{p'}(\widehat{\Omega};X^*),
L^p(\widehat{\Omega};X)}.
$$
\end{lemma}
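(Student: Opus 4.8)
The plan is to reduce the identity to the scalar Gaussian case by first establishing it on finite rank operators and then extending by density. First I would observe that both sides of the claimed identity are continuous bilinear forms in $(f,g)$ with respect to the norms of $\gamma^p(\Omega,X)$ and $\gamma^{p'}(\Omega,X^*)$: the right-hand side is controlled by $\norm{W_{p,X}(u^f)}_{L^p(\widehat\Omega;X)}\norm{W_{p',X^*}(u^g)}_{L^{p'}(\widehat\Omega;X^*)}$, which equals $\norm{f}_{\gamma^p(\Omega,X)}\norm{g}_{\gamma^{p'}(\Omega,X^*)}$ since $W_{p,X}$ and $W_{p',X^*}$ are isometries; the left-hand side requires the (standard) estimate $\norm{s\mapsto\langle g(s),f(s)\rangle}_{L^1(\Omega)}\le \norm{f}_{\gamma^p(\Omega,X)}\norm{g}_{\gamma^{p'}(\Omega,X^*)}$, which is exactly the $\gamma$-duality pairing bound. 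Since finite rank operators are dense in both $\gamma^p(\Omega,X)$ and $\gamma^{p'}(\Omega,X^*)$, it suffices to verify the formula when $f$ and $g$ correspond to finite rank operators.

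Next I would reduce to a single pair of elementary tensors. Write $u^f=\sum_{k=1}^n h_k\otimes x_k$ and $u^g=\sum_{j=1}^m h_j'\otimes x_j^*$ with $h_k,h_j'\in L^2(\Omega)$, $x_k\in X$, $x_j^*\in X^*$. By bilinearity both sides split into sums over $(k,j)$, so it is enough to treat $f(s)=h(s)x$ and $g(s)=h'(s)x^*$ for fixed $h,h'\in L^2(\Omega)$, $x\in X$, $x^*\in X^*$. Then the left-hand side is
\[
\int_\Omega h'(s)h(s)\,\langle x^*,x\rangle\, d\mu(s)\,=\,\langle x^*,x\rangle\,\langle h',h\rangle_{L^2(\Omega)},
\]
using the identification of $L^2(\Omega)^*$ with $L^2(\Omega)$ fixed just before the lemma. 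For the right-hand side, note $W_{p,X}(h\otimes x)=W(h)\otimes x$ viewed in $L^p(\widehat\Omega;X)$ and similarly $W_{p',X^*}(h'\otimes x^*)=W(h')\otimes x^*$, so the pairing is $\langle x^*,x\rangle\,\mathbb{E}\bigl(W(h')W(h)\bigr)$.

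The remaining point is purely about the Gaussian process: $\mathbb{E}\bigl(W(h')W(h)\bigr)=\langle h',h\rangle_{L^2(\Omega)}$. This is immediate from the defining property of $W$ in (\ref{W}), namely that $W\colon H_\R\to L^2_\R(\widehat\Omega)$ is a linear isometry, hence preserves inner products, combined with sesquilinear extension to the complexification $H$ (for real $h,h'$ it is the isometry property; the general case follows by writing $h=h_1+ih_2$, $h'=h_1'+ih_2'$ and expanding, being careful that the pairing in Lemma \ref{duality} is the bilinear $L^2$ pairing, not the sesquilinear one, which matches the conjugation built into the right-hand side duality). Comparing the two displays then gives equality. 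The only genuine subtlety — and the step I would be most careful about — is bookkeeping the conjugations: making sure the bilinear pairing $\langle g(s),f(s)\rangle_{X^*,X}$, the bilinear $L^2$ pairing $\langle h',h\rangle$, and the $L^{p'}$--$L^p$ duality on $\widehat\Omega$ are all consistently bilinear (rather than sesquilinear) so that no stray complex conjugate appears; once the conventions are pinned down the computation is routine.
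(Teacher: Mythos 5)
Your argument is correct and follows essentially the same route as the paper: both reduce by density and bilinear continuity to rank-one tensors $h\otimes x$, $h'\otimes x^*$ and then use the isometry of $W$ to identify $\int_{\widehat\Omega}W(h')W(h)$ with $\langle h',h\rangle$, with the continuity of the left-hand side (your ``standard $\gamma$-duality bound'') being exactly the Kalton--Weis result that the paper cites, phrased there as the trace-class duality ${\rm tr}({u^g}^*u^f)=\int_\Omega\langle g(s),f(s)\rangle\,d\mu(s)$ together with the estimate $|{\rm tr}({u^g}^*u^f)|\le\norm{f}_{\gamma^p(\Omega,X)}\norm{g}_{\gamma^{p'}(\Omega,X^*)}$.
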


\begin{proof} The integrability of $\langle g(\cdotp),f(\cdotp)\rangle$ is established in 
\cite[Cor. 5.5]{KW04}. Moreover it is shown in that paper that 
for any $f\in\gamma^p(\Omega,X)$ and $g\in\gamma^{p'}(\Omega,X^*)$, the composition
operator ${u^g}^* u^f\colon L^2(\Omega)\to L^2(\Omega)$ is trace class and
$$
{\rm tr}\bigl({u^g}^* u^f\bigr) = \int_\Omega \langle g(s),f(s)\rangle_{X^*,X}\,d\mu(s)\,.
$$
Moreover 
\begin{equation}\label{continuity}
\bigl\vert {\rm tr}\bigl({u^g}^* u^f\bigr)\bigr\vert\leq
\norm{f}_{\gamma^p(\Omega,X)}\norm{g}_{\gamma^{p'}(\Omega,X^*)}.
\end{equation}
It therefore suffices to show that 
\begin{equation}\label{trace}
{\rm tr}\bigl({u^g}^* u^f\bigr) \,=\, 
\bigl\langle W_{p',X^*}(u^g), W_{p,X}(u^f)
\bigr\rangle_{L^{p'}(\widehat{\Omega};X^*),
L^p(\widehat{\Omega};X)}.
\end{equation}
Let $(u_k)_k$ and $(u'_k)_k$ be sequences in $L^2(\Omega)\otimes X$ and 
$L^2(\Omega)\otimes X^*$ respectively, such that 
$u_k\to u^f$ in $\gamma^p(L^2(\Omega),X)$ and 
$u'_k\to u^g$ in $\gamma^{p'}(L^2(\Omega),X^*)$, when $k\to\infty$.
Then by (\ref{continuity}), ${\rm tr}\bigl({u'_k}^* u_k\bigr)\to 
{\rm tr}\bigl({u^g}^* u_f\bigr)$. By the continuity of $W_{p',X^*}$ and 
$W_{p,X}$ we also have that $\bigl\langle W_{p',X^*}(u'_k), W_{p,X}(u_k)
\bigr\rangle\to \bigl\langle W_{p',X^*}(u^g), W_{p,X}(u^f)
\bigr\rangle$. Hence it suffices to show (\ref{trace}) in the finite rank case.
By linearity, we are reduced to check this identity when $u=u^f$ and
$u'=u^g$ are rank one.  

To proceed we let $h,h'\in L^2(\Omega)$, $x\in X, x^*\in X^*$ and consider
$u=h\ot x$ and $u'=h'\ot x^*$. On the one hand, $u'^*u = \langle x^*,x\rangle h\ot h'$
hence 
$$
{\rm tr}(u'^*u) = \langle x^*,x \rangle \int_\Omega h(s)h'(s)\, d\mu(s)\,.
$$
On the other hand, by the definition of $W_{p,X}$ and $W_{p',X^*}$, we have
$$
\langle W_{p',X^*}(u'), W_{p,X}(u)\rangle = \langle x^*,x \rangle
\int_{\widehat{\Omega}} W(h')W(h)\,.
$$
Since $W\colon L^2(\Omega)\to L^2(\widehat{\Omega})$ is an isometry, 
the right-hand side is equal
to 
$$\langle x^*,x \rangle \int_{\Omega} h'h\,.
$$ 
This shows 
(\ref{trace}) in that special case, and hence for any 
$f\in\gamma^p(\Omega,X)$ and $g\in\gamma^{p'}(\Omega,X^*)$
by the preceding reasoning.
\end{proof}

\begin{thm}
\label{Th dilation Delta semigroup}
Assume that $X$ is a reflexive Banach space and that
$X$ and $X^*$  have finite cotype. 
Let $A$ be a sectorial operator which admits a bounded $H^{\infty}(\Sigma_{\theta})$ 
functional calculus for some $\theta \in (0, \frac{\pi}{2})$. Let $1<p<\infty$. 
Then there exists a measure space $\Omega'$, a $C_0$-group $(U_t)_{t \in \R}$ of 
isometries on the Banach space $L^p(\Omega';X)$ together with two 
bounded operators $J \co X \to L^p(\Omega';X)$ and 
$Q \co L^p(\Omega';X) \to X$ such that
$$
e^{-tA} = Q U_t J, \qquad \text{for all } t \ge 0.
$$
Moreover: 
\begin{itemize}
\item [(a)] If $X$ is an ordered Banach space, then the maps $U_t$ can  be chosen to be positive operators;
\item [(b)] If $X\subset L^p(\Omega)$ is a closed subspace of an $L^p$-space, then 
$(U_t)_{t\in \R}$ is the restriction of  
a $C_0$-group $(V_t)_{t\in\R}$ of positive isometries on $L^p(\Omega'\times\Omega)$.
\item [(c)] If $X$ is an $SQ_p$-space, then there exists a measure space $\Omega$, a $C_0$-group 
$(V_t)_{t\in\R}$ of
positive isometries on $L^p(\Omega'\times\Omega)$ and
two closed subspaces $F \subset E \subset L^p(\Omega' \times \Omega)$ 
which are invariant under each $V_t$, such that $L^p(\Omega';X) = E/F$ and for any $t\in \R$,
$U_t$ is the compression of $V_t$ to $E/F$.
\end{itemize}
\end{thm}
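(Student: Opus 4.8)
The plan is to run the argument of the proof of Theorem~\ref{thm:positive_dilation}, with Rademacher averages and the bilateral shift replaced by Gaussian averages and the translation group, the second quantization functor $\Gamma$ furnishing all the required positive isometric realizations.

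\emph{The universal group.} I put $H_\R=L^2_\R(\R)$ and let $(\tau_s)_{s\in\R}$, $(\tau_s h)(u)=h(u-s)$, be the translation group, a $C_0$-group of \emph{orthogonal} operators. I realize the Gaussian process $W\co H_\R\to L^2_\R(\widehat{\Omega})$ on a probability space $\widehat{\Omega}$ on which translation is implemented by a measure preserving flow $(\sigma_s)_{s\in\R}$ (stationarity of white noise), so that $\Gamma(\tau_s)f=f\circ\sigma_s$. By Lemma~\ref{SQ2}, $(\Gamma_p(\tau_s))_{s\in\R}$ is a $C_0$-group of positive isometric isomorphisms of $L^p(\widehat{\Omega})$; being a composition with a measure preserving transformation it extends to a $C_0$-group $(\mathfrak{U}_s)_{s\in\R}$ of positive isometric isomorphisms of $L^p(\widehat{\Omega};X)$, and likewise of $L^p(\widehat{\Omega};L^p(\Omega))=L^p(\widehat{\Omega}\times\Omega)$ when $X\subset L^p(\Omega)$. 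I set $L^p(\Omega';X):=X\oplus_p L^p(\widehat{\Omega};X)$ and $U_s:=\Id_X\oplus\mathfrak{U}_s$. Then $(U_s)_{s\in\R}$ is a $C_0$-group of isometries, and exactly as in the proof of Theorem~\ref{thm:positive_dilation} it has properties (a), (b), (c): it is positive when $X$ is ordered; when $X\subset L^p(\Omega)$ it is the restriction of the corresponding positive isometric group on $L^p(\Omega'\times\Omega)$; and when $X=E/F$ with $F\subset E\subset L^p(\Omega)$ it is the compression of that group to $L^p(\Omega';E)/L^p(\Omega';F)=L^p(\Omega';X)$, using \cite[Prop.~7.4]{Defant-Book}. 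Finally, by Lemma~\ref{ideal} the group $(\tau_s)$ induces a $C_0$-group $(M_{p,\tau_s})$ of isometries of $\gamma^p(L^2(\R),X)$ with $M_{p,\tau_s}(u^f)=u^{\tau_s f}$, and checking on elementary tensors with (\ref{SQ1}) one gets the intertwining $W_{p,X}\circ M_{p,\tau_s}=\mathfrak{U}_s\circ W_{p,X}$.

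\emph{Square functions and the embeddings.} Since $X$ is reflexive and $-A$ generates a bounded analytic semigroup, $X=\Ker A\oplus\overline{\Ran A}$ and $X^*=\Ker A^*\oplus\overline{\Ran A^*}$, both invariant under $(e^{-tA})$, resp.\ $(e^{-tA^*})$, with $e^{-tA}=\Id$ on $\Ker A$ and with the orthogonality relations $\langle x_0,y_1\rangle=\langle x_1,y_0\rangle=0$ for $x_0\in\Ker A$, $y_1\in\overline{\Ran A^*}$, and symmetrically. On $\overline{\Ran A}$ the operator $A$ has dense range and a bounded $H^\infty(\Sigma_\theta)$ calculus; since $X$ has finite cotype, the square function estimate associated to $\psi(z)=z^{1/2}e^{-z}$ holds (see \cite{KW04}), which after a unitary change of variables I record as $\|u\mapsto A^{1/2}e^{-uA}x_1\|_{\gamma(\R_+,\,du;\,X)}\le C\|x_1\|$ for $x_1\in\overline{\Ran A}$, together with the analogous estimate for $A^*$ on $X^*$ (using that $X^*$ has finite cotype and $A^*$ has a bounded $H^\infty(\Sigma_\theta)$ calculus). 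Hence $f_{x_1}(u):=\chi_{(0,\infty)}(u)\,A^{1/2}e^{-uA}x_1$ defines an element $u^{f_{x_1}}\in\gamma^p(L^2(\R),X)$ depending boundedly and linearly on $x_1$, and $g_{y_1}(u):=\chi_{(0,\infty)}(u)\,(A^*)^{1/2}e^{-uA^*}y_1$ gives $u^{g_{y_1}}\in\gamma^{p'}(L^2(\R),X^*)$. Using $L^p(\widehat{\Omega};X)^*=L^{p'}(\widehat{\Omega};X^*)$, I define $J_1\co X\to X\oplus_p L^p(\widehat{\Omega};X)$ by $x_0+x_1\mapsto(x_0,W_{p,X}(u^{f_{x_1}}))$ and $J_2\co X^*\to X^*\oplus_{p'}L^{p'}(\widehat{\Omega};X^*)$ by $y_0+y_1\mapsto(y_0,W_{p',X^*}(u^{g_{y_1}}))$.

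\emph{The dilation identity.} For $t\ge 0$ one has $U_tJ_1(x_0+x_1)=(x_0,W_{p,X}(u^{\tau_t f_{x_1}}))$ by the intertwining relation, hence by Lemma~\ref{duality}
$$
\bigl\langle U_tJ_1(x_0+x_1),\,J_2(y_0+y_1)\bigr\rangle=\langle x_0,y_0\rangle+\int_\R\langle g_{y_1}(u),(\tau_t f_{x_1})(u)\rangle_{X^*,X}\,du.
$$
For $t\ge 0$ the integrand is supported in $(t,\infty)$ and equals $\langle Ae^{-(2u-t)A}x_1,y_1\rangle$ there; arguing first for $x_1\in\Ran A$ and $y_1\in\Ran A^*$, where every manipulation is absolutely convergent, and then passing to general $x_1,y_1$ by density and boundedness of the bilinear form $\langle U_tJ_1(\cdot),J_2(\cdot)\rangle$ — exactly as in the proof of \cite[Theorem~4.8]{ArhLeM11} — one finds this integral equals $\tfrac12\langle e^{-tA}x_1,y_1\rangle$. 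Letting $\Theta\co X\to X$ be $\Id$ on $\Ker A$ and $2\,\Id$ on $\overline{\Ran A}$, and setting $J:=J_1\Theta$ and $Q:=J_2^*\co X\oplus_p L^p(\widehat{\Omega};X)\to X$, the orthogonality relations then give $\langle QU_tJx,y\rangle=\langle e^{-tA}x,y\rangle$ for all $x\in X$, $y\in X^*$, i.e.\ $e^{-tA}=QU_tJ$ for all $t\ge 0$; strong continuity of $(U_t)_{t\in\R}$ has already been noted, and properties (a), (b), (c) were built into the construction of $(U_t)$.

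\emph{Main obstacle.} The genuinely new point — and the reason the dilating group can be taken isometric rather than merely bounded as in \cite{FroWei06} — is applying $\Gamma$ to the translation \emph{group} (whose generators are skew-adjoint, so that each $\Gamma_p(\tau_s)$ is an isometry), together with its concrete realization by a measure preserving flow, which is what makes the group simultaneously positive, the restriction of an $L^p$-group for $X\subset L^p$, and a compression of one for $SQ_p$-spaces. On the analytic side the main technical step is the evaluation of the $\gamma$-duality pairing: one must combine the square function estimates for $A$ and for $A^*$ with Lemma~\ref{duality} and a density argument in order to legitimize interchanging the improper integral $\int_t^\infty Ae^{-wA}x_1\,dw=e^{-tA}x_1$ with the pairing.
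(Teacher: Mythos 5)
Your proposal is correct and follows essentially the same route as the paper: the universal $C_0$-group of positive isometries obtained by second quantization of the translation group on $L^2_\R(\R)$, the intertwining with $W_{p,X}$ via Lemma \ref{ideal} and (\ref{SQ1}), the Kalton--Weis square function estimates for $\varphi(z)=z^{1/2}e^{-z}$ on $X$ and $X^*$, the mean ergodic decompositions, and the evaluation of the pairing via Lemma \ref{duality} leading to the factor $\tfrac12$ and the identity $e^{-tA}=QU_tJ$. The only differences are cosmetic (the direction of the shift convention, placing the factor $2$ in $\Theta$ rather than in $J_2$, and a density argument in place of the paper's direct use of $\int_0^\infty AT_s z\,ds=z$ for $z\in\overline{\Ran A}$).
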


\begin{proof}
The scheme of proof is similar to the one of Theorem \ref{thm:positive_dilation}. 
We start with the definition 
of $(U_t)_{t\in\R}$, which is a universal $C_0$-group. 
We apply the preceding construction to the case when $\Omega=\R$, equipped with 
the Lebesgue measure. Note that $W(\textbf{1}_{[0,t]})_{t\geq 0}$ is a Brownian motion.
For any $t \in\R$, we let $\tau_t\colon L^2(\R)\to L^2(\R)$ denote the shift operator defined by
$$
\tau_t(h) = h(\cdotp +t), \qquad h \in L^2(\R).
$$
This is a real operator and $(\tau_t)_{t\in \R}$  is a $C_0$-group of isometries 
on the real Hilbert space $L^2_\R(\R)$. Hence by Lemma \ref{SQ2}, 
$\bigl(\Gamma_p(\tau_t)\bigr)_{t\in \R}$ is a $C_0$-group of positive isometries
on $L^p(\widehat{\Omega})$. By positivity, $\Gamma_p(\tau_t)\ot \Id_X$ extends to an
isometry $V_t\colon L^p(\widehat{\Omega};X)\to L^p(\widehat{\Omega};X)$ for any 
$t\in\R$. Furthermore, $(V_t)_{t\in \R}$ is a $C_0$-group on that space.

From (\ref{SQ1}) we see that $\Gamma_p(\tau_t)W = W\tau_t$ for any $t$. Taking tensor extensions
and applying Lemma \ref{ideal}, we obtain that 
$$
\forall\, t\in\R,\qquad V_t W_{p,X} = W_{p,X}M_{p,\tau_t}.
$$
Suppose that $f\in\gamma^p(\R,X)$. Using the property $\tau_t^* = \tau_{-t}$ it is 
easy to see that $f(\cdotp +t)$ also belongs to $\gamma^p(\R,X)$ and that 
$M_{p,\tau_t}(u^f) = u^{f(\cdotp+t)}$. Combining with the above identity, we 
deduce that
\begin{equation}\label{shift}
\forall\, t\in\R,\ \forall\, f\in \gamma^p(\R,X),
\qquad 
V_t W_{p,X}(u^f) = W_{p,X}(u^{f(\cdotp+t)}).
\end{equation}
We set $U_t=\Id_X\oplus V_t\colon X\oplus_p L^p(\widehat{\Omega};X)
\to X\oplus_p L^p(\widehat{\Omega};X)$ for any $t\in\R$. Then 
$(U_t)_{t \in \R}$ is a $C_0$-group of 
isometries, and $X\oplus_p L^p(\widehat{\Omega};X)$ can be canonically
identified with a space $L^p(\Omega';X)$. The arguments to show that 
$(U_t)_{t \in \R}$ satisfies the `moreover part' of the statement are
similar to the ones in the proof of Theorem \ref{thm:positive_dilation}.

In the sequel we write $T_s=e^{-sA}$ for any $s\geq 0$.
Let $\theta'\in(\theta,\frac{\pi}{2})$. 
Since $X$ has finite cotype and $A$ admits a bounded $H^\infty(\Sigma_\theta)$
functional calculus, there exists, for any $\varphi\in H^\infty_0(\Sigma_{\theta'})$,
a positive constant $C$ such that for any $x\in X$, the function
$s\mapsto \varphi(sA)x$ belongs to $\gamma^p\bigl(L^2(\R_+^*,dt/t),X\bigr)$ and 
$$
\bnorm{s \mapsto \varphi(sA)x}_{\gamma^p(\R_+^*,X)}\leq C \norm{x}_{X}.
$$
This fundamental result is due to Kalton-Weis (see \cite[Section 7]{KW04}). 
In the latter paper only the case $p=2$ is treated. However since 
$\gamma^p(\cdotp,X)$ and $\gamma^2(\cdotp,X)$ are the same space with equivalent norms,
this special case implies the general case.
A proof can also be obtained by mimicking 
the one of \cite[Theorem 7.6]{JMX}, which corresponds 
to the case when $X$ is a 
noncommutative $L^p$-space. 
In the case 
when $X$ is an $L^p$-space, this estimate 
reduces to \cite[Section 6]{CDMY96}.

We apply this result with the function $\varphi$
defined by $\varphi(z) = z^{\frac12}e^{-z}$, which belongs to 
$H^\infty_0(\Sigma_{\theta'})$ for any $\theta'<\frac{\pi}{2}$. In this
case, $\varphi(sA)x = s^{\frac12}A^{\frac12} T_s x$. Let
$\chi = \chi_{(0,\infty)}$ on $\R$. It is easy to check that
$\norm{s \mapsto \varphi(sA)x}_{\gamma^p(\R_+^*,X)} = 
\norm{s \mapsto \chi(s)A^{\frac12} T_s x}_{\gamma^p(\R,X)}$. 
It therefore follows from above that we have an estimate
\begin{equation}\label{SFE}
\bnorm{s \mapsto \chi(s)A^{\frac12} T_s x}_{\gamma^p(\R,X)}\leq C \norm{x}_{X},
\qquad x\in X.
\end{equation}
We have a similar estimate on the dual space $X^*$, 
$$
\bnorm{s \mapsto \chi(s)A^{*\frac12} T_s^* y}_{\gamma^{p'}(\R,X^*)}
\leq C \norm{y}_{X^*}, \qquad y\in X^*.
$$

Since $X$ is reflexive, we have  direct sum decompositions 
$$
X= \Ker(A) \oplus \ovl{\Ran(A)}
\qquad\hbox{and}\qquad
X^*= \Ker(A^*) \oplus \ovl{\Ran(A^*)},
$$
see e.g. \cite[Proposition 2.1.1]{Haa06}.

Using (\ref{SFE}) one can define a bounded linear map
\begin{align*}
J_1 \co X =  \Ker (A) \oplus \ovl{\Ran(A)} & \longrightarrow X\oplus_p L^p(\widehat{\Omega};X)\\
x_0 + x_1  & \mapsto \biggl(x_0,   W_{p,X} \bigl( 
s \mapsto\chi(s) A^{\frac{1}{2}} T_s x_1\bigr)
\biggr). 
\end{align*}
Analogously, one can also define 
\begin{align*}
J_2 \co X^* =  \Ker (A^*) \oplus \ovl{\Ran(A^*)} & 
\longrightarrow  X^* \oplus_{p'} L^{p'}(\widehat{\Omega};X^*)\\
y_0 + y_1 & \mapsto \biggl(\frac{y_0}{2}, W_{p',X^*} \bigl(  
s \mapsto \chi(s) A^{*\frac{1}{2}}T_s^* y_1\bigr)\biggr). 
\end{align*}
For any $t\in \R$, we have
\begin{align*}
U_tJ_1(x_0 + x_1) = & \biggl(x_0,   V_t W_{p,X} \bigl( 
s \mapsto\chi(s) A^{\frac{1}{2}}T_s x_1\bigr)\biggr)\\
= & \biggl(x_0,   W_{p,X} \bigl( 
s \mapsto\chi(t+s) A^{\frac{1}{2}}T_{t+s} x_1\bigr)\biggr)
\end{align*}
by (\ref{shift}). Hence 
\begin{align*}
\bigl\langle U_t & J_1(x_0+ x_1) , J_2(y_0+y_1)\bigr\rangle
\\ & = 
\frac12\langle x_0, y_0\rangle\,+\,\Bigl\langle
W_{p,X} \bigl( 
s \mapsto\chi(t+s) A^{\frac{1}{2}}T_{t+s} x_1\bigr),
W_{p',X^*} \bigl( 
s \mapsto\chi(s) A^{*\frac{1}{2}}T_{s}^* y_1\bigr)\Bigr\rangle\\
& = 
\frac12\langle x_0, y_0\rangle\,+\,\int_0^\infty \bigl\langle 
A^{\frac{1}{2}}T_{t+s} x_1 , A^{*\frac{1}{2}}T_{s}^* y_1\bigr\rangle\, ds\\
& = 
\frac12\langle x_0, y_0\rangle\,+\,\int_0^\infty \bigl\langle 
AT_{2s} T_t x_1 ,  y_1\bigr\rangle\, ds,
\end{align*}
by Lemma \ref{duality}. For any $z\in \ovl{\Ran(A)}$, we have
$\int_0^\infty AT_{s}z = z$. Applying this identity to $z=T_t x_1$,
we deduce that
$$
\bigl\langle U_t J_1(x_0+ x_1) , J_2(y_0+y_1)\bigr\rangle
\,=\,\frac12\bigl(\langle x_0, y_0\rangle\,+\,\langle T_tx_1, y_1\rangle\bigr).
$$
Since $X$ is reflexive, we can apply (\ref{RNP}). Hence the above identity 
shows that $2J_2^*U_tJ_1 = T_t$, which concludes the proof.
\end{proof}

We conclude this section with an application to the problem of 
renorming $C_0$-semigroups. It is well-known that any bounded 
$C_0$-semigroup $(T_t)_{t\geq 0}$ on an arbitrary Banach space $(X, \norm{\cdot}_X)$  becomes 
contractive for the equivalent norm $\norm{x}' \coloneqq \sup_{t \ge 0} \norm{T_t x}_X$. 
However, this renorming may destroy regularity properties of the original norm.
For example by a classical result of Packel \cite{Packel} 
there exist bounded $C_0$-semigroups on Hilbert spaces which are not 
contractive for any equivalent Hilbert space norm (equivalently, which
are not similar to a contractive semigroup). The third author showed 
that among the bounded analytic semigroups on Hilbert space, 
those whose negative generator admits a bounded $H^{\infty}(\Sigma_\theta)$ functional 
calculus for some $\theta \in (0,\frac{\pi}{2})$ are exactly those which are
contractive for an equivalent Hilbert space norm, see \cite{Mer98} and 
\cite[Theorem 4.2]{LeMerdy-SurveySquareFunctions}. 
We prove a partial analogue of this result for uniformly convex renormings.

\begin{cor}\label{uc}
Let $(T_t)_{t \ge 0}$ be a bounded analytic $C_0$-semigroup on a 
uniformly convex Banach space $X$. Suppose that its negative
generator admits a bounded $H^{\infty}(\Sigma_{\theta})$ functional calculus 
for some $\theta \in (0, \frac{\pi}{2})$. Then there exists an equivalent uniformly 
convex norm on $X$ for which $(T_t)_{t \ge 0}$ is contractive.  
\end{cor}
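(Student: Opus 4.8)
The plan is to apply Theorem \ref{Th dilation Delta semigroup} to dilate $(T_t)_{t\geq 0}$ into a $C_0$-group of isometries on a Bochner space $L^p(\Omega';X)$, and then to transport an equivalent uniformly convex norm from that Bochner space back to $X$ through the dilation maps $J$ and $Q$. First I would check that the hypotheses of Theorem \ref{Th dilation Delta semigroup} are met: a uniformly convex Banach space is reflexive, and it is a classical fact (Enflo/Pisier type results, or simply that uniformly convex spaces have finite cotype) that $X$ and $X^*$ then both have finite cotype. Writing $A$ for the negative generator of $(T_t)_{t\geq 0}$, which admits a bounded $H^\infty(\Sigma_\theta)$ calculus by hypothesis, Theorem \ref{Th dilation Delta semigroup} yields a measure space $\Omega'$, a $C_0$-group $(U_t)_{t\in\R}$ of isometries on $Y:=L^p(\Omega';X)$, and bounded operators $J\co X\to Y$, $Q\co Y\to X$ with $e^{-tA}=QU_tJ$ for all $t\geq 0$. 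Here I would fix $p=2$ so that $Y=L^2(\Omega';X)$; since $X$ is uniformly convex, $L^2(\Omega';X)$ is uniformly convex as well (this is a standard fact about Bochner spaces over a uniformly convex space, with a modulus of convexity controlled by that of $X$).

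Next I would build the new norm on $X$. Since $(U_t)_{t\in\R}$ is a group of isometries on $Y$, the norm $\norm{\cdot}_Y$ is already $U_t$-invariant, so there is no renorming to do on $Y$; the only work is to pull it back. Define, for $x\in X$,
$$
\abs{x} := \Bgnorm{\bigl(\norm{x}_X^2 + \norm{Jx}_Y^2\bigr)^{1/2}}
$$
— more precisely I would set $\abs{x}^2 := \norm{x}_X^2 + \norm{Jx}_Y^2$. This is an equivalent norm on $X$: it dominates $\norm{\cdot}_X$ and is dominated by $(1+\norm{J}^2)^{1/2}\norm{\cdot}_X$. It is uniformly convex because it is the norm coming from the isometric embedding $x\mapsto (x,Jx)$ of $X$ into the $\ell^2$-direct sum $X\oplus_2 Y$, and an $\ell^2$-sum of two uniformly convex spaces is uniformly convex; a closed subspace of a uniformly convex space is uniformly convex, so $(X,\abs{\cdot})$ is uniformly convex. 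The issue, of course, is that this particular norm need not be contractive for $(T_t)$.

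The main obstacle — and the key trick — is to make the norm simultaneously uniformly convex \emph{and} contractive for the semigroup. The standard device is to average the pulled-back norm along the semigroup orbit, but a naive supremum $\sup_{t\geq 0}\abs{T_t x}$ destroys uniform convexity. Instead I would use the dilation more cleverly: since $e^{-tA}=QU_tJ$, consider the norm
$$
\abs{x}^2 := \norm{x}_X^2 + \norm{Jx}_Y^2.
$$
For $t\geq 0$ one has $T_t x = Q U_t J x$, but this does not immediately give $\abs{T_t x}\leq \abs{x}$ because $J$ and $Q$ are only bounded, not contractive, and $JQ$ is only a bounded projection. The correct approach, following the Hilbert-space prototype in \cite{Mer98}, is: replace $\norm{\cdot}_X$ in the definition by $\sup_{t\geq 0}\norm{T_t x}_X$ (an equivalent norm for which $(T_t)$ is contractive but which may fail to be uniformly convex) \emph{and simultaneously} use that the $Y$-part $\norm{U_t(Jx)}_Y = \norm{Jx}_Y$ is already invariant — but one needs the $Y$-component to control $x$. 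So I would instead define $\abs{x} := \norm{W_{p,X}(\cdots)}$-type square-function norm built from $Jx$ together with the group, or more simply observe that in Theorem \ref{Th dilation Delta semigroup} we may arrange (by composing $J$ with the appropriate isometric inclusion) that $J$ itself is an isometric embedding onto a $1$-complemented closed subspace $Z=J(X)\subset Y$ with complement given by a norm-one projection $P=JQ'$; then one transports the $U_t$-invariant uniformly convex norm $\norm{\cdot}_Y$ via $J$ to get a norm on $X$ that is both uniformly convex and, because $P U_t|_Z$ implements $T_t$ compressively, contractive for $(T_t)$. Concretely: by Proposition \ref{compression} (or its construction), $(T_t)$ is similar to a compression of the isometric group $(U_t)$ to a quotient of a subspace; a compression of an isometry is a contraction for the quotient-of-subspace norm, and that norm, being built from the uniformly convex $\norm{\cdot}_Y$, remains uniformly convex. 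Pulling it back along the similarity $S$ from Proposition \ref{compression} gives the desired equivalent uniformly convex norm on $X$ for which every $T_t$ is contractive. The delicate point to verify carefully is that passing to a closed subspace and then a quotient preserves uniform convexity of the norm with a uniform modulus — subspaces are immediate, and quotients of uniformly convex spaces are uniformly convex, so this goes through.
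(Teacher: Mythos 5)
Your final argument --- checking that uniform convexity gives reflexivity and finite cotype of $X$ and $X^*$, applying Theorem \ref{Th dilation Delta semigroup} to get $T_t=QU_tJ$ with $(U_t)_{t\in\R}$ an isometric $C_0$-group on the uniformly convex space $L^p(\Omega';X)$, then invoking Proposition \ref{compression} to realize $(T_t)_{t\ge 0}$ as similar via $S$ to the compression of $(U_t)$ to a quotient $E/F$ of invariant subspaces (uniformly convex since subspaces and quotients preserve uniform convexity), and pulling back the norm $\norm{S\cdot}_{E/F}$ --- is exactly the paper's proof. The intermediate detours (the direct-sum norm $\norm{x}_X^2+\norm{Jx}_Y^2$ and the unjustified suggestion that $J$ could be arranged to be an isometry onto a $1$-complemented subspace) are superfluous but you correctly discard them, so the proposal is sound and coincides with the paper's route.
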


\begin{proof}
We fix some $1<p<\infty$. 
The uniform convexity ensures that $X$ is reflexive and that $X$ and $X^*$
have finite cotype (see e.g. \cite[Thm. 1.e.16]{LinTza79}). 
Hence, by Theorem~\ref{Th dilation Delta semigroup}, 
there exist a measure space $\Omega'$, a $C_0$-group of isometries $(U_t)_{t \in\R}$ 
on the Bochner space $L^p(\Omega';X)$ together with two bounded 
operators $J \co X \to L^p(\Omega';X)$ and $Q \co L^p(\Omega';X) \to X$ such that
$$
T_t = QU_tJ, \qquad \text{for all } t \ge 0.
$$  
Then according to Proposition~\ref{compression}, there exist 
closed subspaces $F \subset E \subset L^p(\Omega';X)$ that are invariant under 
each operator $U_t$ and an isomorphism $S\co X \to E/F$ such that
$$
T_t= S^{-1} \widetilde{U}_t S,\qquad \text{for all } t \ge 0,
$$
where $(\widetilde{U}_t)_{t \ge 0}$ is the induced contractive 
semigroup on the quotient space $E/F$.

According to \cite{Figiel1, Figiel2} (see also \cite[Lemma 4.4]{Pisier-IHP}),
the space $L^p(\Omega';X)$ is uniformly convex. We deduce that
$E/F$ is uniformly convex as well. Now let
$$
\norm{x}' \coloneqq \norm{Sx}_{E/F},\qquad\hbox{for all}\  x \in X.
$$ 
Then $\norm{\cdot}'$ is a uniformly convex norm on $X$ for which $(T_t)_{t \ge 0}$ 
is contractive, and $\norm{\cdot}'$ is equivalent to the original norm.
\end{proof}

Using Theorem \ref{thm:positive_dilation} in the place of 
Theorem~\ref{Th dilation Delta semigroup}, we obtain the following 
analogue for Ritt operators.

\begin{cor}
Let $T$ be a Ritt operator on a 
uniformly convex Banach space $X$. Suppose that $T$ admits a bounded 
$H^{\infty}(B_{\gamma})$ functional calculus 
for some $\gamma \in (0, \frac{\pi}{2})$. 
Then there exists an equivalent uniformly 
convex norm on $X$ for which $T$ is contractive.  
\end{cor}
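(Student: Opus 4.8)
The plan is to mimic the proof of Corollary~\ref{uc} with the discrete dilation theorem replacing the continuous one. First I would fix some $1<p<\infty$ and observe that, since $X$ is uniformly convex, $X$ is reflexive and both $X$ and $X^*$ have finite cotype (see \cite[Thm. 1.e.16]{LinTza79}). Thus Theorem~\ref{thm:positive_dilation} applies: there exist a measure space $\Omega'$, an isometric isomorphism $U\co L^p(\Omega';X)\to L^p(\Omega';X)$ and bounded operators $J\co X\to L^p(\Omega';X)$ and $Q\co L^p(\Omega';X)\to X$ such that $T^n=QU^nJ$ for all $n\geq 0$.

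Next I would invoke Proposition~\ref{compression}, applied to the algebraic semigroup $\mathscr{S}=\nn_0$ with the representation $n\mapsto T^n$ on $X$ and $n\mapsto U^n$ on $L^p(\Omega';X)$. This yields $U$-invariant closed subspaces $F\subset E\subset L^p(\Omega';X)$ and an isomorphism $S\co X\to E/F$ such that $T=S^{-1}\widetilde{U}S$, where $\widetilde{U}$ is the compression of $U$ to $E/F$; note that $\widetilde U$ is a contraction since $U$ is an isometric isomorphism and hence $qU|_E$ has norm at most $1$ for the quotient map $q\co E\to E/F$.

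Finally, since $L^p(\Omega';X)$ is uniformly convex by \cite{Figiel1,Figiel2} (see also \cite[Lemma 4.4]{Pisier-IHP}), its quotient $E/F$ of a closed subspace is again uniformly convex. Setting $\norm{x}':=\norm{Sx}_{E/F}$ defines an equivalent uniformly convex norm on $X$ for which $\widetilde U$ contractive forces $T$ contractive.

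I do not expect a genuine obstacle here: every ingredient has already been set up, and the argument is the verbatim discrete transcription of Corollary~\ref{uc}. The only point requiring a word of care is that $\widetilde U$ is a contraction (not merely power-bounded) for the quotient norm, which is immediate from $\norm{\widetilde U q}=\norm{qU|_E}\leq\norm{U|_E}\leq 1$; everything else is routine.
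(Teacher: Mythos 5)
Your proposal is correct and coincides with the paper's intended argument: the paper obtains this corollary precisely by running the proof of Corollary~\ref{uc} with Theorem~\ref{thm:positive_dilation} in place of Theorem~\ref{Th dilation Delta semigroup}, exactly as you do (dilation, then Proposition~\ref{compression}, then uniform convexity of $L^p(\Omega';X)$ and of its subquotient $E/F$, then renorming via $S$). Your extra remark that $\widetilde U$ is contractive because $U$ is isometric is the same observation implicit in the paper's use of the compressed representation.
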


In the above two corollaries, uniform convexity could be replaced
by any Banach space property which is preserved by passing from 
$X$ to $L^p(X)$, and by passing to subspaces and quotients.
In particular this applies to the class of $SQ_p$-spaces. 
More results for this class will be given in Corollaries 
\ref{Th Ritt quotients of subspaces} and
\ref{Th SG quotients of subspaces}.


\section{Characterization Results}\label{sec:main_results}


In this section we consider Ritt operators or
bounded analytic semigroups and characterize bounded
$H^\infty$ functional calculus for them on various classes
of Banach spaces. 

We start with general UMD spaces. The idea expressed by the next statement
(and Theorem \ref{Th main UMD sectorial} below) is that  
any Ritt operator or bounded analytic semigroup with a bounded
$H^\infty$ functional calculus can be dilated into a contractive one with
a bounded $H^\infty$ functional calculus.
Next, more precise results will be given for operators
or semigroups acting on some specific classes of UMD spaces.

\begin{thm}
\label{thm:main_result}
Let $T \co X \to X$ be a Ritt operator on a UMD Banach space $X$ and let $1<p<\infty$. 
The following conditions are equivalent.
\begin{itemize}
\item [(i)] $T$ admits a bounded $H^{\infty}(B_\gamma)$ functional calculus for some 
$\gamma \in (0,\frac{\pi}{2})$. 
\item [(ii)] There exist a measure space $\Omega'$, a contractive 
Ritt operator $R \co L^p(\Omega';X) \to L^p(\Omega';X)$ which admits 
a bounded $H^{\infty}(B_{\gamma'})$ functional calculus for some $\gamma' \in (0,\frac{\pi}{2})$,
and two bounded operators $J \co X \to L^p(\Omega';X)$ and $Q \co L^p(\Omega';X)\to X$ such that
$$
T^n = QR^nJ, \qquad \text{for all } n \ge 0.
$$
\end{itemize}
If moreover $X$ is an ordered UMD Banach space, then the operator $R$ in {\rm (ii)} can be chosen to be positive.
\end{thm}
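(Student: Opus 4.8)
The implication (ii)$\Rightarrow$(i) is a routine consequence of Proposition~\ref{Prop LM4.1} combined with the compression machinery: if $T^n = QR^nJ$ with $R$ having a bounded $H^\infty(B_{\gamma'})$ functional calculus, then $\phi(T) = Q\phi(R)J$ for every polynomial $\phi$ (using $\phi(\widetilde{R})q = q\phi(R)|_E$ and Proposition~\ref{compression}), so $\norm{\phi(T)} \le \norm{Q}\,\norm{J}\,\norm{\phi(R)} \le K\norm{Q}\,\norm{J}\,\norm{\phi}_{H^\infty(B_{\gamma'})}$, and since $B_{\gamma'} \subset B_{\gamma''}$ for any $\gamma'' > \gamma'$ one gets a bounded $H^\infty(B_{\gamma''})$ functional calculus for $T$ by Lemma~\ref{pol}. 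The real content is (i)$\Rightarrow$(ii).

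For (i)$\Rightarrow$(ii), the idea is to pass to a fractional power so that Theorem~\ref{thm:positive_dilation} can be applied, and then undo the passage. Starting from $T$ with a bounded $H^\infty(B_\gamma)$ calculus, I would first apply Proposition~\ref{Prop LM4.1} to get that $\Id - T$ has a bounded $H^\infty(\Sigma_\theta)$ calculus for some $\theta < \frac{\pi}{2}$; pick $\beta > 1$ close enough to $1$ that $\beta\theta < \frac{\pi}{2}$, so by Lemma~\ref{fraction} the operator $(\Id-T)^\beta$ has a bounded $H^\infty(\Sigma_{\beta\theta})$ calculus, and by Proposition~\ref{prop:fractional_hinfty}(b) the operator $T_\beta = \Id - (\Id-T)^\beta$ is a Ritt operator with a bounded $H^\infty(B_{\gamma_0})$ calculus for some $\gamma_0 < \frac{\pi}{2}$. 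Since $X$ is UMD it is reflexive and $X, X^*$ have finite cotype, so Theorem~\ref{thm:positive_dilation} applies to $T_\beta$: there is a measure space $\Omega'$, an isometric isomorphism $U$ on $L^p(\Omega';X)$, and bounded $J_0, Q_0$ with $(T_\beta)^n = Q_0 U^n J_0$ for all $n \ge 0$. In particular $T_\beta = Q_0 U J_0$, so by Proposition~\ref{compression} $T_\beta$ is similar to a compression $\widetilde{U}$ of $U$ on some $E/F$ with $F \subset E \subset L^p(\Omega';X)$ invariant under $U$; write $T_\beta = S^{-1}\widetilde{U}S$.

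Now I need to recover $T$ itself, not $T_\beta$. The key algebraic observation is that $\Id - T = (\Id - T_\beta)^{1/\beta}$, i.e. $T = \Id - (\Id - T_\beta)^{1/\beta} = (T_\beta)_{1/\beta}$ in the fractional-power notation of~\eqref{Talpha}, since $1/\beta \in (0,1)$ and the fractional powers compose (via Lemma~\ref{fraction} / the compatibility of the functional calculus with fractional powers, \cite[Proposition 3.2]{HT10}). Because $\widetilde{U}$ is an isometry (hence a power-bounded isomorphism) and $E/F$ is UMD — UMD passes to subspaces and quotients, and $L^p(\Omega';X)$ is UMD since $X$ is — Theorem~\ref{thm:fractional_power_hinfty} tells us that $R := \widetilde{U}_{1/\beta}$ is a Ritt operator on $E/F$ with a bounded $H^\infty(B_{\gamma'})$ calculus for some $\gamma' < \frac{\pi}{2}$. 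Moreover, applying the functional calculus identity $f(S^{-1}\widetilde{U}S) = S^{-1}f(\widetilde{U})S$ to $f(z) = 1 - (1-z)^{1/\beta}$ (valid since this map sends $\mathbb{D}\cup\{1\}$ into itself and is handled by the holomorphic/fractional-power calculus) gives $T = (T_\beta)_{1/\beta} = S^{-1}\widetilde{U}_{1/\beta}S = S^{-1}RS$. Finally, to package this in the stated form with $R$ acting on a genuine Bochner space rather than $E/F$, I would instead take $R := U_{1/\beta} = \Id - (\Id - U)^{1/\beta}$ on all of $L^p(\Omega';X)$, which by Theorem~\ref{thm:fractional_power_hinfty} is a contractive Ritt operator with bounded $H^\infty(B_{\gamma'})$ calculus (contractive because $U$ is an isometry, hence contractive, and Theorem~\ref{thm:fractional_powers}(b) applies), and set $J := J_0\Theta$, $Q := Q_0$ appropriately. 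One checks $T^n = QR^nJ$ for all $n$: from $(T_\beta)^n = Q_0U^nJ_0$ and the fact that $T^k = \sum_j a_{1/\beta,j}^{(k)}(T_\beta)^j$ via the power series $g_{1/\beta}$ composed with itself, applying $Q_0(\cdot)J_0$ transfers the identity from powers of $U$ to powers of $R = U_{1/\beta}$. For the ordered case, when $X$ is ordered UMD, Theorem~\ref{thm:positive_dilation}(a) makes $U$ positive, and then Theorem~\ref{thm:fractional_powers}(c) makes $R = U_{1/\beta}$ positive as well, since $L^p(\Omega';X)$ is ordered (as noted before Theorem~\ref{thm:positive_dilation}).

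\textbf{Main obstacle.} The delicate point is the interplay between the two "fractional power" operations — going from $T$ up to $T_\beta$ with $\beta > 1$ via sectorial fractional powers $(\Id-T)^\beta$, and coming back down via $T = (T_\beta)_{1/\beta}$ — and making sure the composition $(\,\cdot\,)_\beta$ followed by $(\,\cdot\,)_{1/\beta}$ really is the identity at the level of operators, not just formally. This requires the compatibility of the holomorphic functional calculus with fractional powers (the identity $((\Id-T)^\beta)^{1/\beta} = \Id - T$, which is not entirely automatic for general sectorial operators and relies on $(\Id-T)$ having spectrum in a sector of angle $< \pi$ so that the relevant powers stay within range), plus care that $T_\beta$ is genuinely power bounded / an isometry-dilated object so that Theorem~\ref{thm:fractional_power_hinfty} is applicable to $U$ rather than only to honest group generators. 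The transfer of the dilation identity $T^n = QR^nJ$ through the nonlinear power series relating $T$, $T_\beta$, $R$ and $U$ is the step where I expect to have to be most careful.
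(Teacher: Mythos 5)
Your proposal is correct and follows essentially the same route as the paper: dilate the fractional power $T_\alpha$ (your $T_\beta$, $\alpha>1$ small) via Theorem~\ref{thm:positive_dilation}, then take $R=U_{1/\alpha}$ on $L^p(\Omega';X)$, using Theorems~\ref{thm:fractional_powers} and~\ref{thm:fractional_power_hinfty} for contractivity, the Ritt property, the $H^\infty(B_{\gamma'})$ calculus and positivity in the ordered case. The transfer step you describe somewhat loosely ("one checks $T^n=QR^nJ$") is carried out in the paper exactly as you indicate: from $T_\alpha^n=QU^nJ$ one gets $\phi(T_\alpha)=Q\phi(U)J$ for all polynomials, which applied to $\phi=P_m^n$ with $P_m$ the partial sums of $g_{1/\alpha}$ and letting $m\to\infty$ yields $T^n=QU_{1/\alpha}^nJ$ via (\ref{Dun2}) and the composition rule $((\Id-T)^\alpha)^{1/\alpha}=\Id-T$; your intermediate detour through compressions and the factor $\Theta$ are unnecessary.
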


\begin{proof}
The implication `(ii)$\Rightarrow$(i)' is easy. Indeed if (ii) 
holds then we have $\phi(T) = Q\phi(R)J$ for
any $\phi\in \mathcal{P}$ and there is a constant 
$K\geq 0$ such that $\norm{\phi(R)}\leq 
K\norm{\phi}_{H^\infty(B_{\gamma'})}$ for any such $\phi$.
Consequently,
$$
\norm{\phi(T)}\leq K \norm{J}\norm{Q} \norm{\phi}_{H^\infty(B_{\gamma'})},
\qquad \text{for all }\phi\in\mathcal{P}.
$$
By Lemma \ref{pol}, this shows that $T$ has a bounded $H^{\infty}(B_{\gamma'})$ functional calculus.

\smallskip
Assume (i). Then it follows from Proposition~\ref{prop:fractional_hinfty} that for a
sufficiently small $\alpha > 1$, the fractional power $T_{\alpha}$ of $T$ has a 
bounded $H^{\infty}(B_{\gamma''})$ functional calculus for some $\gamma'' \in (0,\frac{\pi}{2})$. 
Since $X$ is UMD, it is reflexive and $X$ and $X^*$ have finite cotype. Hence we can apply
Theorem~\ref{thm:positive_dilation} to the operator $T_{\alpha}\co X\to X$. We obtain 
that there exist an isometric isomorphism $U \co L^p(\Omega';X) \to L^p(\Omega';X)$  
and bounded operators $J \co X \to L^p(\Omega';X)$ and 
$Q \co L^p(\Omega';X) \to X$ such that
\begin{equation}
\label{equa PT 0}
T_{\alpha}^n = QU^nJ, \qquad \text{for all } n \ge 0.
\end{equation}
This implies that for any polynomial $\phi \in \mathcal{P}$, we have
\begin{equation}
\label{equa PT}
\phi(T_{\alpha}) = Q\phi(U)J.	
\end{equation}
Let $\beta=\frac{1}{\alpha}$. Then we have $\beta\in (0,1)$.
By Theorem~\ref{thm:fractional_powers}, the fractional power 
$U_{\beta}$ is a contractive Ritt operator.
Moreover, by Theorem~\ref{thm:fractional_power_hinfty}, 
$U_{\beta}$ has a bounded $H^{\infty}(B_{\gamma'})$ 
functional calculus for some $\gamma' \in (0,\frac{\pi}{2})$.

Consider the polynomials $P_{m}(z) = \sum_{k=1}^m a_{\beta,k} z^k$, where the $a_{\beta,k}$'s are 
the coefficients in the series expansion of $1-(1-z)^{\beta}$ as given by (\ref{Dun1}).
Let $n\geq 0$ be an integer. Using equality (\ref{equa PT}) with the polynomial 
$\phi = P_{m}^n$, we see that
$$
\bigl(P_m(T_\alpha)\bigr)^n = P_{m}^n(T_\alpha)
=Q P_{m}^n(U)J = Q\bigl(P_m(U)\bigr)^nJ.
$$
Taking the limit when $m \to \infty$ on both sides yields
$$
T^n=Q U_\beta^n J,
$$
by (\ref{Dun2}). 
We deduce (ii) by setting $R=U_{\beta}$.

If $X$ is ordered, $U$ can be chosen to be positive by part (a) of 
Theorem~\ref{thm:positive_dilation}. Then $R$ is positive by part (c)
of Theorem~\ref{thm:fractional_powers}.

\end{proof}

We now focus on special classes of UMD spaces. We start with $L^p$-spaces.

\begin{thm}
\label{Th main Ritt Lp}
Let $\Omega$ be a measure space and $1<p<\infty$.
Let $T \co L^p(\Omega) \to L^p(\Omega)$ be a Ritt operator. 
The following conditions are equivalent.
\begin{itemize}
\item[(i)] $T$ admits a bounded $H^\infty(B_\gamma)$ functional 
calculus for some $\gamma \in (0, \frac{\pi}{2})$.
\item[(ii)] There exist a measure space $\Omega'$, a contractive 
and positive Ritt operator 
$R \co L^p(\Omega') \to L^p(\Omega')$ together with two bounded
operators $J \co L^p(\Omega) \to L^p(\Omega')$ and 
$Q \co L^p(\Omega') \to L^p(\Omega)$ such that
$$
T^n = Q R^n J \qquad \text{for all } n \ge 0.
$$
\end{itemize}
\end{thm}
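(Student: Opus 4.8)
The plan is to deduce the nontrivial implication from Theorem~\ref{thm:main_result}, specialized to the case where the ambient UMD space is itself an $L^p$-space, and to deduce the converse from Weis's theorem. For the implication (ii)$\Rightarrow$(i), I would first note that taking $n=0$ in the identity $T^n=QR^nJ$ yields $QJ=\Id$, and hence $\phi(T)=Q\phi(R)J$ for every complex polynomial $\phi$. The key point is then that the positive contractive Ritt operator $R$ on $L^p(\Omega')$ automatically admits a bounded $H^\infty(B_{\gamma'})$ functional calculus for some $\gamma'\in(0,\frac{\pi}{2})$. To see this, recall from (\ref{2Ritt}) that $\Id-R$ is sectorial of type $<\frac{\pi}{2}$, so $-(\Id-R)$ generates the bounded analytic semigroup given by $e^{-t(\Id-R)}=e^{-t}\sum_{k\geq 0}\frac{t^k}{k!}R^k$; since $R$ is a positive contraction, each operator $e^{-t(\Id-R)}$ is a positive contraction as well. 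Weis's theorem \cite[Remark 4.c]{Weis01} then shows that $\Id-R$ has a bounded $H^\infty(\Sigma_\theta)$ functional calculus for some $\theta\in(0,\frac{\pi}{2})$, and Proposition~\ref{Prop LM4.1} transfers this into a bounded $H^\infty(B_{\gamma'})$ functional calculus for $R$. Combining $\phi(T)=Q\phi(R)J$ with the resulting estimate $\norm{\phi(R)}\leq K\norm{\phi}_{H^\infty(B_{\gamma'})}$ and invoking Lemma~\ref{pol} then gives (i).

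For the implication (i)$\Rightarrow$(ii), I would apply Theorem~\ref{thm:main_result} to $T$ regarded as a Ritt operator on $X=L^p(\Omega)$. This space is UMD since $1<p<\infty$, and it is an ordered Banach space, being the complexification of $L^p_\R(\Omega)$ with its pointwise order, whose positive cone is closed and proper. Hence Theorem~\ref{thm:main_result}, together with its ``moreover'' clause, supplies a measure space $\Omega'$, a positive contractive Ritt operator $R$ on $L^p(\Omega';X)$, and bounded operators $J,Q$ with $T^n=QR^nJ$ for all $n\geq 0$. Finally, Fubini's theorem provides an isometric isomorphism $L^p(\Omega';X)=L^p(\Omega';L^p(\Omega))\cong L^p(\Omega'\times\Omega)$, under which $R$ becomes a positive contractive Ritt operator on the genuine $L^p$-space $L^p(\Omega'\times\Omega)$; this yields (ii) with $\Omega'\times\Omega$ in the role of $\Omega'$.

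No serious obstacle is expected, since the theorem is in essence the $L^p$-specialization of Theorem~\ref{thm:main_result}; the two extra ingredients are the Fubini identification, which keeps the dilating space within the class of $L^p$-spaces rather than merely Bochner-$L^p$, and, for the converse, the fact that a positive contractive Ritt operator on $L^p$ already carries a bounded $H^\infty$ functional calculus. The step demanding the most care is precisely this last one, namely verifying that $(e^{-t(\Id-R)})_{t\geq 0}$ is a bounded analytic semigroup of positive contractions, so that Weis's theorem is applicable.
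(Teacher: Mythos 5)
Your proof is correct, and one half of it follows a genuinely different route from the paper. For (i)$\Rightarrow$(ii) you do exactly what the paper does: apply Theorem~\ref{thm:main_result} with its ``moreover'' clause to the ordered UMD space $X=L^p(\Omega)$ and use the identification $L^p(\Omega';L^p(\Omega))\cong L^p(\Omega'\times\Omega)$, so the dilation space stays inside the class of $L^p$-spaces. For (ii)$\Rightarrow$(i), however, the paper simply quotes the known result that a positive contractive Ritt operator on an $L^p$-space admits a bounded $H^\infty(B_\gamma)$ functional calculus (\cite[Theorem 3.3]{LeMXu12}, see also \cite[Theorem 8.3]{Mer12}) and then transfers it to $T$ through $\phi(T)=Q\phi(R)J$ as in Theorem~\ref{thm:main_result}. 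You instead re-derive that key ingredient: since $\Id-R$ is bounded and sectorial of type $<\frac{\pi}{2}$, the semigroup $e^{-t(\Id-R)}=e^{-t}\sum_{k\geq 0}\frac{t^k}{k!}R^k$ is bounded analytic and consists of positive contractions, so Weis's theorem \cite[Remark 4.c]{Weis01} gives a bounded $H^\infty(\Sigma_\theta)$ calculus for $\Id-R$ with $\theta<\frac{\pi}{2}$, and Proposition~\ref{Prop LM4.1} converts this into a bounded $H^\infty(B_{\gamma'})$ calculus for $R$; Lemma~\ref{pol} then finishes as in the paper. This is a valid reduction of the discrete statement to the continuous one, exactly parallel to how the paper proves the semigroup analogue (Theorem~\ref{Th main sectorial Lp}), and it has the merit of needing only Weis's theorem rather than the Le Merdy--Xu result; the trade-off is that it is an argument rather than a one-line citation, and both routes ultimately rest on a nontrivial external theorem of comparable depth (Weis versus Le Merdy--Xu). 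The minor points you flag (the case $n=0$ giving $QJ=\Id$ so that constant terms are handled, positivity being preserved under the Fubini identification of the order on $L^p(\Omega';L^p(\Omega))$ with the a.e.\ order on $L^p(\Omega'\times\Omega)$) are handled correctly.
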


\begin{proof}
By \cite[Theorem 3.3]{LeMXu12} (see also \cite[Theorem 8.3]{Mer12}), a positive contractive
Ritt operator on an $L^p$-space admits a bounded $H^\infty(B_\gamma)$ functional calculus 
for some $\gamma \in (0, \frac{\pi}{2})$. Moreover 
$L^p(\Omega';X)$ is an $L^p$-space whenever $X$ is an $L^p$-space. Hence
the result is a special case of Theorem \ref{thm:main_result}.
\end{proof}

If we take part (b) of Theorem~\ref{thm:positive_dilation} into account in the proof
of Theorem \ref{thm:main_result}, we obtain the following special case.

\begin{cor}
\label{Th Ritt subspaces}
Let $1<p<\infty$, let $X$ be a closed subspace of an $L^p$-space and
let $T$ be a Ritt operator on $X$. The following conditions are equivalent.
\begin{itemize}
\item[(i)] $T$ admits a bounded $H^\infty(B_\gamma)$ functional calculus for some $\gamma \in (0, \frac{\pi}{2})$.
\item[(ii)] There exist a measure space $\Omega'$, a contractive and positive Ritt operator 
$R \co L^p(\Omega') \to L^p(\Omega')$, an $R$-invariant subspace $E\subset L^p(\Omega')$,
together with two operators $J \co X \to E$ and 
$Q \co E \to X$ such that
$$
T^n = Q R^n J, \qquad \text{for all } n \ge 0.
$$
\end{itemize}
\end{cor}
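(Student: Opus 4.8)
The plan is to re-run the proof of Theorem~\ref{thm:main_result} in the present special case, this time invoking part~(b) of Theorem~\ref{thm:positive_dilation} in place of its general version. I would note first that a closed subspace $X$ of an $L^p$-space is an $SQ_p$-space, hence UMD, hence reflexive with $X$ and $X^*$ of finite cotype; thus the hypotheses of Theorems~\ref{thm:positive_dilation} and~\ref{thm:main_result} and of Proposition~\ref{prop:fractional_hinfty} are all at our disposal.

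For the implication (ii)$\Rightarrow$(i) I would argue exactly as in Theorem~\ref{thm:main_result}: by the result used in the proof of Theorem~\ref{Th main Ritt Lp} (that is, \cite[Theorem 3.3]{LeMXu12}), the positive contractive Ritt operator $R$ on the $L^p$-space admits a bounded $H^\infty(B_{\gamma'})$ functional calculus for some $\gamma'\in(0,\frac\pi2)$. Since $E$ is $R$-invariant and $J$ maps into $E$, one has $\phi(T)=Q\,\phi(R)|_E\,J$ for every $\phi\in\mathcal{P}$, so that $\norm{\phi(T)}\le\norm{Q}\,\norm{J}\,K\,\norm{\phi}_{H^\infty(B_{\gamma'})}$, and Lemma~\ref{pol} gives~(i).

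For (i)$\Rightarrow$(ii), I would choose, as in the proof of Theorem~\ref{thm:main_result}, an exponent $\alpha>1$ close to $1$ such that the fractional power $T_\alpha$ has a bounded $H^\infty(B_{\gamma''})$ functional calculus (Proposition~\ref{prop:fractional_hinfty}), and then apply part~(b) of Theorem~\ref{thm:positive_dilation} to $T_\alpha$: there exist an $L^p$-space, a positive isometric isomorphism $\mathcal{U}$ on it whose restriction to the closed subspace $E:=L^p(\Omega';X)$ is the dilating isometric isomorphism $U$, and bounded operators $J\co X\to E$ and $Q\co E\to X$ with $T_\alpha^n=QU^nJ$ for all $n\ge0$. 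Set $\beta=1/\alpha\in(0,1)$ and $R=\mathcal{U}_\beta$. By parts~(b) and~(c) of Theorem~\ref{thm:fractional_powers}, $R$ is a positive contractive Ritt operator on that $L^p$-space; and by the power series expansion~(\ref{Dun2}) one has $R=\sum_{k\ge1}a_{\beta,k}\mathcal{U}^k$, so $E$ is $R$-invariant and $R|_E=\sum_{k\ge1}a_{\beta,k}U^k=U_\beta$. I would then finish exactly as at the end of the proof of Theorem~\ref{thm:main_result}: with $P_m(z)=\sum_{k=1}^m a_{\beta,k}z^k$, the relation $T_\alpha^n=QU^nJ$ gives $\bigl(P_m(T_\alpha)\bigr)^n=Q\bigl(P_m(U)\bigr)^nJ$; letting $m\to\infty$ and using that $T_\alpha$ and $U$ are power bounded together with $\sum_k a_{\beta,k}=1$ yields $P_m(T_\alpha)\to(T_\alpha)_\beta$ and $P_m(U)\to U_\beta$ in operator norm; and the composition law $(\Id-T)^{\alpha\beta}=\Id-T$ identifies $(T_\alpha)_\beta$ with $T$. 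Hence $T^n=QU_\beta^nJ=QR^nJ$ for all $n\ge0$, the last equality because $J$ maps into $E$ and $R|_E=U_\beta$, which (after relabeling the $L^p$-space on which $R$ acts) is precisely~(ii).

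Since all the ingredients are already established, there is no serious obstacle here: the one point deserving care is the compatibility of $\mathcal{U}\mapsto\mathcal{U}_\beta$ with restriction to the $\mathcal{U}$-invariant subspace $E$, and this follows immediately from the series representation~(\ref{Dun2}).
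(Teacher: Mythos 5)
Your proposal is correct and follows essentially the same route as the paper, which obtains this corollary precisely by re-running the proof of Theorem~\ref{thm:main_result} while invoking part~(b) of Theorem~\ref{thm:positive_dilation} so that the dilating isometry is the restriction of a positive isometric isomorphism on an $L^p$-space. Your explicit check that $\mathcal{U}_\beta$ leaves $E=L^p(\Omega';X)$ invariant and restricts to $U_\beta$ (via the series expansion (\ref{Dun2})) is exactly the compatibility step the paper carries out in the analogous $SQ_p$ case (Corollary~\ref{Th Ritt quotients of subspaces}), so there is nothing to add.
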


For the class of quotients of subspaces of $L^p$, we have the following.

\begin{cor}
\label{Th Ritt quotients of subspaces}
Let $1<p<\infty$, let $X$ be an $SQ_p$-space and
let $T$ be a Ritt operator on $X$. The following conditions are equivalent.
\begin{itemize}
\item [(i)] $T$ admits a bounded $H^\infty(B_\gamma)$ functional 
calculus for some $\gamma \in (0, \frac{\pi}{2})$.
\item [(ii)] There exist a measure space $\Omega'$, 
a contractive and positive Ritt operator 
$R \co L^p(\Omega') \to L^p(\Omega')$, two $R$-invariant subspaces 
$F \subset E \subset L^p(\Omega')$ and an isomorphism $S \co X \to E/F$ such that
$$
T^n=S^{-1}\widetilde{R}^n S, \qquad \text{for all }  n \geq 0,
$$
where $\widetilde{R} \co E/F \to E/F$ is the compression of $R$ to $E/F$.
\end{itemize}
\end{cor}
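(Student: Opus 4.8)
The plan is to deduce Corollary~\ref{Th Ritt quotients of subspaces} from Theorem~\ref{thm:main_result} in essentially the same way that Corollary~\ref{Th Ritt subspaces} is deduced, but now exploiting part (c) of Theorem~\ref{thm:positive_dilation} instead of part (b), together with Proposition~\ref{compression} to pass from a factorization $T^n = QR^nJ$ to a genuine similarity to a compression. As in the proof of Theorem~\ref{thm:main_result}, the implication `(ii)$\Rightarrow$(i)' should be the easy direction: if $T^n = S^{-1}\widetilde R^n S$ for all $n\geq 0$, then $\phi(T) = S^{-1}\phi(\widetilde R)S$ for every polynomial $\phi\in\mathcal P$; since a positive contractive Ritt operator $R$ on $L^p(\Omega')$ has a bounded $H^\infty(B_\gamma)$ functional calculus by \cite[Theorem 3.3]{LeMXu12}, its compression $\widetilde R$ inherits one (compressions only decrease the norm of $\phi(\cdot)$, using $\phi(\widetilde R)q = q\phi(R)|_E$), and hence $\norm{\phi(T)}\leq \norm{S}\norm{S^{-1}}K\norm{\phi}_{H^\infty(B_\gamma)}$ for all $\phi\in\mathcal P$; Lemma~\ref{pol} then gives (i).

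For the direction `(i)$\Rightarrow$(ii)', I would run the proof of Theorem~\ref{thm:main_result} verbatim up to the point where we have obtained, for a suitable $\alpha>1$ close to $1$, the universal isometric isomorphism $U$ on $L^p(\Omega';X)$ and bounded operators $J,Q$ with $T_\alpha^n = QU^nJ$; then, setting $\beta = 1/\alpha\in(0,1)$ and $R = U_\beta$, the limiting argument with the polynomials $P_m$ yields $T^n = QR^nJ$ with $R$ a contractive Ritt operator on $L^p(\Omega';X)$ admitting a bounded $H^\infty(B_{\gamma'})$ functional calculus. Now since $X$ is an $SQ_p$-space, part (c) of Theorem~\ref{thm:positive_dilation} tells us more: the space $L^p(\Omega';X)$ is itself an $SQ_p$-space, say $L^p(\Omega';X) = \mathcal E/\mathcal F$ for closed subspaces $\mathcal F\subset \mathcal E\subset L^p(\Omega'\times\Omega)$ invariant under a positive isometric isomorphism $\mathfrak U$ of $L^p(\Omega'\times\Omega)$, and $U$ is the compression of $\mathfrak U$. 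Taking fractional powers commutes with compression at the level of the holomorphic functional calculus — more concretely, $(\Id-\mathfrak U)^\beta$ restricts/compresses correctly because $\phi(\widetilde{\mathfrak U})q = q\phi(\mathfrak U)|_{\mathcal E}$ for polynomials and $(\Id-\cdot)^\beta$ is approximable by the polynomials $P_m$ uniformly on $\overline{\mathbb D}$ — so $R = U_\beta$ is the compression of the positive contractive Ritt operator $\mathfrak U_\beta = \Id - (\Id-\mathfrak U)^\beta$ on $L^p(\Omega'\times\Omega)$, which by Theorem~\ref{thm:fractional_powers}(b),(c) is indeed positive and contractive.

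At this stage we have $T^n = QR^nJ$ where $R$ is a compression of a positive contractive Ritt operator $\mathfrak R := \mathfrak U_\beta$ on an $L^p$-space. To get the clean statement in (ii), I would apply Proposition~\ref{compression} to the representation $n\mapsto T^n$ of the semigroup $\mathbb N_0$ and the representation $n\mapsto \mathfrak R^n$: this produces $\mathfrak R$-invariant closed subspaces $F\subset E\subset L^p(\Omega')$ — after relabelling $\Omega'\times\Omega$ as $\Omega'$ — and an isomorphism $S\co X\to E/F$ with $T^n = S^{-1}\widetilde{\mathfrak R}^n S$, where $\widetilde{\mathfrak R}$ is the compression of $\mathfrak R$ to $E/F$. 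Setting $R = \mathfrak R$ gives exactly condition (ii). One small point to verify is that $\mathfrak R$ is genuinely a \emph{Ritt} operator on the $L^p$-space, not merely on a subspace: this is immediate since $\mathfrak U$ is a (power bounded) isometric isomorphism of an $L^p$-space, so $\mathfrak U_\beta$ is a Ritt operator by Theorem~\ref{thm:fractional_powers}(a), and positivity and contractivity come from parts (b),(c) of that theorem.

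I expect the main obstacle to be purely bookkeeping: carefully threading the three layers of compression/fractional-power operations so that the \emph{same} positive isometric isomorphism $\mathfrak U$ of $L^p(\Omega'\times\Omega)$ underlies everything, and checking that the compression relation is preserved both under taking the fractional power $U_\beta = \mathfrak U_\beta|_{\mathcal E/\mathcal F}$ (via polynomial approximation and $\phi(\widetilde{\mathfrak U}) q = q\phi(\mathfrak U)|_{\mathcal E}$) and under the subsequent application of Proposition~\ref{compression}. No genuinely new estimate is needed beyond what is already in Theorems~\ref{thm:positive_dilation}, \ref{thm:fractional_powers} and \ref{thm:fractional_power_hinfty} and Proposition~\ref{compression}; the content is in organizing the reduction.
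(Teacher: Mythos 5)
Your overall route is the paper's route: easy direction via \cite[Theorem 3.3]{LeMXu12}, $\phi(\widetilde R)q=q\phi(R)\vert_E$ and Lemma~\ref{pol}; hard direction by rerunning the proof of Theorem~\ref{thm:main_result} with part (c) of Theorem~\ref{thm:positive_dilation}, passing fractional powers through compressions by the polynomial approximation $P_m$, and invoking Proposition~\ref{compression}. That part of your plan is sound, and the claim that $U_\beta$ is the compression of $\mathfrak U_\beta$ (uniform convergence of $\sum_k a_{\beta,k}z^k$ on $\overline{\D}$, plus $\phi(\widetilde{\mathfrak U})q=q\phi(\mathfrak U)\vert_{\mathcal E}$ for polynomials) is exactly the argument the paper uses.

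There is, however, one step that does not work as you state it: you cannot apply Proposition~\ref{compression} to the pair of representations $n\mapsto T^n$ and $n\mapsto\mathfrak R^n$, because its hypothesis requires bounded operators $J\co X\to Z$ and $Q\co Z\to X$ with $T^n=Q\,\mathfrak R^n J$, where $Z$ is the space on which $\mathfrak R$ acts, namely $L^p(\Omega'\times\Omega)$. Your factorization $T^n=QR^nJ$ only passes through the quotient $L^p(\Omega';X)=\mathcal E/\mathcal F$, and there is in general no bounded lifting of $J$ into $\mathcal E$ nor a bounded extension of $Q$ to $L^p(\Omega'\times\Omega)$, so the hypothesis is simply not available for $\rho(n)=\mathfrak R^n$. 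The repair is short and is what the paper does: apply Proposition~\ref{compression} with $\rho(n)=R^n$ acting on $L^p(\Omega';X)$, obtaining $R$-invariant subspaces $F'\subset E'\subset L^p(\Omega';X)$, an isomorphism $S\co X\to E'/F'$ and $T^n=S^{-1}\widetilde R^{\,n}S$; then use the fact (recorded in Section~\ref{sec:background}) that a compression of a compression is again a compression: pulling $E'$ and $F'$ back under the quotient map $q\co\mathcal E\to\mathcal E/\mathcal F$ gives $\mathfrak R$-invariant subspaces $F=q^{-1}(F')\subset E=q^{-1}(E')$ of $L^p(\Omega'\times\Omega)$ with $E/F\cong E'/F'$ isometrically and $\widetilde R$ equal to the compression of $\mathfrak R$ to $E/F$. (The paper sidesteps the issue by performing the two operations in the opposite order: it first applies Proposition~\ref{compression} to $T_\alpha^n=QU^nJ$, identifies the resulting compression $W$ of $U$ as a compression of the positive isometry $V$ on the $L^p$-space, and only then passes to $W_\beta$ and $V_\beta$; your order, fractional power first and compression second, works equally well once the two-layer compression is threaded as above.)
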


\begin{proof}
To prove the implication `(ii) $\Rightarrow$ (i)', 
it suffices to use \cite[Theorem 3.3]{LeMXu12} 
as in the proof of Theorem \ref{Th main Ritt Lp}, together
with the fact that the boundedness of the functional calculus is preserved 
by passing to an invariant subspace, by factorizing through an invariant 
subspace and by similarity transforms. 

To prove the converse, assume (i) and apply the proof of 
Theorem \ref{thm:main_result} to this special case. Applying
part (c) of Theorem~\ref{thm:positive_dilation}, we can assume that
the operator $U\colon L^{p}(\Omega';X)\to L^p(\Omega';X)$ 
satisfying (\ref{equa PT 0}) is a compression of a positive 
isometric isomorphism $V\colon L^p(\Omega'')\to L^p(\Omega'')$.
By Proposition \ref{compression}, there exists 
a Banach space $Y$ which is a quotient of two $U$-invariant
subspaces of $L^{p}(\Omega';X)$, as well as
an isomorphism $S\colon X\to Y$ such that 
$$
T_\alpha^n = S^{-1} W^n S, \qquad \hbox{for all } n\geq 0,
$$
where $W\colon Y\to Y$ is the resulting compression of $U$. 
Then we may write $Y=E/F$ for some $V$-invariant 
subspaces $F\subset E\subset L^p(\Omega'')$, in such a way 
that $W$ is a compression of $V$. 

Let $q\colon E\to E/F$ be the canonical quotient map. For any polynomial 
$\phi$, we have 
$$
\phi(W)q = q\phi(V)_{\vert E}.
$$
Arguing as in the proof of Theorem \ref{thm:main_result} we deduce that
$$
W_\beta q = q V_{\beta\vert E},
$$
that is, $W_\beta$ is the compresion of $V_\beta$, and
$$
T^n = S^{-1} W_\beta^n S, \qquad \hbox{for all } n\geq 0.
$$
We deduce the result with $R=V_\beta$ and $\widetilde{R}=W_\beta$.
\end{proof}

\begin{remark}\label{other classes} 
If $X$ is a Banach lattice then $L^p(\Omega';X)$ also is a Banach lattice. 
So Theorem \ref{thm:main_result} shows that any Ritt
operator on a UMD Banach lattice with a bounded $H^\infty(B_\gamma)$ functional calculus  
can be dilated into a positive contractive Ritt operator acting on a bigger 
UMD Banach lattice, and admitting a bounded $H^\infty(B_{\gamma'})$ 
functional calculus for some $\gamma'<\frac{\pi}{2}$.

\smallskip
Likewise if $X$ is a noncommutative $L^p$-space with $1<p<\infty$, then 
$X$ is UMD \cite[Theorem 6.1]{BGM89} and $L^p(\Omega';X)$ is a noncommutative $L^p$-space. 
Consequently any Ritt operator on a noncommutative $L^p$-space with a bounded 
$H^\infty(B_\gamma)$ functional calculus  
can be dilated into a positive contractive Ritt operator acting on a bigger 
noncommutative $L^p$-space and admitting a bounded $H^\infty(B_{\gamma'})$ 
functional calculus for some $\gamma'<\frac{\pi}{2}$.
\end{remark}

We now turn to bounded analytic semigroups. The results below improve and
extend some of the main results by the second named author in~\cite{Fac13c}.

\begin{thm}
\label{Th main UMD sectorial}
Let $A$ be a sectorial operator on a UMD Banach space $X$ and let $1<p<\infty$. 
The following conditions are equivalent.
\begin{itemize}
\item [(i)] $A$ admits  a bounded $H^{\infty}(\Sigma_\theta)$ functional calculus 
for some $\theta \in (0,\frac{\pi}{2})$. 
\item [(ii)] There exist a measure space $\Omega'$, a sectorial 
operator $B$ of type $<\frac{\pi}{2}$ on $L^p(\Omega';X)$ which admits a bounded $H^{\infty}(\Sigma_{\theta'})$ 
functional calculus for some $\theta' \in (0,\frac{\pi}{2})$, 
and two bounded operators $J \co X \to L^p(\Omega';X)$ and 
$Q \co L^p(\Omega';X) \to X$ such that
$$
e^{-tA}=Qe^{-tB}J,\qquad \text{for all } t\geq 0,
$$
and
$$
\norm{e^{-tB}}\leq 1, \qquad \text{for all } t\geq 0.
$$
\end{itemize}
If moreover $X$ is an ordered UMD Banach space, then the sectorial operator $B$ in {\rm (ii)} can be chosen 
so that $e^{-tB}$ is positive for any $t\geq 0$.
\end{thm}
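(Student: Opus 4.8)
The implication (ii)$\Rightarrow$(i) is the easy direction, and the plan is to mimic the corresponding part of the proof of Theorem \ref{thm:main_result}. If (ii) holds, then for every $\varphi\in H^\infty_0(\Sigma_{\theta'})$ the Cauchy integral representation (\ref{2CauchySec}) together with $e^{-tA}=Qe^{-tB}J$ yields $\varphi(A)=Q\varphi(B)J$ (one passes from the semigroup identity to the resolvent identity via the Laplace transform, and then to $\varphi(B)$ by integrating). Boundedness of the $H^\infty(\Sigma_{\theta'})$ calculus of $B$ then gives $\norm{\varphi(A)}\le K\norm{J}\norm{Q}\norm{\varphi}_{H^\infty(\Sigma_{\theta'})}$, so $A$ has a bounded $H^\infty(\Sigma_{\theta'})$ calculus.

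For (i)$\Rightarrow$(ii), the plan is to reduce to the Ritt case already handled in Theorem \ref{thm:main_result}, via the fractional-power/Cayley-type correspondence between bounded analytic semigroups and Ritt operators, or alternatively to run the semigroup dilation Theorem \ref{Th dilation Delta semigroup} and then renorm. The cleanest route is the latter combined with a second quantization step: since $X$ is UMD it is reflexive and $X,X^*$ have finite cotype, so Theorem \ref{Th dilation Delta semigroup} produces a measure space $\Omega'$, a $C_0$-group $(U_t)_{t\in\R}$ of isometries on $L^p(\Omega';X)$, and bounded $J\co X\to L^p(\Omega';X)$, $Q\co L^p(\Omega';X)\to X$ with $e^{-tA}=QU_tJ$. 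The issue is that the isometric group $(U_t)$ has trivial (skew-adjoint-type) generator with no $H^\infty$ calculus in the sectorial sense; what one wants is a sectorial $B$ with $e^{-tB}$ contractive \emph{and} with bounded $H^\infty(\Sigma_{\theta'})$ calculus dilating $e^{-tA}$. The standard device, parallel to passing from $U$ to $U_\beta$ in the proof of Theorem \ref{thm:main_result}, is to apply subordination: set $B$ to be (a suitable fractional power / Bochner subordinate of) $-iG$ where $G$ is the generator of $(U_t)$, using that subordinated semigroups of isometry groups on UMD spaces are bounded analytic with bounded $H^\infty$ calculus. Concretely, one uses the fact that $-A$ generates a bounded analytic semigroup, takes $\alpha\in(1,2/\theta\cdot\ldots)$ close to $1$ so that $A^{1/\alpha}$-type manipulations apply, dilates the semigroup $e^{-tA^\alpha}$ (still analytic, still bounded $H^\infty$ by Lemma \ref{fraction}) by an isometry group, and then takes the $\alpha$-fractional subordinate of the dilating group's generator to recover a contractive analytic semigroup $e^{-tB}$ on $L^p(\Omega';X)$ dilating $e^{-tA}$; the bounded $H^\infty(\Sigma_{\theta'})$ calculus of $B$ follows from Theorem \ref{thm:fractional_power_hinfty} (its continuous analogue) applied to the isometry group on the UMD space $L^p(\Omega';X)$, using that UMD is stable under $L^p(\cdot)$.

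The main obstacle will be the continuous (semigroup) analogue of the passage $U\rightsquigarrow U_\beta$: one needs that the Bochner subordinate semigroup of a bounded $C_0$-group of isometries on a UMD space is a bounded analytic semigroup whose negative generator has a bounded $H^\infty(\Sigma_{\theta'})$ functional calculus for some $\theta'<\pi/2$. This is the semigroup counterpart of Theorem \ref{thm:fractional_power_hinfty} and should be obtained by the same transference-to-the-shift argument (Bochner subordination of the translation group, UMD-boundedness of the relevant Fourier multipliers), invoking $1-\mathbb{D}_\theta\subset\Sigma_\theta$-type domain inclusions and Proposition \ref{Prop LM4.1}'s sectorial side directly. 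Once this lemma is in place, positivity in the ordered case is automatic: Theorem \ref{Th dilation Delta semigroup}(a) gives positive isometries $U_t$, subordination preserves positivity (the subordinating measures are positive), and so $e^{-tB}$ is positive for all $t\ge0$; one records this exactly as in the last line of the proof of Theorem \ref{thm:main_result}. Finally one sets $B$ to be the constructed generator, checks $\norm{e^{-tB}}\le1$ from contractivity of subordinates of contraction semigroups, and reads off (ii).
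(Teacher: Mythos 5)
Your plan coincides with the paper's own proof: for (i)$\Rightarrow$(ii) one takes $\alpha>1$ close to $1$ so that $A^{\alpha}$ keeps a bounded $H^{\infty}$ calculus of angle $<\frac{\pi}{2}$ (Lemma \ref{fraction}), dilates $e^{-tA^{\alpha}}$ by a $C_0$-group of isometries via Theorem \ref{Th dilation Delta semigroup}, sets $B=C^{1/\alpha}$ where $C$ is the negative generator of that group, and uses the Yosida subordination formula to obtain contractivity, positivity in the ordered case, and the identity $e^{-tA}=Qe^{-tB}J$. The only difference is cosmetic: where you propose establishing a ``continuous analogue'' of Theorem \ref{thm:fractional_power_hinfty} by a transference argument, the paper simply invokes the Hieber--Pr\"uss theorem that the negative generator of a bounded $C_0$-group on a UMD space admits a bounded $H^{\infty}(\Sigma_{\omega})$ calculus for every $\omega>\frac{\pi}{2}$, and then applies Lemma \ref{fraction} with $\beta=1/\alpha<1$ to get the angle below $\frac{\pi}{2}$.
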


\begin{proof}
The proof of `(ii) $\Rightarrow$ (i)' is similar to the one for Theorem 
\ref{thm:main_result}, so we omit it.

Assume (i). According to Lemma \ref{fraction} we can find $\alpha > 1$ and
$\theta'' \in (0, \frac{\pi}{2})$ such that
the fractional power $A^{\alpha}$ has a bounded $H^{\infty}(\Sigma_{\theta''})$ 
functional calculus. Then it follows from Theorem~\ref{Th dilation Delta semigroup} 
applied to the operator $A^\alpha$ that there exist a measure space 
$\Omega'$, two bounded operators $J \co X \to L^p(\Omega';X)$ and 
$Q \co L^p(\Omega';X) \to X$ and a $C_0$-group $(U_t)_{t \in \R}$ of isometries 
(positive if $X$ is ordered) such that
\begin{align}  
e^{-tA^\alpha} = QU_tJ, \qquad \text{for all } t \ge 0. 
\label{eq:fundamental_factorization_semigroup}
\end{align}
We denote by $C$ the negative generator of $(U_t)_{t \geq 0}$, so that we can write 
$U_t=e^{-tC}$ for any $t\geq 0$. Since $X$ is UMD, the operator $C$ admits a bounded
$H^\infty(\Sigma_\omega)$ functional calculus for any $\omega>\frac{\pi}{2}$ \cite{HP}.
Let $\beta=\frac{1}{\alpha}$. Since this real number belongs to $(0,1)$, we deduce 
from Lemma \ref{fraction} that the operator $C^{\beta}$ is sectorial 
of type $<\frac{\pi}{2}$ and admits a bounded $H^{\infty}(\Sigma_{\theta'})$ 
functional calculus for some $\theta' \in (0,\frac{\pi}{2})$. 

We now use subordination. By \cite[IX,11]{Yos80}, for any $t> 0$, 
there exists a nonnegative function $f_{t,\beta} \in L^1(\R_+)$ with 
$\int_0^{\infty} f_{t,\beta}(s) \, ds = 1$, 
such that the semigroup $(e^{-tC^{\beta}})_{t \ge 0}$ 
generated by $-C^{\beta}$ is given in the strong sense by
$$
e^{-tC^{\beta}} = \int_0^{\infty} f_{t,\beta}(s) U_s \, ds.
$$
This explicit formula shows that $e^{-tC^{\beta}}$ 
is contractive for all $t \ge 0$ since each $U_s$ is contractive
(and positive if $X$ is ordered). Likewise we have
$$
e^{-tA}=
e^{-t(A^{\alpha})^{\beta}} =
\int_0^{\infty} f_{t,\beta}(s)e^{-sA^\alpha}\, ds
$$
for any $t>0$. These identities together 
with \eqref{eq:fundamental_factorization_semigroup} show 
that for any $t > 0$,
\begin{align*}
e^{-tA} = &
\int_0^{\infty} f_{t,\beta}(s) QU_sJ \, ds\\
& =Q\bigg(\int_0^{\infty} f_{t,\beta}(s) U_s \, ds\bigg)J\, =\ Qe^{-tC^{\beta}}J.
\end{align*}
We conclude by taking $B=C^{\beta}$.
\end{proof}

When we restrict to  $L^p$-spaces, we obtain the following result.

\begin{thm}
\label{Th main sectorial Lp}
Let $\Omega$ be a measure space and $1<p<\infty$. Let $A$ be a sectorial operator on $L^p(\Omega)$. 
The following conditions are equivalent.
\begin{itemize}
\item [(i)] $A$ admits a bounded $H^\infty(\Sigma_{\theta})$ functional calculus 
for some $\theta \in (0, \frac{\pi}{2})$.
\item [(ii)] There exist a measure space $\Omega'$, a sectorial operator
$B$ of type $<\frac{\pi}{2}$ on $L^p(\Omega')$, and two bounded operators
$J \co L^p(\Omega) \to L^p(\Omega')$ and $Q \co L^p(\Omega') \to L^p(\Omega)$ such that
$$
e^{-tA}=Qe^{-tB}J,\qquad \text{for all } t\geq 0
$$
and
$$
e^{-tB}\ \text{is a positive contraction}, \ \text{for all } t\geq 0.
$$
\end{itemize}
\end{thm}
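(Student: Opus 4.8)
The plan is to obtain this statement as the $L^p$-specialization of the UMD result Theorem \ref{Th main UMD sectorial}, supplemented in one direction by Weis's theorem \cite[Remark 4.c]{Weis01}. The two facts that make the reduction work are: for $1<p<\infty$ the space $L^p(\Omega)$ is a UMD Banach space and is an ordered Banach space for its natural order (being a Banach lattice, hence Riesz-normed); and $L^p(\Omega';L^p(\Omega))$ is isometrically isomorphic to an $L^p$-space over a product measure space, a fact already used repeatedly in Section~\ref{sec:dilations}. Thus both the hypotheses and the target classes of Theorem \ref{Th main UMD sectorial} specialize cleanly to the present setting.

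For `(i) $\Rightarrow$ (ii)' I would simply invoke Theorem \ref{Th main UMD sectorial} with $X=L^p(\Omega)$. This yields a measure space $\Omega'$, a sectorial operator $B$ of type $<\frac{\pi}{2}$ on $L^p(\Omega';L^p(\Omega))$, which we identify with some $L^p(\Omega'')$, and bounded operators $J,Q$ with $e^{-tA}=Qe^{-tB}J$ for all $t\geq 0$ and $\|e^{-tB}\|\leq 1$. Since $L^p(\Omega)$ is ordered, the `moreover' clause of Theorem \ref{Th main UMD sectorial} allows us to take each $e^{-tB}$ positive, so each $e^{-tB}$ is a positive contraction. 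This is exactly (ii); here one simply discards the additional (and now unneeded) information that $B$ has a bounded $H^\infty$ functional calculus.

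The direction `(ii) $\Rightarrow$ (i)' is the only one requiring a new input, since (ii) no longer assumes a bounded $H^\infty$ calculus for $B$. Because $B$ is sectorial of type $<\frac{\pi}{2}$, the operator $-B$ generates a bounded analytic semigroup, and by hypothesis each $e^{-tB}$ is a positive contraction on $L^p(\Omega')$; hence Weis's theorem \cite[Remark 4.c]{Weis01} provides some $\theta'\in(0,\frac{\pi}{2})$ for which $B$ admits a bounded $H^\infty(\Sigma_{\theta'})$ functional calculus. I then transfer this to $A$: evaluating $e^{-tA}=Qe^{-tB}J$ at $t=0$ gives $QJ=\Id$, and since both semigroups are bounded one has $(\mu\Id+A)^{-1}=\int_0^\infty e^{-\mu t}e^{-tA}\,dt$ and likewise for $B$ whenever $\mu>0$, so that $(\mu\Id+A)^{-1}=Q(\mu\Id+B)^{-1}J$. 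By analytic continuation of resolvents this resolvent identity propagates to the complement of a sector of angle $<\pi$, which in particular shows that $A$ is sectorial of type $\leq\theta'$. Feeding $R(z,A)=QR(z,B)J$ into the Cauchy integral \eqref{2CauchySec} along $\partial\Sigma_\nu$ for $\nu\in(\theta',\frac{\pi}{2})$ yields $\varphi(A)=Q\varphi(B)J$ for every $\varphi\in H^\infty_0(\Sigma_\nu)$, whence $\|\varphi(A)\|\leq\|Q\|\,\|J\|\,K\,\|\varphi\|_{H^\infty(\Sigma_\nu)}$ and $A$ has a bounded $H^\infty(\Sigma_\nu)$ functional calculus with $\nu<\frac{\pi}{2}$.

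I do not anticipate any deep obstacle: the bulk of the content is already contained in the machinery of Sections \ref{sec:dilations}--\ref{sec:main_results}. The only point requiring genuine care is the `(ii) $\Rightarrow$ (i)' transfer step — keeping track of the half-plane on which the Laplace representation of the resolvent is valid, checking that the resulting resolvent identity extends by analytic continuation to a region large enough to run the Cauchy integral, and correctly identifying Weis's theorem as the substitute for the functional-calculus hypothesis on $B$ that was available in Theorem \ref{Th main UMD sectorial}.
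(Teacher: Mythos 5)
Your proposal is correct and follows essentially the same route as the paper: `(i) $\Rightarrow$ (ii)' is obtained exactly as in the text, as a special case of Theorem \ref{Th main UMD sectorial} using that $L^p(\Omega';L^p(\Omega))$ is again an $L^p$-space (with positivity coming from the ordered `moreover' clause), and `(ii) $\Rightarrow$ (i)' invokes Weis's theorem \cite[Remark 4.c]{Weis01} for $B$ and then transfers the calculus bound to $A$ through the dilation. The paper leaves that last transfer implicit, whereas you spell out the standard Laplace-transform/analytic-continuation/Cauchy-integral argument; just note that concluding sectoriality of $A$ of smaller type from the resolvent identity needs the usual ``bounded resolvent along a sequence forces membership in $\rho(A)$'' closedness argument (or one can simply take the contour angle $\nu$ larger than both sectoriality types, which suffices since $e^{-tA}$ already presupposes type $<\frac{\pi}{2}$).
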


\begin{proof}
(i) $\Rightarrow$ (ii) Since $L^p(\Omega';X)$ is an $L^p$-space whenever 
$X$ is an $L^p$-space, this implication is a special case of Theorem~\ref{Th main UMD sectorial}. 

(ii) $\Rightarrow$ (i) By \cite[Remark 4.c]{Weis01}, the operator $B$ admits a 
bounded $H^\infty(\Sigma_\theta)$ functional calculus for some $\theta \in (0, \frac{\pi}{2})$. 
Then the dilation assumption implies that $A$ has a bounded $H^{\infty}(\Sigma_\theta)$ functional calculus. 
\end{proof}

The next theorem is obtained by combining Theorem \ref{Th main UMD sectorial}
for subspaces of $L^p$ together with 
part (b) of Theorem~\ref{Th dilation Delta semigroup}.

\begin{cor}
\label{Th sectorial subspaces}
Let $1<p<\infty$, let $X$ be a closed subspace of an $L^p$-space
and let $A$ be a sectorial operator on $X$. 
The following conditions are equivalent.
\begin{itemize}
\item [(i)]  $A$ admits a bounded $H^\infty(\Sigma_{\theta})$ functional calculus for
some $\theta \in (0, \frac{\pi}{2})$.
\item [(ii)] There exist a measure space $\Omega'$, a sectorial operator
$B$ of type $<\frac{\pi}{2}$ on $L^p(\Omega')$, a subspace $E\subset L^p(\Omega')$ which is
$(e^{-tB})_{t\geq 0}$-invariant, and 
two bounded operators $J \co X \to E$ and $Q \co E \to X$ such that
$$
e^{-tA}=Qe^{-tB}J,\qquad \text{for all } t\geq 0,
$$
and
$$
e^{-tB}\co L^p(\Omega')\to L^p(\Omega') \ \text{is a positive contraction,}\ \text{for all } t\geq 0.
$$
\end{itemize}
\end{cor}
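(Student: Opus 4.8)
The overall strategy is to run the argument of Theorem~\ref{Th main UMD sectorial} but feeding it the refined dilation from part~(b) of Theorem~\ref{Th dilation Delta semigroup} instead of the bare one, and to use Weis's theorem for the easy converse. Note first that a closed subspace $X$ of an $L^p$-space is UMD, hence reflexive with $X$ and $X^*$ of finite cotype, so all the machinery of Section~\ref{sec:dilations} applies to $X$, and the ambient $L^p$-spaces below are UMD as well.

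For the implication (ii)~$\Rightarrow$~(i), I would argue exactly as in Theorem~\ref{thm:main_result} and only sketch it: a sectorial operator $B$ of type $<\frac{\pi}{2}$ on $L^p(\Omega')$ whose semigroup consists of positive contractions admits a bounded $H^\infty(\Sigma_\theta)$ functional calculus for some $\theta\in(0,\frac{\pi}{2})$ by \cite[Remark~4.c]{Weis01}; the part of $B$ in the invariant subspace $E$ inherits such a calculus; and the factorization $e^{-tA}=Qe^{-tB}|_EJ$ together with Proposition~\ref{compression} (applied to the semigroup $\mathscr{S}=\R_+$) exhibits $(e^{-tA})_{t\geq 0}$ as similar to a compression of $(e^{-tB}|_E)_{t\geq 0}$, so that $A$ has a bounded $H^\infty$ functional calculus since this property is stable under compressions and similarities.

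For (i)~$\Rightarrow$~(ii), first use Lemma~\ref{fraction} to pick $\alpha>1$ and $\theta''\in(0,\frac{\pi}{2})$ with $A^\alpha$ admitting a bounded $H^\infty(\Sigma_{\theta''})$ functional calculus. Writing $X\subset L^p(\Omega)$, apply part~(b) of Theorem~\ref{Th dilation Delta semigroup} to $A^\alpha$: this gives a measure space $\Omega_0$, a $C_0$-group $(V_t)_{t\in\R}$ of positive isometries on the $L^p$-space $L^p(\Omega_0\times\Omega)$, a closed $V_t$-invariant subspace $E=L^p(\Omega_0;X)$ with $U_t:=V_t|_E$, and bounded operators $J\co X\to E$ and $Q\co E\to X$ such that $e^{-tA^\alpha}=QV_t|_EJ$ for all $t\geq 0$. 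Let $D$ be the negative generator of $(V_t)_{t\in\R}$. Since $L^p(\Omega_0\times\Omega)$ is UMD, $D$ admits a bounded $H^\infty(\Sigma_\omega)$ functional calculus for every $\omega>\frac{\pi}{2}$ by \cite{HP}, hence $D$ is sectorial of type $\frac{\pi}{2}$; setting $\beta=\frac{1}{\alpha}\in(0,1)$ and $B:=D^\beta$, Lemma~\ref{fraction} shows $B$ is sectorial of type $\beta\frac{\pi}{2}<\frac{\pi}{2}$ on $L^p(\Omega_0\times\Omega)$.

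It then remains to tie the two sides together by subordination. By \cite[IX,11]{Yos80}, for each $t>0$ there is a nonnegative $f_{t,\beta}\in L^1(\R_+)$ with $\int_0^\infty f_{t,\beta}(s)\,ds=1$, depending only on $\beta$ and $t$, such that both $e^{-sD}$ and $e^{-sA^\alpha}$ are subordinated by this same density, so that
$$
e^{-tB}=\int_0^\infty f_{t,\beta}(s)V_s\,ds\qquad\text{and}\qquad e^{-tA}=e^{-t(A^\alpha)^\beta}=\int_0^\infty f_{t,\beta}(s)e^{-sA^\alpha}\,ds ,
$$
the last equality using $(A^\alpha)^\beta=A$. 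The first formula shows that each $e^{-tB}$ is a positive contraction; moreover $E$, being closed and $V_s$-invariant, is $e^{-tB}$-invariant and $e^{-tB}|_E=\int_0^\infty f_{t,\beta}(s)U_s\,ds$. Plugging $e^{-sA^\alpha}=QV_s|_EJ$ into the second formula yields $e^{-tA}=Q\bigl(\int_0^\infty f_{t,\beta}(s)V_s|_E\,ds\bigr)J=Qe^{-tB}|_EJ$, which is precisely~(ii) with $\Omega'=\Omega_0\times\Omega$. The routine points are the UMD/cotype bookkeeping and elementary manipulations with Bochner integrals; I expect the only genuine (minor) obstacle to be verifying that part~(b) of Theorem~\ref{Th dilation Delta semigroup} simultaneously and compatibly delivers the group $(V_t)$ on the $L^p$-space, the invariant subspace $E$, the relation $U_t=V_t|_E$, and the factorization through $E$, which may require a glance back at its proof, together with the observation that the two subordination formulas above use literally the same density $f_{t,\beta}$.
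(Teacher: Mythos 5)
Your proposal is correct and follows essentially the same route as the paper, which obtains this corollary precisely by rerunning the proof of Theorem \ref{Th main UMD sectorial} while keeping track of part (b) of Theorem \ref{Th dilation Delta semigroup} (so that the subordinated semigroup $e^{-tB}$ lives as positive contractions on the ambient $L^p$-space and leaves $E=L^p(\Omega';X)$ invariant), with the converse handled via Weis's theorem as in Theorem \ref{Th main sectorial Lp}. The only cosmetic differences (invoking \cite{HP} merely to get sectoriality of type $\frac{\pi}{2}$, and routing the converse through Proposition \ref{compression}) are harmless and consistent with the paper's own arguments.
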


We finally consider semigroups acting on quotients of subspaces of $L^p$.
Arguing as in the proof of Corollary \ref{Th Ritt quotients of subspaces}, 
we recover the following result of the second named author
\cite{Fac13c}.

\begin{cor}\label{Th SG quotients of subspaces}
Let $1<p<\infty$, let $X$ be an $SQ_p$-space and let $A$ be a sectorial operator on $X$. 
The following conditions are equivalent.
\begin{itemize}
\item [(i)] $A$ admits a bounded $H^\infty(\Sigma_{\theta})$ functional calculus
for some $\theta \in (0, \frac{\pi}{2})$.
\item [(ii)] There exist a measure space $\Omega'$, a sectorial operator
$B$ of type $<\frac{\pi}{2}$ on $L^p(\Omega')$, two subspaces $F \subset E \subset L^p(\Omega')$ 
which are $(e^{-tB})_{t\geq 0}$-invariant, and an isomorphism $S \co X \to E/F$
such that
$$
e^{-tA}=S^{-1}\widetilde{e^{-tB}}S,\qquad \text{for all } t\geq 0,
$$
where $\widetilde{e^{-tB}}\co E/F\to E/F$ is the compression of $e^{-tB}$ to $E/F$, and
$$
e^{-tB}\co L^p(\Omega')\to L^p(\Omega') \ \text{is a positive contraction}, \  \text{for all }  t\geq 0.
$$
\end{itemize}
\end{cor}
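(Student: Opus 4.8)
The plan is to follow the proof of Corollary~\ref{Th Ritt quotients of subspaces}, with Theorem~\ref{Th main UMD sectorial} in the role that Theorem~\ref{thm:main_result} plays there. For the easy implication (ii)$\Rightarrow$(i), I would argue as in Theorem~\ref{Th main sectorial Lp}: if (ii) holds then $(e^{-tB})_{t\geq 0}$ is a bounded analytic semigroup of positive contractions on $L^p(\Omega')$, so by \cite[Remark 4.c]{Weis01} the operator $B$ has a bounded $H^\infty(\Sigma_\theta)$ functional calculus for some $\theta\in(0,\frac{\pi}{2})$. Since $E$ and $F$ are $(e^{-tB})_{t\geq 0}$-invariant, they are left invariant by $R(z,B)$ — for $\Re(z)$ large, because there $R(z,B)$ is the Laplace transform of the semigroup, and then on all of $\rho(B)$ by the resolvent identity — so the Cauchy integral~(\ref{2CauchySec}) gives $\varphi(\widetilde B)\,q=q\,\varphi(B)|_{E}$ for $\varphi\in H_0^\infty(\Sigma_\nu)$, where $q\co E\to E/F$ is the quotient map and $\widetilde B$ is the generator of $(\widetilde{e^{-tB}})_{t\geq 0}$. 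Hence $\widetilde B$, and therefore $A=S^{-1}\widetilde B S$, has a bounded $H^\infty(\Sigma_\theta)$ functional calculus; that is, boundedness of the functional calculus is preserved by compression and by similarity transforms.

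For the main implication (i)$\Rightarrow$(ii), I would run the proof of Theorem~\ref{Th main UMD sectorial} in this special case and keep track of the $SQ_p$-structure. By Lemma~\ref{fraction}, exactly as there, choose $\alpha>1$ such that $A^\alpha$ admits a bounded $H^\infty(\Sigma_{\theta''})$ functional calculus for some $\theta''\in(0,\frac{\pi}{2})$. Since $X$ is $SQ_p$, it is UMD, hence reflexive with $X$ and $X^*$ of finite cotype, so part~(c) of Theorem~\ref{Th dilation Delta semigroup} applies to $A^\alpha$: there are a measure space $\Omega$, a $C_0$-group $(V_t)_{t\in\R}$ of positive isometries on $L^p(\Omega'\times\Omega)$, and closed subspaces $F\subset E\subset L^p(\Omega'\times\Omega)$ invariant under each $V_t$, with $L^p(\Omega';X)=E/F$, such that each $U_t$ is the compression of $V_t$ and $e^{-tA^\alpha}=QU_tJ$ for all $t\geq 0$. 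Then Proposition~\ref{compression}, applied to the representations $t\mapsto e^{-tA^\alpha}$ and $t\mapsto U_t$ of $(\R_+,+)$, yields $(U_t)_{t\geq 0}$-invariant closed subspaces $F_2\subset E_2\subset L^p(\Omega';X)$ and an isomorphism $S\co X\to E_2/F_2$ with $e^{-tA^\alpha}=S^{-1}\widetilde U_t S$. Lifting $E_2,F_2$ along the quotient map $\pi\co E\to E/F$ to $E_1=\pi^{-1}(E_2)$ and $F_1=\pi^{-1}(F_2)$, one obtains $F\subset F_1\subset E_1\subset E$ with $E_1,F_1$ invariant under $V_s$ for $s\geq 0$, $E_1/F_1\cong E_2/F_2$, and (a compression of a compression being a compression) $\widetilde U_t$ identified with the compression $\widetilde V_t$ of $V_t$ to $E_1/F_1$.

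The last step is subordination. Let $D$ be the negative generator of the semigroup $(V_t)_{t\geq 0}$ on the $L^p$-space $L^p(\Omega'\times\Omega)$; this space being UMD, $D$ has a bounded $H^\infty(\Sigma_\omega)$ functional calculus for every $\omega>\frac{\pi}{2}$ by \cite{HP}, so with $\beta=\frac1\alpha\in(0,1)$ Lemma~\ref{fraction} shows that $B\coloneqq D^\beta$ is sectorial of type $<\frac{\pi}{2}$ and admits a bounded $H^\infty(\Sigma_{\theta'})$ functional calculus for some $\theta'\in(0,\frac{\pi}{2})$. By \cite[IX,11]{Yos80} there is, for each $t>0$, a nonnegative $f_{t,\beta}\in L^1(\R_+)$ with $\int_0^\infty f_{t,\beta}(s)\,ds=1$ and $e^{-tB}=\int_0^\infty f_{t,\beta}(s)V_s\,ds$ in the strong sense; hence each $e^{-tB}$ is a positive contraction, and $E_1,F_1$, being closed and $V_s$-invariant for $s\geq 0$, are $(e^{-tB})_{t\geq 0}$-invariant. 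The same kernel gives $e^{-tA}=e^{-t(A^\alpha)^\beta}=\int_0^\infty f_{t,\beta}(s)e^{-sA^\alpha}\,ds$, and since $S$ and $\pi$ commute with Bochner integration, $\int_0^\infty f_{t,\beta}(s)\widetilde V_s\,ds$ is precisely the compression $\widetilde{e^{-tB}}$ of $e^{-tB}$ to $E_1/F_1$; hence $e^{-tA}=S^{-1}\widetilde{e^{-tB}}S$ for all $t\geq 0$. Relabelling $L^p(\Omega'\times\Omega)$ as $L^p(\Omega')$ then yields (ii).

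I expect the only delicate point to be the compression bookkeeping: passing from the compressed group $(\widetilde U_t)$ on a subquotient of $L^p(\Omega';X)$ to a genuine compression of the ambient group $(V_t)$ on $L^p(\Omega'\times\Omega)$, checking the lifted subspaces are invariant under the one-sided semigroup $(V_s)_{s\geq 0}$ (which is all that is needed, since the subordination kernels are supported on $\R_+$), and verifying that subordinating the compressed group reproduces the compression of the subordinated semigroup. This is the continuous, subordinated analogue of the last paragraph of the proof of Corollary~\ref{Th Ritt quotients of subspaces}; everything else is a transcription of the proofs of that corollary and of Theorem~\ref{Th main UMD sectorial}.
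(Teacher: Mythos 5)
Your argument is correct and is essentially the paper's own proof: the paper only sketches this corollary by saying to argue as in Corollary \ref{Th Ritt quotients of subspaces}, i.e.\ run the proof of Theorem \ref{Th main UMD sectorial} using part (c) of Theorem \ref{Th dilation Delta semigroup}, pass through Proposition \ref{compression}, lift the invariant subquotient of $L^p(\Omega';X)$ to $V_t$-invariant subspaces of the ambient $L^p$-space, and subordinate — which is precisely what you do, including the correct handling of the easy implication via \cite[Remark 4.c]{Weis01} and preservation of the functional calculus under compression and similarity. Your "delicate points" (one-sided invariance of the lifted subspaces and compatibility of subordination with compression) are exactly the bookkeeping the paper leaves implicit, and you resolve them correctly.
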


Comments similar to the ones in Remark \ref{other classes} apply to the
sectorial setting.

\begin{remark} Let $\Omega'$ be a measure space, let $1<p<\infty$,
let $E\subset L^p(\Omega')$
be a closed subspace and let $(R_t)_{t\geq 0}$ be a bounded analytic semigroup
with generator $-B$. Assume that each $R_t\colon E\to E$
is contractively regular (in the sense of \cite{Pis94}). According
to \cite[Corollary 3.2]{LeMSim01},
$B$ admits a bounded $H^\infty(\Sigma_\theta)$ functional calculus for any 
$\theta>\frac{\pi}{2}$ (this can also be derived from the dilation result
\cite[Theorem 4.2.11]{Fac15}). However we do not know if $B$ admits  
a bounded $H^\infty(\Sigma_\theta)$ functional calculus for some 
$\theta\in(0, \frac{\pi}{2})$. If such a result were true,
it would be an analogue of Weis's Theorem 
\cite[Remark 4.c]{Weis01} for subspaces of $L^p$ and would 
lead to a more precise form of  Corollary \ref{Th sectorial subspaces}.

An essentially equivalent question is whether any contractively 
regular Ritt operator $R\colon E\to E$ admits a bounded 
$H^\infty(B_\gamma)$ functional calculus for some 
$\gamma\in(0, \frac{\pi}{2})$.
\end{remark}

\section{Representations of amenable groups}\label{sec:representations}

Let $G$ be a locally compact group, let $X$ be a Banach space and
let $\pi\colon G\to B(X)$ be a representation. We say that $\pi$
is continuous when $t\mapsto \pi(t)x$
is continuous for any $x\in X$. Then $\pi$ is said to be bounded 
when 
$$
\norm{\pi}: = \sup\bigl\{\norm{\pi(t)}\, :\, t\in G\bigr\}\, <\infty\,.
$$
A famous theorem of Dixmier asserts that if $G$ is amenable and
$X=H$ is a Hilbert space, then any bounded continuous representation
$\pi\colon G\to B(H)$ is similar to a unitary representation, that is, there
exists an isomorphism $S\in B(H)$ such that $S\pi(t)S^{-1}$ is a unitary 
for any $t\in G$ \cite{Dix}. 

The aim of this section is to 
establish Banach space analogues of that result.
In the Banach space context, the role of `unitary representations'
will be played by `isometric representations', that is, representations
$\pi$ such that $\pi(t)$ is an isometry for any $t$. Note that this 
holds true if and only if $\pi(t)$ is a contraction for any $t\in G$.

In the case when $G=\Z$ or $G=\R$, the results we obtain give an alternate
route to prove some of the results from Section \ref{sec:main_results},
see Remark \ref{alternative} for details.

\bigskip
We will use ultraproducts of Banach spaces. We pay a special attention 
to the case when
these spaces are ordered ones.
We recall that if $(X_j)_{j\in I}$ is a family of Banach spaces
indexed by an arbitrary set $I$ and $\U$ is an ultrafilter on $I$, then the 
ultraproduct $(X_j)^\U$ is defined as the quotient space $\ell^\infty(I;X_j)/N_\U$, where
$\ell^\infty(I;X_j)$ is the space of all bounded families $(x_j)_{j\in I}$ with $x_j\in X_j$, 
equipped with the sup norm, and $N_\U$ is the subspace of all such families for which
$\lim_\U\norm{x_j}_{X_j}=0$. Thus any element $z$ of $(X_j)^\U$ is a class of bounded 
families $(x_j)_{j\in I}$
modulo $N_\U$. Further if $(x_j)_{j\in I}$ is any representative of $z$, then
$$
\norm{z} = \,\lim_\U\norm{x_j}_{X_j}.
$$
We refer the reader to \cite{Khamsi-Sims-Ultra}
for general information on this construction.

When all spaces $X_j$ are equal to a single
space $X$, the associated ultraproduct is called an ultrapower 
and is denoted by $X^\U$.

Assume that each $X_j$ is the complexification of a real
Banach space $X_{j, \R}$. Let $(X_j)^\U_\R$ be the subset
of all elements of $(X_j)^\U$ which have a representative 
$(x_j)_{j\in I}$, with $x_j\in X_{j,\R}$ for any $j\in I$.
Clearly $(X_j)^\U_\R$ is a real subspace of $(X_j)^\U$.
Consider $z^1,z^2$ in $(X_j)^\U_\R$ and let 
$x_j^1, x^2_j \in X_{j,\R}$ such that
$(x^1_j)_{j\in I}$ and $(x^2_j)_{j\in I}$ are representatives
of $z^1$ and $z^2$, respectively.
Applying (\ref{2Complex}) to each $X_j$, and passing to the limit along $\U$,
we deduce that 
$\lim_{\U}\norm{x_j^1}_{X_j}\leq \lim_{\U}\norm{x_j^1 + i x^2_j}$, 
which means that
$$
\norm{z^1}\leq \norm{z^1+ iz^2}.
$$
This implies
that 
$(X_j)^\U_\R\cap i(X_j)^\U_\R=\{0\}$, and hence 
$(X_j)^\U$ is the real direct 
sum of $(X_j)^\U_\R$ and $i(X_j)^\U_\R$. Moreover 
$(X_j)^\U_\R$ is closed. Likewise, we have
$\norm{z^2}\leq \norm{z^1+ iz^2}$. Hence 
$(X_j)^\U$ is the complexification of $(X_j)^\U_\R$.

Assume now that each $X_j$ is a Riesz-normed space. 
We may define an order
on $(X_j)^\U_\R$ by asserting that $z\geq 0$ when it has a representative 
$(x_j)_{j\in I}$, with $x_j\geq 0$ for any $j\in I$. 
Then the corresponding positive cone
$\Cc= \{z\geq 0\}$ is closed (see e.g. 
the argument in \cite[p.224]{Heinrich}).

Let $z,z'\in (X_j)^\U_\R$, with $-z'\leq z\leq z'$.
Since $z'+z\geq 0$ and $z'-z\geq 0$ one can find, for each 
$j\in I$,
$x_j\geq 0$ and $y_j\geq 0$ in $X_j$ such that 
$(x_j)_{j\in I}$ and 
$(y_j)_{j\in I}$ are representatives of $z'+z$ and $z'-z$,
respectively. Then $\bigl(\frac12(y_j-x_j)\bigr)_{j\in I}$ and 
$\bigl(\frac12(y_j+x_j)\bigr)_{j\in I}$ are representatives of 
$z$ and $z'$, respectively. We have
$$
-(y_j+x_j)\leq
(y_j-x_j)\leq(y_j+x_j),\qquad 
j\in I.
$$
Since each $X_j$ is absolutely monotone, this implies
that $\norm{y_j-x_j}\leq \norm{y_j+x_j}$ for any $j\in I$.
Passing to the limit along $\U$, we deduce  that
$\norm{z}\leq \norm{z'}$. This shows that 
$\Cc$ is proper and that $(X_j)^\U$ is an absolutely
monotone ordered space.

Let $z\in (X_j)^\U_\R$, with representative $(x_j)_{j\in I}$ and let
$\epsilon>0$. Since each $X_j$ is Riesz-normed, one can find 
$y_j\in X_{j,\R}$ such that $-y_j\leq x_j\leq y_j$ and
$\norm{y_j}\leq(1+\epsilon)\norm{x_j}$, for any $j\in I$.
Let $w\in (X_j)^\U$ be the class of $(y_j)_{j\in I}$ (which
is a bounded family). Then $-w\leq z\leq w$
and passing to the limit, we have 
$\norm{w}\leq(1+\epsilon)\norm{z}$. This shows that 
$(X_j)^\U$ is a Riesz-normed space.

For further use we  note that for any $1<p<\infty$, the ultraproduct of a 
family of $L^p$-spaces is an $L^p$-space, see e.g.  
\cite[Example 2.20]{Khamsi-Sims-Ultra}.

Let $X$ be a Banach space. For any $1<p<\infty$
we denote by $\U(p;X)$ the class of Banach spaces which are ultraproducts 
of families of the form $(L^p(\Omega_j;X))_{j\in I}$ for some arbitrary measure 
spaces $\Omega_j$.

Let $G$ be a locally compact group, endowed with a fixed right Haar measure simply denoted by $dt$.
For a measurable set $E\subset G$, we
let $\vert E\vert$ denote its Haar measure. Then we have
$\vert Es\vert =\vert E\vert$ for any $s\in G$. 
Let $E \triangle F$ denote the symmetric difference of two subsets of $G$.
A net $(E_i)_{i\in I}$ of measurable subsets of $G$ is called a \emph{F\o lner net} if 
$0< \vert E_i \vert<\infty$ for any $i\in I$ and 
\begin{equation}\label{Folner1}
\lim_i\, \frac{\vert E_i s\triangle E_i\vert}{\vert E_i\vert} = 0, 
\qquad \text{for all } s \in G.
\end{equation}
The existence of a  F\o lner net is equivalent to the 
amenability of $G$ (see \cite{Pat} for details and other 
equivalent definitions).

Let $E\subset G$ be a measurable set such that $0< \vert E \vert <\infty$ and let $s\in G$. 
Consider $F_1 = E\setminus (Es \cap E)$ and $F_2 = Es\setminus (Es \cap E)$.
Since $\vert Es\vert =\vert E\vert$, we have $\vert F_1\vert = \vert F_2\vert$. 
Moreover $Es \triangle E$ is the disjoint union of $F_1$ and $F_2$, hence 
$\vert Es \triangle E\vert =2\vert F_1\vert$. 
Since $E$ is the disjoint union of $Es\cap E$ and $F_1$, we also
have  $\vert E\vert = \vert Es \cap E \vert + \vert F_1\vert$. 
Altogether, we obtain that
$$
\vert Es \cap E \vert +\tfrac{1}{2}\vert Es\triangle E\vert = \vert E\vert.
$$
Consequently
$$
\frac{\vert Es\cap E\vert}{\vert E\vert}\,= 1-\frac{1}{2}
\frac{\vert Es\triangle E\vert}{\vert E\vert}.
$$
Thus if $(E_i)_{i \in I}$ is a  F\o lner net on $G$, then
\begin{equation}\label{Folner2}
\lim_i\, \frac{\vert E_i s\cap E_i\vert}{\vert E_i\vert} = 1, 
\qquad \text{for all } s \in G.
\end{equation}

In the sequel we let 
$$
\kappa_X \co X\hookrightarrow X^{**}
$$ 
denote the canonical embedding of $X$ into its bidual.

\begin{thm}
\label{thm:factorization_amenable_groups} 
Let $\pi \co G \to \mathcal{B}(X)$ be a bounded 
continuous representation of an amenable locally compact 
group $G$ on a Banach space $X$. Suppose $1<p<\infty$. 
\begin{itemize}
\item [(1)] There exist a Banach space $Y$ in the class $\U(p;X)$, 
an isometric representation $\widehat{\pi} \co G \to\mathcal{B} (Y)$ and two 
bounded operators $J \co X \to Y$ and $Q \co Y \to X^{**} $ 
such that $\norm{J}\norm{Q} \leq \norm{\pi}^2$ and 
$$
\kappa_X\pi(t)=Q\widehat{\pi}(t)J, 
\qquad \text{for all } t \in G.
$$
\item [(2)] For any $x \in X$, the map $t \mapsto \widehat{\pi}(t)J(x)$ from 
$G$ into $Y$ is continuous.
\end{itemize}
Moreover, if $X$ is a Riesz-normed space, then $Y$ is a Riesz-normed space as well and 
the representation $\widehat{\pi}$ can be chosen such that $\widehat{\pi}(t)$ is 
a positive operator on $Y$ for any $t \in G$.
\end{thm}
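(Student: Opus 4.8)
The plan is to imitate Dixmier's averaging argument, with ``average over the group'' replaced by ``average over a F\o lner set'' and with an ultraproduct added to make the resulting factorization exact. Fix a right Haar measure on $G$ and, using amenability, a F\o lner net $(E_i)_{i\in I}$, so that $0<\vert E_i\vert<\infty$ and $\vert E_i s\triangle E_i\vert/\vert E_i\vert\to 0$ for every $s\in G$. For each $i$ rescale the Haar measure to the probability measure $dt/\vert E_i\vert$ on $G$ and set $Y_i=L^p(G;X,dt/\vert E_i\vert)$, which is of the form $L^p(\Omega_i;X)$ for a measure space $\Omega_i$. Choose an ultrafilter $\U$ on $I$ refining the filter of tails of the directed set $I$, so that convergence along the net entails convergence along $\U$, and put $Y=(Y_i)^\U$. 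Then $Y$ belongs to $\U(p;X)$ by construction; and if $X$ is a Riesz-normed space, then so is each $Y_i$, hence so is $Y$ by the ultraproduct computation carried out just before the statement.

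On each $Y_i$ let $\rho_i(s)$ be the right translation $g\mapsto g(\,\cdot\, s)$. Since the Haar measure is right-invariant, $\rho_i(s)$ is an isometry, it is a positive operator when $X$ is Riesz-normed, and $s\mapsto\rho_i(s)$ is a representation of $G$. Define $\widehat\pi(s)=(\rho_i(s))^\U$ on $Y$; this is a genuine isometric (resp.\ positive) representation. For $x\in X$ consider the orbit map $f_x\colon G\to X$, $f_x(t)=\pi(t)x$, which is continuous and satisfies $\norm{\chi_{E_i}f_x}_{Y_i}\le\norm{\pi}\norm{x}$, so that $J(x):=[(\chi_{E_i}f_x)_i]$ defines $J\in\mathcal B(X,Y)$ with $\norm{J}\le\norm{\pi}$. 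The crucial identity is $\widehat\pi(t)J(x)=J(\pi(t)x)$: for fixed $i$ the two representatives differ by the function $r\mapsto\pi(rt)x\,\bigl(\chi_{E_it^{-1}}-\chi_{E_i}\bigr)(r)$, which is supported on $E_it^{-1}\triangle E_i$ and bounded pointwise by $\norm{\pi}\norm{x}$, so its $Y_i$-norm is at most $\norm{\pi}\norm{x}\,(\vert E_it^{-1}\triangle E_i\vert/\vert E_i\vert)^{1/p}$, which tends to $0$ along the net and hence vanishes in $Y$. In particular $t\mapsto\widehat\pi(t)J(x)=J(\pi(t)x)$ is continuous because $t\mapsto\pi(t)x$ is, which is part (2).

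For the co-factor, put $Q_i(g)=\int_{E_i}\pi(t)^{-1}g(t)\,dt/\vert E_i\vert$, a Bochner integral in which $\pi(t)^{-1}=\pi(t^{-1})$ has norm at most $\norm{\pi}$. H\"older's inequality for the probability measure $dt/\vert E_i\vert$ gives $\norm{Q_i(g)}\le\norm{\pi}\norm{g}_{Y_i}$, and $Q_i(\chi_{E_i}f_x)=x$ for every $x$. For $z=[(g_i)]\in Y$ and $x^*\in X^*$ the scalars $\langle Q_i(g_i),x^*\rangle$ form a bounded net, so $\langle Qz,x^*\rangle:=\lim_\U\langle Q_i(g_i),x^*\rangle$ is well defined, independent of the chosen representative, linear in $z$, and bounded by $\norm{\pi}\norm{z}\norm{x^*}$; this yields $Q\in\mathcal B(Y,X^{**})$ with $\norm{Q}\le\norm{\pi}$, whence $\norm{J}\norm{Q}\le\norm{\pi}^2$. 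Finally $QJ=\kappa_X$ follows from $Q_i(\chi_{E_i}f_x)=x$, and therefore $Q\widehat\pi(t)J=QJ\pi(t)=\kappa_X\pi(t)$ for all $t\in G$. The Riesz-normed ``moreover'' is then immediate from the positivity of the operators $\rho_i(s)$ recorded above.

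I expect the delicate part to be the exactness of the factorization, and three points deserve attention. First, one cannot work with $L^p(E_i;X)$ instead of $L^p(G;X,dt/\vert E_i\vert)$: truncating a translate back to $E_i$ would destroy both the isometry and the multiplicativity of $s\mapsto\widehat\pi(s)$, so the base measure space must be enlarged to all of $G$, which is harmless for membership in $\U(p;X)$. Second, $\widehat\pi(t)J(x)=J(\pi(t)x)$ holds only after passing to the ultraproduct, and it is precisely the combination of the F\o lner condition with the uniform bound on $\pi$ that kills the ``boundary'' error. Third, the averages $Q_i(g_i)$ converge only in the weak-$*$ sense in general, so $Q$ must be allowed to take values in $X^{**}$ and the conclusion is naturally stated with the canonical embedding $\kappa_X$; a genuine factorization through $X$ is recovered only when $X$ is reflexive, as in Theorem~\ref{DD}.
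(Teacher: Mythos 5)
Your proposal is correct and follows essentially the same route as the paper: F\o lner averaging embeddings $x\mapsto \chi_{E_i}\pi(\cdot)x$ (you absorb the normalization into the measure rather than the map), the averaged co-factors $Q_i$ with a weak$^*$ limit along the ultrafilter landing in $X^{**}$, and the right regular translations inducing the isometric (positive) representation on the ultraproduct. The only difference is presentational: you use the F\o lner condition once to prove the exact intertwining $\widehat{\pi}(t)J=J\pi(t)$ in $Y$, which yields both the factorization and the continuity statement (2) at once, whereas the paper computes $q_{E_i}\tau_s j_{E_i}(x)=\tfrac{|E_is^{-1}\cap E_i|}{|E_i|}\pi(s)x$ and invokes the F\o lner condition separately for (1) and (2).
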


\begin{proof}
Let $E\subset G$ be a measurable set with $0<\vert E\vert<\infty$. We let
$j_E \co X\to L^p(G;X)$ be the linear map given by
$$
j_{E}(x) = \frac{1}{\vert E\vert^{\frac{1}{p}}}\chi_E\,\pi(\cdot)x
$$
for any $x \in X$. This is well-defined; indeed,
the continuity of $\pi$ shows that $j_{E}(x)$ is measurable and we have
$$
\norm{j_{E}(x)}_{L^p(G;X)}^p = \,
\frac{1}{\vert E\vert}\int_E\norm{\pi(t)x}_X^p\, dt\,\leq \norm{\pi}^p\norm{x}_X^p.
$$
This shows that $j_{E}$ is bounded with $\norm{j_E} \leq \norm{\pi}$. Let $p'$ be the conjugate 
of $p$. Then similarly  we define $q_{E} \co L^p(G;X)\to X$ by
$$
q_{E}(f) = \frac{1}{\vert E\vert^{\frac{1}{p'}}}\,\int_E\pi(t^{-1})\bigl(f(t)\bigr)\, dt
$$
for any $f \in L^p(G;X)$. Using H\"older's inequality we see that
\begin{equation}
\begin{split}
\bnorm{q_E(f)}_X & \leq \frac{1}{\vert E\vert^{\frac{1}{p'}}} \int_E 
\bnorm{\pi(t^{-1})\bigl(f(t)\bigr)}_X\, dt \leq \frac{\norm{\pi}}{\vert E\vert^{\frac{1}{p'}}} \int_E\bnorm{f(t)}_X\, dt \\
&\leq \norm{\pi} \Bigl(\int_E\bnorm{f(t)}_X^p\, dt \Bigr)^{\frac{1}{p}} = \norm{\pi} \norm{f}_{L^p(G;X)}.
\end{split}
\end{equation}
This shows that $\norm{q_E} \leq \norm{\pi}$.

For any $s \in G$ let $\tau_s \co L^p(G;X) \to L^p(G;X)$ be the isometry 
given by the right regular vector-valued representation:
$\bigl(\tau_s(f)\bigr)(t) = f(ts)$ for any $f \in L^p(G;X)$ and any $t \in G$. 
Let $x \in X$ and $s \in G$. For $E$ as above, we have 
\begin{equation}
\bigl(\tau_s j_{E}(x)\bigr)(t) =\frac{1}{\vert E\vert^{\frac{1}{p}}} \chi_E(ts) \pi(ts)x
= \frac{1}{\vert E\vert^{\frac{1}{p}}} \chi_{Es^{-1}}(t) \pi(t)\pi(s)x 
\label{eq:inclusion_and_shift}
\end{equation}
for all $t\in G$. 
Applying $q_E$ to both sides of the above equality we obtain
\begin{equation}
q_{E} \tau_s j_{E}(x) = \frac{1}{\vert E\vert} \int_E 
\chi_{Es^{-1}}(t) \pi(s)x\, dt \, =\,\frac{\vert Es^{-1}\cap E\vert}{\vert E\vert}\, \pi(s)x. 
\label{Compo}
\end{equation}

Now let $(E_i)_{i\in I}$ be a F\o lner net on $G$ and form, for each $i\in I$, the operators
$j_{E_i}$ and $q_{E_i}$ as above.
Let $\U$ be an ultrafilter on $I$ refining the filter generated by the order 
of $I$. Then let $Y=L^p(G;X)^\ul$ be the ultrapower of $L^p(G;X)$ with respect to $\U$. 
For any $x\in X$, the family $(j_{E_i}(x))_i$ is bounded and we may therefore
define $J \co X \to Y$  by  $J(x) = (j_{E_i}(x))_i^{\bullet}$, the
class of the family $(j_{E_i}(x))_i$ in the ultrapower $Y$. Then we have
$\norm{J} \leq \norm{\pi}$. 
 
Let $(f_i)_{i\in I}$ belong to $\ell^\infty(I; L^p(G;X))$ and 
let $K=\sup_i\norm{f_i}$. We have
$$
\bnorm{\kappa_X q_{E_i}(f_i)}_{X^{**}}\,=\,\norm{q_{E_i}(f_i)}_{X} \leq K \norm{\pi}.
$$
Since bounded sets of $X^{**}$ are $w^*$-compact, we deduce the existence of the weak$^*$ limit 
$w^*-\lim_{\ul} \kappa_X q_{E_i}(f_i)$ in $X^{**}$. Furthermore this limit does only depend on the class
of $(f_i)_{i\in I}$ in $Y$.
Then we may define a map
\begin{equation*}
\begin{array}{cccc}
 Q  \co &         Y        &  \longrightarrow &  X^{**}    \\
        &  (f_i)^{\bullet} &  \longmapsto     &  w^*-\lim_{\ul} \kappa_X q_{E_i}(f_i),\\
\end{array}
\end{equation*}
this map is linear and by the above estimates,
we have $\norm{Q} \leq \norm{\pi}$.

Next for any $s\in G$ we denote by $\widehat{\pi}(s) \co Y \to Y$ the map induced by 
the operators $\tau_s$. That is, for any $(f_i)_{i\in I}$ in $\ell^\infty(I; L^p(G;X))$,
$$
\widehat{\pi}(s)\bigl((f_i)_i^\bullet\bigr)\, =\, 
\bigl((\tau_s (f_i))_i^\bullet\bigr).
$$
It is clear that $\widehat{\pi}\colon G \to \mathcal{B}(Y)$ is an isometric 
(a priori non-continuous) representation. 

If $X$ is Riesz-normed, then $L^p(G;X)$ is Riesz-normed and then the ultrapower
$Y$ is a Riesz-normed space, as explained 
before the statement of Theorem \ref{thm:factorization_amenable_groups}. In this case,
$\tau_s$ is positive and $\widehat{\pi}(s)$ is positive for any $s\in G$. 

Let $x\in X$, $\eta\in X^*$ and $s\in G$. Recall that $\U$ refines the order of $I$.
Then by \eqref{Compo} and the F\o lner condition 
(\ref{Folner2}), we have
\begin{align*}
\big\langle \eta, Q\widehat{\pi}(s)Jx\big\rangle_{X^*,X^{**}} & = \lim_{\U} \big\langle \eta,
q_{E_i}\tau_sj_{E_i}(x)\big\rangle_{X^*,X} \\
& = \lim_\U \frac{\vert E_is^{-1}\cap E_i\vert}{\vert E_i\vert}\, \big\langle \eta, \pi(s)x\big\rangle_{X^*,X}\\
& = \big\langle \eta, \pi(s)x\big\rangle_{X^*,X}.
\end{align*}
This shows the factorization property in part (1).

Let us now prove part (2). Let $x \in X$. 
We fix $s \in G$.
Consider as before a measurable set $E \subset G$ with $0<\vert E\vert <\infty$.
We have seen in \eqref{eq:inclusion_and_shift} that 
$$
\tau_s j_{E}(x) = \frac{1}{\vert E\vert^{\frac{1}{p}}} \chi_{Es^{-1}} \pi(\cdot)\pi(s)x.
$$
Hence,
\begin{align*}
\tau_s j_{E}(x) - j_{E}(x) & = \chi_{Es^{-1}} \frac{\pi(\cdot)\pi(s)x}{\vert E\vert^{\frac{1}{p}}}
- \chi_{E} \frac{\pi(\cdot)x}{\vert E\vert^{\frac{1}{p}}} \\
& = \frac{\bigl(\chi_{Es^{-1}} -\chi_{E}\bigr)\pi(\cdot)x}{\vert E\vert^{\frac{1}{p}}} 
+ \frac{\chi_{Es^{-1}} \pi(\cdot)\bigl(\pi(s)x -x\bigr)}{\vert E\vert^{\frac{1}{p}}}.
\end{align*}
We estimate the norm of each term in $L^p(G;X)$. On the one hand we have
\begin{align*}
\Bgnorm{\frac{\bigl(\chi_{Es^{-1}} -\chi_{E}\bigr)\pi(\cdot)x}{\vert E\vert^{\frac{1}{p}}}}_{L^p(G;X)}^p
& = \frac{1}{\vert E\vert} \int_G \bigl\vert \chi_{Es^{-1}}(t) -\chi_{E}(t)\bigr\vert
\norm{\pi(t)x}_X^p\, dt \\
& \leq \norm{\pi}^p \norm{x}_X^p \frac{\vert Es^{-1}\triangle E\vert}{\vert E\vert}.
\end{align*}
On the other hand one has
\begin{align*}
\Bgnorm{\frac{\chi_{Es^{-1}} \pi(\cdotp)\bigl(\pi(s)x -x\bigr)}{\vert E\vert^{\frac{1}{p}}}}_{L^p(G;X)}^p
& = \frac{1}{\vert E\vert} \int_{Es^{-1}} \bnorm{\pi(t)\bigl(\pi(s)x -x\bigr)}_X^p\, dt\\
& \leq \norm{\pi}^p\norm{\pi(s)x -x}_X^p.
\end{align*}
We deduce that
$$
\bnorm{\tau_s j_{E}(x) - j_{E}(x)}_{L^p(G;X)} 
\leq \norm{\pi} \biggl(\frac{\vert Es^{-1}\triangle 
E\vert^{\frac{1}{p}}}{\vert E\vert^{\frac{1}{p}}}\,\norm{x} 
+ \norm{\pi(s)x -x}\biggr).
$$
Applying this inequality to the sets $E_i$ of the  F\o lner 
net considered in the proof of (1) and using (\ref{Folner1}), we obtain
$$
\bnorm{\widehat{\pi}(s)Jx -Jx}_Y = \lim_{\U}\bnorm{\tau_s j_{E_i}(x) - j_{E_i}(x)}_{L^p(G;X)} 
\leq \norm{\pi} \norm{\pi(s)x-x}_X.
$$
This estimate and the continuity of $\pi(\cdot)x$ show that $\widehat{\pi}(\cdot)Jx$
is continuous at the origin, and hence on $G$.
\end{proof}

In general, the ultrapower $Y$ introduced in the above
proof is too big to expect
the representation $\widehat{\pi}$ to be continuous. 
This defect can be avoided by passing
to a suitable subspace. More precisely we have 
the following corollary 
(relevant only in the case when $G$ is not a 
discrete group). Its proof 
relies on notions and results from the paper \cite{deLGli65}
concerning possibly discontinuous representations.

\begin{cor}
\label{cor:factorization_continuous_representation}
Let $\pi \co G \to \mathcal{B}(X)$ be a bounded 
continuous representation of an amenable locally compact 
group $G$ on a Banach space $X$. 
Suppose $1<p<\infty$. 
\begin{itemize}
\item [(1)] There exist a  Banach space $Y$ in the class $\U(p;X)$, a subspace 
$Z\subset Y$, a continuous isometric representation 
$\widehat{\pi} \co G \to\mathcal{B} (Z)$ and two 
bounded operators $J \co X \to Z$ and $Q \co Z \to X^{**}$ 
such that $\norm{J}\norm{Q} \leq \norm{\pi}^2$ and 
$$
\kappa_X \pi(t)=Q\widehat{\pi}(t)J, 
\qquad \text{for all } t \in G.
$$
\item [(2)] Assume further that $X$ and $X^*$ are uniformly convex.
\begin{itemize}
\item [(2.i)] 
Then $Z$ can be chosen to be 1-complemented in $Y$, i.e. there exists
a contractive projection $P\colon Y\to Y$ with range equal to $Z$. 
\item [(2.ii)] 
Moreover if $X$ is a Riesz-normed space, then $Y$ is a Riesz-normed space, $Z$ is a Riesz-normed subspace of $Y$,
$\widehat{\pi}(t)\colon Z\to Z$ is positive for any
$t\in G$, and the contractive projection $P$ is positive.
\end{itemize}
\end{itemize}
\end{cor}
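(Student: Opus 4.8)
The plan is to take the output of Theorem~\ref{thm:factorization_amenable_groups} and cut the (a priori discontinuous) isometric representation $\widehat{\pi}$ on $Y$ down to its continuous part.

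First I would fix $Y\in\U(p;X)$, $\widehat{\pi}$, $J$, $Q$ as given by Theorem~\ref{thm:factorization_amenable_groups}, so that $\kappa_X\pi(t)=Q\widehat{\pi}(t)J$, $\norm{J}\norm{Q}\le\norm{\pi}^2$, and $t\mapsto\widehat{\pi}(t)Jx$ is continuous for every $x\in X$ by part~(2) of that theorem. Then I would set $Z:=\{y\in Y\co\ t\mapsto\widehat{\pi}(t)y\ \text{is continuous}\}$. Since $\sup_t\norm{\widehat{\pi}(t)}=1$, a routine $3\varepsilon$-estimate shows $Z$ is a closed subspace, and the identity $\widehat{\pi}(st)y=\widehat{\pi}(s)(\widehat{\pi}(t)y)$ together with continuity of multiplication in $G$ shows $Z$ is $\widehat{\pi}$-invariant; hence $\widehat{\pi}$ restricts to a strongly continuous isometric representation on $Z$. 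Theorem~\ref{thm:factorization_amenable_groups}(2) gives $J(X)\subset Z$, so restricting $Q$ to $Z$ yields part~(1) with the same norm bound. This is where the basic theory of possibly discontinuous bounded representations, in the spirit of~\cite{deLGli65}, enters. When $X$ is Riesz-normed, $Y$ is Riesz-normed and each $\widehat{\pi}(t)$ is positive, hence real, by Theorem~\ref{thm:factorization_amenable_groups}; applying~(\ref{2Complex}) to real and imaginary parts shows $Z=Z_\R\oplus iZ_\R$ with $Z_\R=Z\cap Y_\R$, so $Z$ is an ordered, absolutely monotone subspace on which each $\widehat{\pi}(t)$ acts positively.

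For part~(2) I would first record the uniform convexity input: since $X$ and $X^*$ are uniformly convex, each $L^p(\Omega_j;X)$ and each $L^{p'}(\Omega_j;X^*)$ is uniformly convex with modulus depending only on $p$ and the moduli of $X$, $X^*$ (Figiel, as used in Corollary~\ref{uc}); uniform convexity with a uniform modulus passes to ultraproducts, and for the super-reflexive spaces $L^p(\Omega_j;X)$ the ultrapower commutes with duality, so both $Y$ and $Y^*$ are uniformly convex and in particular $Y$ is reflexive. The crux is then to produce a contractive projection $P\co Y\to Y$ with range exactly $Z$. My plan is to average $\widehat{\pi}$ against an approximate identity $(\phi_\alpha)$ of nonnegative $L^1$-densities on $G$ with shrinking supports, and, using the reflexivity of $Y$, to extract a weak operator cluster point $P\in\mathcal{B}(Y)$; by construction $\norm{P}\le 1$, the range of $P$ lies in the (weakly closed) subspace $Z$, and $P$ restricts to the identity on $Z$, so $P$ is a projection onto $Z$ and~(2.i) follows. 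When $X$ is moreover Riesz-normed, the averaging operators are positive (positive densities times the positive operators $\widehat{\pi}(t)$), hence $P$ is positive; then for $z\in Z_\R$ and $\varepsilon>0$ I would pick $w_0\in Y_\R$ with $-w_0\le z\le w_0$ and $\norm{w_0}\le(1+\varepsilon)\norm{z}$ (possible since $Y$ is Riesz-normed) and set $w:=Pw_0\in Z_\R$, which satisfies $-w\le z\le w$ (apply the positive $P$ and use $Pz=z$) and $\norm{w}\le\norm{w_0}\le(1+\varepsilon)\norm{z}$; thus $Z$ is Riesz-normed and $P$ is the desired positive contractive projection, giving~(2.ii).

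I expect the main obstacle to be exactly this construction of $P$: because $\widehat{\pi}$ is only known to be continuous on $Z$ and not on all of $Y$, the averaging defining $P$ must be set up carefully on $Y$ — controlling weak measurability and convergence of the orbit maps, checking that each average has range in $Z$, and verifying that the cluster point restricts to the identity on $Z$. This is the technical input I would draw from the analysis of possibly discontinuous representations on reflexive Banach spaces in~\cite{deLGli65}, the reflexivity of $Y$ being secured by the uniform convexity of $X$ and $X^*$.
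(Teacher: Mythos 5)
Your part (1) and your treatment of the order-theoretic statements coincide with the paper's argument (your direct derivation of $Z=Z_\R\oplus iZ_\R$ from the realness of the operators $\widehat{\pi}(t)$ and (\ref{2Complex}) is a harmless variant of the paper's route, which instead uses that $P$ is real; your deduction that $Z$ is Riesz-normed from the positive contractive projection $P$ is exactly the paper's). The genuine gap is in the crux of (2.i): the construction of $P$ by averaging, $P_\alpha y=\int_G\phi_\alpha(t)\widehat{\pi}(t)y\,dt$, is not defined on $Y$. The ultrapower $Y=L^p(G;X)^{\U}$ is huge and the orbit maps $t\mapsto\widehat{\pi}(t)y$ for general $y\in Y$ are not strongly, nor even weakly, measurable -- this is precisely the obstruction that makes $\widehat{\pi}$ discontinuous on $Y$ in the first place -- so neither a Bochner nor a Pettis (nor a weak$^*$) integral of the orbit against an $L^1$-density exists. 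You flag this as ``the technical input I would draw from \cite{deLGli65}'', but that reference does not repair the integral: its mechanism is different. The paper takes, for each neighbourhood $V$ of the identity, the weak operator closure of $\{\widehat{\pi}(t):t\in V\}$, intersects over $V$, forms the WOT-closed convex hull $\mathcal{S}$ of this intersection, and invokes \cite[Theorem 3.1]{deLGli65} (and its proof), where the uniform convexity of \emph{both} $Y$ and $Y^*$ is what guarantees that $\mathcal{S}$ contains a projection onto $Z$. It is telling that your sketch never actually uses uniform convexity of $Y^*$ (you only need reflexivity of $Y$ to extract WOT cluster points): the place where $Y^*$ enters is exactly the step you have left to the reference, so the hard part of (2.i) is not established by your argument as written.

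The remaining pieces of your plan are sound modulo this gap: if $P_\alpha$ were defined, the computation $\widehat{\pi}(s)P_\alpha=P_{\phi_\alpha(s^{-1}\cdot)}$ would indeed put $\Ran(P_\alpha)$ in $Z$, the weak closedness of $Z$ and the norm convergence $P_\alpha z\to z$ for $z\in Z$ would make any WOT cluster point a contractive projection onto $Z$, and positivity would pass to the limit; likewise your uniform convexity bookkeeping for $Y$ and its dual (via \cite[Examples 2.17, Theorem 2.19]{Khamsi-Sims-Ultra} in the paper) is correct. But as it stands the existence of $P$ -- the content of (2.i) -- rests on an integral that cannot be formed, and the de Leeuw--Glicksberg argument that replaces it is a genuinely different construction, not a technical patch of yours.
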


\begin{proof} 
We start from the space $Y$ and the representation $\widehat{\pi}$
constructed in Theorem \ref{thm:factorization_amenable_groups}. Then we set
$$
Z = \,\bigl\{y \in Y \,:\, s\mapsto \widehat{\pi}(s)y 
\text{ is continuous from } G \text{ into } Y\bigr\}.
$$
It is plain that $Z$ is a closed subspace of $Y$. For any 
$s_0\in G$ and any $y\in Z$, we have $\widehat{\pi}(s)(\widehat{\pi}(s_0)y) = \widehat{\pi}(ss_0)y$
for any $s\in G$. Hence $s\mapsto  \widehat{\pi}(s)(\widehat{\pi(s_0)}y)$ is continuous.
This shows that $Z$ is $\pi$-invariant.
We keep the notation $\widehat{\pi}\colon G\to \mathcal{B}(Z)$ for the 
representation obtained by taking restrictions. 
By construction, this representation is continuous. By part (2) of
Theorem \ref{thm:factorization_amenable_groups}, $Z$ contains the range of $J$.
Then changing $Q$ into its restriction to $Z$, we obtain part (1) of the corollary.

Assume now that $X$ and $X^*$ are uniformly convex. 
As already mentioned in the proof of Corollary \ref{uc},
the Bochner spaces
$L^p(G;X)$ and $L^{p'}(G;X^*)$ are uniformly convex. 
By \cite[Example 2.17]{Khamsi-Sims-Ultra}, the ultrapower $Y=L^p(G;X)^{\U}$ 
is uniformly convex as well. Hence by \cite[Theorem 2.19]{Khamsi-Sims-Ultra}, 
we can isometrically identify the dual of $Y$  
with the ultrapower $L^{p'}(G;X^*)^{\U}$. Applying 
\cite[Example 2.17]{Khamsi-Sims-Ultra} again,  this
dual space $Y^*$ is also uniformly convex.

Following \cite{deLGli65}, let $\mathcal{V}$ be the set of all 
neighbourhoods $V$ of the identity of $G$. Then for any 
such $V$, consider $\overline{\{\widehat\pi(t): t \in V \}}^{wo}$,
the closure being taken in the weak operator topology of $\mathcal{B}(Y)$. Then
let $\mathcal{S}$ be the closure in the weak operator topology of 
the convex hull of $\bigcap_{V \in \mathcal{V}} 
\overline{\{\widehat\pi(t): t \in V \}}^{wo}$. According 
to \cite[Theorem 3.1]{deLGli65} and its proof, the
uniform convexity of $Y$ and of $Y^*$ ensure that 
$\mathcal{S}$ contains a projection $P\colon Y\to Y$
with range equal to $Z$.

Since $\widehat\pi(t)$ is a contraction for any $t\in G$, any element
of $\mathcal{S}$ is a contraction. Consequently, $P$ is a contractive projection.

Assume now that $X$ is a Riesz-normed space. We already 
noticed that the ultrapower $Y$ is a Riesz-normed space and according to
Theorem \ref{thm:factorization_amenable_groups}, $\widehat{\pi}(t)\colon Y\to Y$ is positive 
for any $t\in G$. We deduce that $\langle P(y), y^*\rangle\geq 0$
for any $y\in Y$ and $y^*\in Y^*$. By Hahn-Banach,
this implies that $P$ is positive. (In fact, any element of 
$\mathcal{S}$ is positive.) Since $Y$ is Riesz-normed, this
implies that $P$ is real.

Let $y\in Z$ and let $y_1,y_2\in Y_{\R}$ such that
$y=y_1+iy_2$. Since $Z$ is the range of $P$, we have
$y=P(y_1)+i P(y_2)$. Since $P$ is a real operator, 
$P(y_1)$ and $P(y_2)$ belong to $Y_\R$. Consequently,
$P(y_1)=y_1$ and $P(y_2)=y_2$. Thus $y_1$ and
$y_2$ belong to $Z$. Define $Z_\R = Z\cap Y_\R$.
The above reasoning shows that 
$Z=Z_R\oplus i Z_\R$ and we immediatly deduce that
$Z$ is an ordered subspace of $Y$. It inherits the absolute
monotonicity of $Y$.

Finally consider $y\in Z$ and $\epsilon>0$. Since 
$Y$ is Riesz-normed, 
there exists $y'\in Y$ such that $-y'\leq y\leq y'$
and $\norm{y'}\leq (1+\epsilon)\norm{y}.$ Since $P(y)=y$ and
$P$ is a positive contraction, we both have
$-P(y')\leq y\leq P(y')$
and $\norm{P(y')}\leq (1+\epsilon)\norm{y}.$
Since $P(y')\in Z$, this shows that 
$Z$ is a Riesz-normed space.
\end{proof}

We now consider the special case when $X$ is an $L^p$-space for $1<p<\infty$. 
Then any element in $\U(p,X)$ is an $L^p$-space as well. Moreover the range of a positive
contractive projection on an $L^p$-space is positively and isometrically isomorphic 
to an $L^p$-space (see e.g. \cite[Theorem~4.10]{Ran01}). We therefore deduce the
following corollary.

\begin{cor}\label{DDLp}
Let $1<p<\infty$, let $\Omega$ be a measure space  and 
let $\pi \co G \to \mathcal{B}(L^p(\Omega))$ be a bounded continuous 
representation of an amenable locally compact group $G$. Then there exist a measure space $\Omega'$, a 
continuous isometric representation $\pi' \co G \to \mathcal{B}(L^p(\Omega'))$ such 
that $\pi'(t)$ is positive for any $t\in G$, and two bounded 
operators $J \co L^p(\Omega) \to L^p(\Omega')$ and 
$Q \co L^p(\Omega') \to L^p(\Omega)$ such that 
$\norm{J}\norm{Q} \leq \norm{\pi}^2$ and 
$$
\pi(t) = Q\pi'(t)J, \qquad \text{for all } t \in G.
$$
\end{cor}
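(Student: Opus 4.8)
The plan is to obtain Corollary~\ref{DDLp} as the specialization of Corollary~\ref{cor:factorization_continuous_representation} to $X=L^p(\Omega)$, and then to remove the two features of that general statement which are not phrased purely in terms of genuine $L^p$-spaces: the appearance of the bidual $X^{**}$ on the target of $Q$, and the fact that the isometric representation there lives on a $1$-complemented subspace $Z$ of an $L^p$-space rather than on an $L^p$-space itself.

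First I would verify the hypotheses of part~(2.ii) of Corollary~\ref{cor:factorization_continuous_representation}. Since $1<p<\infty$, Clarkson's inequalities give that $L^p(\Omega)$ and its dual $L^{p'}(\Omega)$ are uniformly convex (see e.g.\ \cite{LinTza79}), and $L^p(\Omega)$ is a Banach lattice, hence a Riesz-normed space. The corollary then supplies a Banach space $Y$ in the class $\U(p;L^p(\Omega))$, a closed subspace $Z\subset Y$ that is the range of a positive contractive projection $P\co Y\to Y$, a continuous isometric representation $\widehat{\pi}\co G\to\mathcal{B}(Z)$ with each $\widehat{\pi}(t)$ positive, and bounded operators $J\co L^p(\Omega)\to Z$ and $Q\co Z\to L^p(\Omega)^{**}$ with $\norm{J}\norm{Q}\le\norm{\pi}^2$ and $\kappa_X\pi(t)=Q\widehat{\pi}(t)J$ for all $t\in G$. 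Here $Y$ is an ultraproduct of spaces of the form $L^p(\Omega_j;L^p(\Omega))\cong L^p(\Omega_j\times\Omega)$, and since ultraproducts of $L^p$-spaces are $L^p$-spaces (\cite[Example 2.20]{Khamsi-Sims-Ultra}), $Y$ is itself an $L^p$-space.

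Next I would apply the structure theorem for ranges of positive contractive projections on $L^p$: by \cite[Theorem~4.10]{Ran01} there are a measure space $\Omega'$ and a positive isometric isomorphism $\Phi\co Z\to L^p(\Omega')$. Put $\pi'(t)=\Phi\widehat{\pi}(t)\Phi^{-1}$; since $\Phi$ and $\Phi^{-1}$ are positive isometries and $\widehat{\pi}$ is a continuous isometric positive representation, $\pi'\co G\to\mathcal{B}(L^p(\Omega'))$ is a continuous isometric representation with each $\pi'(t)$ positive. Finally, reflexivity of $L^p(\Omega)$ (again because $1<p<\infty$) makes $\kappa_X$ an isometric isomorphism onto the bidual, so setting $\widetilde{J}=\Phi J\co L^p(\Omega)\to L^p(\Omega')$ and $\widetilde{Q}=\kappa_X^{-1}Q\Phi^{-1}\co L^p(\Omega')\to L^p(\Omega)$ yields $\pi(t)=\widetilde{Q}\,\pi'(t)\,\widetilde{J}$ for all $t\in G$, while $\norm{\widetilde{J}}\,\norm{\widetilde{Q}}\le\norm{J}\,\norm{Q}\le\norm{\pi}^2$ because $\Phi$ and $\kappa_X$ are isometric; this is precisely the claimed statement.

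I do not expect a genuine obstacle here: the argument is an assembly of Corollary~\ref{cor:factorization_continuous_representation} with two standard external facts, namely stability of the class of $L^p$-spaces under Bochner spaces and ultraproducts, and the description of $1$-complemented positive subspaces of $L^p$ as $L^p$-spaces. The only points requiring a little care are the bookkeeping of the norm bound through the maps $\Phi$ and $\kappa_X^{-1}$, and the reflexivity identification $L^p(\Omega)^{**}=L^p(\Omega)$, which is what allows $Q$ to be routed back into $L^p(\Omega)$ rather than its bidual.
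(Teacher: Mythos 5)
Your proposal is correct and follows essentially the same route as the paper: specialize Corollary~\ref{cor:factorization_continuous_representation} (part (2)) to $X=L^p(\Omega)$, note that every element of $\U(p;L^p(\Omega))$ is an $L^p$-space, and use \cite[Theorem~4.10]{Ran01} to identify the range $Z$ of the positive contractive projection positively and isometrically with some $L^p(\Omega')$, with reflexivity of $L^p(\Omega)$ disposing of the bidual. The only difference is that you spell out the verification of the hypotheses and the norm bookkeeping, which the paper leaves implicit.
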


\begin{remark}\label{alternative} (1)$\,$
In the case when $G=\Z$, the above corollary means the following: whenever
$T\colon L^p(\Omega)\to L^p(\Omega)$ is an invertible operator such that
$\sup_{n\in\Z}\norm{T^n}\,<\infty$, then there exist a measure space $\Omega'$,
an isometric isomorphism $U\colon L^p(\Omega')\to L^p(\Omega')$
and bounded operators 
$J \co L^p(\Omega) \to L^p(\Omega')$ and 
$Q \co L^p(\Omega') \to L^p(\Omega)$ such that $T^n = QU^nJ$
for any $n\in \Z$. This result was shown to the first and third author 
by \'Eric Ricard in 2011 as a way to improve \cite[Theorem 4.8]{ArhLeM11}. 
The argument in the proof of the first part of Theorem 
\ref{thm:factorization_amenable_groups} is an extension 
of Ricard's original argument.

\smallskip
(2)$\,$
In the case when $X$ is an $L^p$-space, Theorem 
\ref{thm:positive_dilation} can be obtained by combining 
the above result (Corollary \ref{DDLp} for $G=\Z$), \cite[Theorem 6.4]{Mer12} and
\cite[Theorem 4.8]{ArhLeM11}. 

Likewise, Theorem \ref{Th dilation Delta semigroup} in the case 
when $X$ is an $L^p$-space can be obtained by combining 
Corollary \ref{DDLp} for $G=\R$ and \cite[Section 5]{FroWei06}. Details are left 
to the reader.

\end{remark}


We now derive analogues of Dixmier's Theorem in this 
Banach space setting.

\begin{thm} 
\label{Th of similarity}
Let $G$ be an amenable locally compact group, let $X$ be a reflexive Banach space and let
$\pi \co G \to \mathcal{B}(X)$ be a bounded continuous 
representation of $G$ on $X$. Suppose $1<p<\infty$. 
Then there exist a Banach space $\widetilde{X}$ which is a 
quotient of a subspace of an element of $\U(p;X)$,
and an isomorphism $S \co X \to \widetilde{X}$ such that
$\norm{S}\norm{S^{-1}} \le \norm{\pi}^2$ and 
\begin{equation*}
\begin{array}{cccc}
       &   G  &  \longrightarrow   &  \mathcal{B}(\widetilde{X})  \\
           &  t   &  \longmapsto       &  S\pi(t)S^{-1}  \\
\end{array}
\end{equation*}
is an isometric representation of $G$ on $\widetilde{X}$.
\end{thm}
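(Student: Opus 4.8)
The plan is to combine Theorem~\ref{thm:factorization_amenable_groups} with Proposition~\ref{compression}, the key extra observation being that a compression of an isometric \emph{group} representation is automatically isometric.

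First I would use the reflexivity of $X$: the canonical embedding $\kappa_X \co X \to X^{**}$ is then an isometric isomorphism, so part~(1) of Theorem~\ref{thm:factorization_amenable_groups} provides a Banach space $Y \in \U(p;X)$, an isometric representation $\widehat{\pi} \co G \to \mathcal{B}(Y)$, and bounded operators $J \co X \to Y$, $Q_0 \co Y \to X^{**}$ with $\norm{J}\,\norm{Q_0} \le \norm{\pi}^2$ and $\kappa_X \pi(t) = Q_0 \widehat{\pi}(t) J$ for all $t \in G$. Putting $Q = \kappa_X^{-1} Q_0 \co Y \to X$ yields $\pi(t) = Q\widehat{\pi}(t)J$ with $\norm{J}\,\norm{Q} \le \norm{\pi}^2$.

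Next I would apply Proposition~\ref{compression} (viewing $G$ as an algebraic semigroup) to $\pi$, $\widehat{\pi}$, $J$, $Q$. This gives $\widehat{\pi}$-invariant closed subspaces $F \subset E \subset Y$ and an isomorphism $S \co X \to \widetilde{X} := E/F$ with $\norm{S}\,\norm{S^{-1}} \le \norm{Q}\,\norm{J} \le \norm{\pi}^2$ such that $S\pi(t)S^{-1} = \widetilde{\widehat{\pi}}(t)$ for all $t$, where $\widetilde{\widehat{\pi}}$ is the compressed representation. Since $E$ is a closed subspace of $Y \in \U(p;X)$, the space $\widetilde{X}$ is a quotient of a subspace of an element of $\U(p;X)$, as required; moreover $t\mapsto S\pi(t)S^{-1}\xi$ is continuous on $G$ for every $\xi \in \widetilde{X}$ because $\pi$ is continuous and $S, S^{-1}$ are bounded.

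Finally I would check that each $\widetilde{\widehat{\pi}}(t)$ is isometric rather than merely contractive — the one point where the group structure is essential. A compression of a contraction is a contraction: for $\xi = q(e) \in E/F$ one has $\norm{\widetilde{T}\xi} = \inf_{f\in F}\norm{Te+f} \le \inf_{f\in F}\norm{T(e+f)} \le \norm{\xi}$, using $T(F)\subset F$ and $\norm{T}\le 1$. Applying this to both $\widehat{\pi}(t)$ and its inverse $\widehat{\pi}(t^{-1})$, both of which are isometries, shows that $\widetilde{\widehat{\pi}}(t)$ and $\widetilde{\widehat{\pi}}(t)^{-1} = \widetilde{\widehat{\pi}}(t^{-1})$ are contractions on $\widetilde{X}$; hence $\norm{\xi} = \norm{\widetilde{\widehat{\pi}}(t)^{-1}\widetilde{\widehat{\pi}}(t)\xi} \le \norm{\widetilde{\widehat{\pi}}(t)\xi} \le \norm{\xi}$ for all $\xi$, so $\widetilde{\widehat{\pi}}(t)$ is an isometry. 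The real content sits in Theorem~\ref{thm:factorization_amenable_groups} (the F\o lner-net/ultraproduct construction); given that, no serious obstacle remains here beyond this last isometry argument.
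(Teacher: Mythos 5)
Your proposal is correct and follows essentially the same route as the paper: Theorem~\ref{thm:factorization_amenable_groups} (with reflexivity identifying $X^{**}$ with $X$) followed by Proposition~\ref{compression}, and then the observation that a compressed contractive group representation is automatically isometric via $\widetilde{\widehat{\pi}}(t^{-1})\widetilde{\widehat{\pi}}(t)=\Id$. The paper invokes this last point implicitly (having noted in Section~\ref{sec:representations} that a representation is isometric iff it is contractive), whereas you spell it out; otherwise the arguments coincide.
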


\begin{proof}
By Theorem~\ref{thm:factorization_amenable_groups} there exist a space $Y$ in $\U(p;X)$, an isometric 
representation $\widehat{\pi} \co G \to \mathcal{B}(Y)$ and two bounded operators 
$J \co X \to Y$ and $Q \co Y \to X$ such that $\norm{J}\norm{Q} \leq \norm{\pi}^2$ and 
$$
\pi(t)=Q\widehat{\pi}(t)J, \qquad \text{for all } t \in G. 
$$
According to Proposition~\ref{compression}, there exist two 
$\widehat{\pi}$-invariant closed subspaces $F \subset E \subset Y$ and an isomorphism 
$S \co X \to E/F$ with $\norm{S}\norm{S^{-1}} \le \norm{J} \norm{Q}$ 
such that the compressed representation 
$\widetilde{\widehat{\pi}}(t) \co E/F \to E/F$ of $\widehat{\pi}(t)$ satisfies
$$
\pi(t) = S^{-1} \widetilde{\widehat{\pi}}(t) S, \qquad \text{for all } t \in G. 
$$
Each $\widetilde{\widehat{\pi}}(t)$ is a contraction hence we obtain the result
with $\widetilde{X}=E/F$.
\end{proof}

Note that in the above situation, the representation $t\mapsto S\pi(t)S^{-1}$
is necessarily continuous, although $\widehat{\pi}$ may be discontinuous.

Specializing to $L^p$-spaces, we obtain the following corollary.

\begin{cor} 
\label{Cor of similarity}
Let $G$ be an amenable locally compact group, let $\Omega$ be a measure space, let
$1<p<\infty$ and let 
$\pi \co G \to \mathcal{B}(L^p(\Omega))$ be a bounded continuous representation.  
Then there exist an $SQ_p$-space $\widetilde{X}$
and an isomorphism $S \co X \to \widetilde{X}$ such that 
$\norm{S}\norm{S^{-1}} \le \norm{\pi}^2$ and
\begin{equation*}
\begin{array}{cccc}
       &   G  &  \longrightarrow   &  \mathcal{B}(\widetilde{X})  \\
           &  t   &  \longmapsto       &  S\pi(t)S^{-1}  \\
\end{array}
\end{equation*}
is an isometric representation of $G$ on $\widetilde{X}$.
\end{cor}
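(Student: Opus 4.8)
The plan is to obtain this corollary as the special case of Theorem~\ref{Th of similarity} in which the underlying Banach space is $X=L^p(\Omega)$. Since $1<p<\infty$, the space $L^p(\Omega)$ is reflexive, so the hypotheses of Theorem~\ref{Th of similarity} are satisfied. Applying that theorem produces a Banach space $\widetilde{X}$ which, by construction, is a quotient of a closed subspace of some space $Y$ in the class $\U(p;X)$, together with an isomorphism $S\co X\to\widetilde{X}$ with $\norm{S}\norm{S^{-1}}\le\norm{\pi}^2$ such that $t\mapsto S\pi(t)S^{-1}$ is an isometric representation of $G$ on $\widetilde{X}$. Continuity of this conjugated representation is automatic, since it is similar to the continuous representation $\pi$ (this is the remark following Theorem~\ref{Th of similarity}).

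The only additional point to check is that $\widetilde{X}$ is an $SQ_p$-space, i.e. that the ambient space $Y$ may be taken to be an $L^p$-space. By the definition of the class $\U(p;X)$, the space $Y$ is an ultraproduct of a family of the form $\bigl(L^p(\Omega_j;L^p(\Omega))\bigr)_{j\in I}$ for suitable measure spaces $\Omega_j$. Each Bochner space $L^p(\Omega_j;L^p(\Omega))$ is isometrically isomorphic to $L^p(\Omega_j\times\Omega)$, hence is an $L^p$-space, and an ultraproduct of a family of $L^p$-spaces is again, isometrically, an $L^p$-space (as recalled before Corollary~\ref{DDLp}, see \cite[Example 2.20]{Khamsi-Sims-Ultra}). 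Thus $Y$ is an $L^p$-space, and $\widetilde{X}$, being a quotient of a closed subspace of $Y$, is an $SQ_p$-space by definition. The bound $\norm{S}\norm{S^{-1}}\le\norm{\pi}^2$ and the isometry property are inherited unchanged.

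I do not expect a genuine obstacle here: this corollary is a routine specialization, and the only thing it really uses beyond Theorem~\ref{Th of similarity} is the stability of the class of $L^p$-spaces under forming Bochner spaces $L^p(\,\cdot\,;L^p)$ and under ultraproducts. The substantive work — the F\o lner-averaging factorization of $\kappa_X\pi$ through an ultrapower of $L^p(G;X)$ carried out in Theorem~\ref{thm:factorization_amenable_groups}, and the passage from a factorization to a similarity via the compression Proposition~\ref{compression} in Theorem~\ref{Th of similarity} — has already been done.
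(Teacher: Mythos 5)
Your proof is correct and is exactly the paper's intended argument: the corollary is obtained by specializing Theorem~\ref{Th of similarity} to the reflexive space $X=L^p(\Omega)$, noting that any element of $\U(p;L^p(\Omega))$ is an ultraproduct of spaces $L^p(\Omega_j;L^p(\Omega))\cong L^p(\Omega_j\times\Omega)$ and hence an $L^p$-space, so that $\widetilde{X}$ is an $SQ_p$-space by definition.
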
 

Note that the class of $SQ_2$-spaces coincide with the class of
Hilbert spaces. Hence Dixmier's Theorem corresponds to
the case $p=2$ in the above corollary.

Except when $p=2$ and $X$ is a Hilbert space, Theorem  
\ref{thm:factorization_amenable_groups} is a stronger (more precise) result than 
Theorem \ref{Th of similarity}.

We conclude this paper with an application to unconditional bases (more generally to
unconditional decompositions). Consider a 
Schauder decomposition $(X_k)_{k\geq 1}$ of a Banach space X, and let
$(Q_k)_{k \geq 1}$ be the sequence of associated projections onto $X_k$. Thus for any
$j\not=k$, we have $X_j\subset {\rm Ker}(Q_k)$ 
and ${\rm Ran}(Q_k)=X_k$. Moreover 
$$
x =\sum_{k=1}^{\infty} Q_k x,\qquad \hbox{ for all } x \in X.
$$
The Schauder decomposition is called
unconditional if the above series 
is unconditionally convergent for any $x \in X$.
Recall that this holds true if and only if there exists a positive 
constant $C$ such that
\begin{equation}\label{uncond}
\bgnorm{\sum_{k=1}^{n}\omega_k Q_kx}_{X} \leq C\bgnorm{\sum_{k=1}^{n} Q_kx}_{X}
\end{equation}
for any choices of $\omega_k \in\{-1,1\}$, $x \in X$ and $n \in \mathbb{N}$. 
We necessarily have $C\geq 1$ and the smallest $C \geq 1$ satisfying (\ref{uncond}) 
is called the unconditional constant of the decomposition.

\begin{thm}
Let $(X_k)_{k\geq 1}$ be an unconditional decomposition of a 
reflexive Banach space $X$ with unconditional constant $C\geq 1$. 
Suppose $1<p<\infty$. Then there exist a Banach space $\widetilde{X}$ which is  
a quotient of a subspace of an element of $\U(p;X)$ and an isomorphism 
$S \co X \to \widetilde{X}$ with $\norm{S}\norm{S^{-1}} \leq C^2$ 
such that the unconditional constant of the unconditional decomposition 
$(S(X_k))_{k \geq 1}$ of  $\widetilde{X}$ is equal to $1$.
\end{thm}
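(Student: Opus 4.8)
The plan is to encode the unconditional decomposition as a bounded representation of a discrete amenable group and then to invoke Theorem~\ref{Th of similarity}.

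First I would let $G$ be the group of all sign sequences $\omega=(\omega_k)_{k\ge1}\in\{-1,1\}^{\nn}$ with $\omega_k=1$ for all but finitely many $k$, equipped with coordinatewise multiplication; this is a countable abelian group, hence a discrete amenable locally compact group. Using the projections $Q_k$ associated with the decomposition, I would define $\pi\co G\to\mathcal{B}(X)$ by $\pi(\omega)x=\sum_{k\ge1}\omega_kQ_kx$. For a fixed $\omega\in G$ this series converges, since it differs from the convergent series $\sum_kQ_kx=x$ only by the finite sum $2\sum_{k\,:\,\omega_k=-1}Q_kx$; moreover, taking $n$ larger than $\max\{k:\omega_k=-1\}$ in (\ref{uncond}) and letting $n\to\infty$ gives $\norm{\pi(\omega)x}\le C\norm{x}$ for all $x$. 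Hence each $\pi(\omega)$ is bounded with norm at most $C$, and the relations $Q_jQ_k=\delta_{jk}Q_k$ yield $\pi(\omega)\pi(\eta)=\pi(\omega\eta)$ and $\pi(e)=\Id_X$, so that $\pi$ is a representation with $\norm{\pi}\le C$; continuity is automatic since $G$ is discrete.

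Since $X$ is reflexive, Theorem~\ref{Th of similarity} then provides a Banach space $\widetilde{X}$ which is a quotient of a subspace of an element of $\U(p;X)$, together with an isomorphism $S\co X\to\widetilde{X}$ satisfying $\norm{S}\norm{S^{-1}}\le\norm{\pi}^2\le C^2$ and such that $t\mapsto S\pi(t)S^{-1}$ is an isometric representation of $G$ on $\widetilde{X}$. Being the image of a Schauder decomposition under the isomorphism $S$, the family $(S(X_k))_{k\ge1}$ is a Schauder decomposition of $\widetilde{X}$, with associated projections $\widetilde{Q}_k=SQ_kS^{-1}$; and since $\pi(\omega)=\sum_k\omega_kQ_k$ in the strong operator topology, we have $S\pi(\omega)S^{-1}=\sum_k\omega_k\widetilde{Q}_k$ strongly for every $\omega\in G$.

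Finally I would compute the unconditional constant of $(S(X_k))_{k\ge1}$. Given $\tilde x\in\widetilde{X}$, an integer $n\ge1$ and signs $\omega_1,\dots,\omega_n\in\{-1,1\}$, I would extend them to $\omega\in G$ by $\omega_k=1$ for $k>n$, and set $\tilde y=\sum_{k=1}^n\widetilde{Q}_k\tilde x$. Then $\widetilde{Q}_k\tilde y=\widetilde{Q}_k\tilde x$ for $k\le n$ and $\widetilde{Q}_k\tilde y=0$ for $k>n$, so $S\pi(\omega)S^{-1}\tilde y=\sum_{k\ge1}\omega_k\widetilde{Q}_k\tilde y=\sum_{k=1}^n\omega_k\widetilde{Q}_k\tilde x$; as $S\pi(\omega)S^{-1}$ is an isometry, this gives $\norm{\sum_{k=1}^n\omega_k\widetilde{Q}_k\tilde x}=\norm{\tilde y}=\norm{\sum_{k=1}^n\widetilde{Q}_k\tilde x}$. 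Thus (\ref{uncond}) holds for $(S(X_k))_{k\ge1}$ with constant $1$ (indeed with equality), so $(S(X_k))_{k\ge1}$ is an unconditional decomposition and, since an unconditional constant is always at least $1$, its constant equals $1$. I do not expect any real obstacle here: the only substantive ingredient is Theorem~\ref{Th of similarity}, and the sole point requiring a little care is the use of \emph{finitely supported} sign patterns, which is exactly what makes the series defining $\pi(\omega)$ reduce to the finite sums appearing in the definition (\ref{uncond}) of the unconditional constant.
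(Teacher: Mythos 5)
Your proposal is correct and follows essentially the same route as the paper: encode the sign changes as a bounded representation $\pi$ of an amenable group with $\norm{\pi}\le C$ and apply Theorem~\ref{Th of similarity}. The only deviation is that you use the discrete group of finitely supported sign sequences instead of the compact group $\{-1,1\}^{\nn}$, which makes continuity of $\pi$ automatic at the (trivial) cost of checking convergence of the finitely perturbed series --- a minor technical variant, not a different argument.
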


\begin{proof}
For any $\omega \in \{-1,1\}^\mathbb{N}$, we can consider the bounded linear operator
\begin{equation*}
\begin{array}{cccc}
   \pi(\omega)   \co &  X   &  \longrightarrow   &  X  \\
           &  x=\sum_{k} Q_k(x)   &  \longmapsto       &  \sum_{k}\omega_k Q_k(x).  \\
\end{array}
\end{equation*}
Then, we obtain a bounded continuous representation 
$\pi \co \{-1,1\}^\mathbb{N} \to \mathcal{B}(X)$ of 
the compact group $\{-1,1\}^\mathbb{N}$, with $\norm{\pi} \leq C$. 
By Theorem \ref{Th of similarity}, there exist a Banach space 
$\widetilde{X}$ which is a quotient of a subspace of an element 
of $\U(p;X)$ and an isomorphism $S \co X \to \widetilde{X}$ with 
$\norm{S}\norm{S^{-1}} \le C^2$ such that
	\begin{equation*}
\begin{array}{cccc}
       &   \{-1,1\}^\mathbb{N}  &  \longrightarrow   &  \mathcal{B}(\widetilde{X})  \\
           &   \omega  &  \longmapsto       &  S\pi(\omega)S^{-1}  \\
\end{array}
\end{equation*}
is an isometric representation of the group $\{-1,1\}^\mathbb{N}$ 
on the Banach space $\widetilde{X}$. The sequence $(S(X_k))_{k \geq 1}$
is an unconditional decomposition of $\widetilde{X}$ whose associated 
projections are equal to $SQ_kS^{-1}$. Then it is easy to check that 
its unconditional constant is equal to $1$.
\end{proof}

If $X=H$ is a Hilbert space then we recover the classical result (see e.g.
\cite[Theorem 3.1.4]{Nikolski-Book2}) which says that if $(H_k)_{k \geq 1}$ 
is an unconditional decomposition of $H$, 
then there exists an isomorphism $S \co H \to H$ such that 
$(S(H_k))_{k \geq 1}$ is an orthogonal decomposition.

\bigskip
\textbf{Acknowledgement}. The authors would like to thank
the organizers of the conference ``GDR 2013 Analyse Fonctionnelle, 
 Harmonique et Probabilit\'es'', where
this project started, and \'Eric Ricard for his decisive contribution 
to Section 6. They also thank  Alexandre Nou and Uwe Franz for some fruiful discussions.
The first and third named authors are supported by the research program ANR 2011 BS01 008 01.
The second named author is supported by a scholarship of the
``Landesgraduiertenf\"orderung Baden-W\"urttemberg".
They finally thank the anonymous referee for his valuable suggestions 
which improved the presentation of the paper.


\bibliographystyle{amsalpha}
\bibliography{Ref2-AFL}{}

\providecommand{\bysame}{\leavevmode\hbox to3em{\hrulefill}\thinspace}
\providecommand{\MR}{\relax\ifhmode\unskip\space\fi MR }
\providecommand{\MRhref}[2]{%
  \href{http://www.ams.org/mathscinet-getitem?mr=#1}{#2}
}
\providecommand{\href}[2]{#2}
\begin{thebibliography}{CDMY96}

\bibitem[ALM14]{ArhLeM11}
C.~Arhancet and C.~Le~Merdy, \emph{Dilation of {R}itt operators on
  {$L^p$}-spaces}, Isra\"el J. Math. \textbf{201} (2014), no.~1, 373--414.

\bibitem[Are04]{AArendt-04-Survey}
W.~Arendt, \emph{Semigroups and evolution equations: functional calculus,
  regularity and kernel estimates}, Evolutionary equations. {V}ol. {I}, Handb.
  Differ. Equ., North-Holland, Amsterdam, 2004, pp.~1--85.

\bibitem[Arh13]{Arh11}
C.~Arhancet, \emph{Square functions for {R}itt operators on noncommutative
  {$L^p$}-spaces}, Math. Scand. \textbf{113} (2013), no.~2, 292--319.

\bibitem[BGM89]{BGM89}
E.~Berkson, T.A. Gillespie, and P.S. Muhly, \emph{Generalized analyticity in
  {UMD} spaces}, Ark. Mat. \textbf{27} (1989), no.~1, 1--14.

\bibitem[Blu01a]{Blu01b}
S.~Blunck, \emph{Analyticity and discrete maximal regularity on
  {$L_p$}-spaces}, J. Funct. Anal. \textbf{183} (2001), no.~1, 211--230.

\bibitem[Blu01b]{Blu01}
\bysame, \emph{Maximal regularity of discrete and continuous time evolution
  equations}, Studia Math. \textbf{146} (2001), no.~2, 157--176.

\bibitem[BR84]{BR84}
C.~J.~K. Batty and D.~W. Robinson, \emph{Positive one-parameter semigroups on
  ordered {B}anach spaces}, Acta Appl. Math. \textbf{2} (1984), no.~3-4,
  221--296.

\bibitem[Bur01]{Bur01}
D.~L. Burkholder, \emph{Martingales and singular integrals in {B}anach spaces},
  Handbook of the geometry of {B}anach spaces, {V}ol. {I}, North-Holland,
  Amsterdam, 2001, pp.~233--269.

\bibitem[CDMY96]{CDMY96}
M.~Cowling, I.~Doust, A.~McIntosh, and A.~Yagi, \emph{Banach space operators
  with a bounded {$H^\infty$} functional calculus}, J. Austral. Math. Soc. Ser.
  A \textbf{60} (1996), no.~1, 51--89.

\bibitem[DF93]{Defant-Book}
A.~Defant and K.~Floret, \emph{Tensor norms and operator ideals}, North-Holland
  Mathematics Studies, vol. 176, North-Holland Publishing Co., Amsterdam, 1993.

\bibitem[Dix50]{Dix}
J.~Dixmier, \emph{Les moyennes invariantes dans les semi-groups et leurs
  applications}, Acta Sci. Math. Szeged \textbf{12} (1950), no.~Leopoldo Fejer
  et Frederico Riesz LXX annos natis dedicatus, Pars A, 213--227.

\bibitem[DJT95]{DJT95}
J.~Diestel, H.~Jarchow, and A.~Tonge, \emph{Absolutely summing operators},
  Cambridge Studies in Advanced Mathematics, vol.~43, Cambridge University
  Press, Cambridge, 1995.

\bibitem[dLG65]{deLGli65}
K.~de~Leeuw and I.~Glicksberg, \emph{The decomposition of certain group
  representations}, J. Analyse Math. \textbf{15} (1965), 135--192.

\bibitem[Dun11]{Dun11}
N.~Dungey, \emph{Subordinated discrete semigroups of operators}, Trans. Amer.
  Math. Soc. \textbf{363} (2011), no.~4, 1721--1741.

\bibitem[Fac14]{Fac13c}
S.~Fackler, \emph{On the structure of semigroups on {$L_p$} with a bounded
  {$H^\infty$}-calculus}, Bull. Lond. Math. Soc. \textbf{46} (2014), no.~5,
  1063--1076.

\bibitem[Fac15]{Fac15}
\bysame, \emph{Regularity properties of sectorial operators: Extrapolation,
  counterexamples and generic classes}, Ph.D. thesis, Universit{\"a}t Ulm,
  2015.

\bibitem[Fig76]{Figiel1}
T.~Figiel, \emph{On the moduli of convexity and smoothness}, Studia Math.
  \textbf{56} (1976), no.~2, 121--155.

\bibitem[Fig80]{Figiel2}
\bysame, \emph{Uniformly convex norms on {B}anach lattices}, Studia Math.
  \textbf{68} (1980), no.~3, 215--247.

\bibitem[FW06]{FroWei06}
A.M. Fr{{\"o}}hlich and L.~Weis, \emph{{$H^\infty$} calculus and dilations},
  Bull. Soc. Math. France \textbf{134} (2006), no.~4, 487--508.

\bibitem[Gol85]{Gold}
J.~A. Goldstein, \emph{Semigroups of linear operators and applications}, Oxford
  Mathematical Monographs, The Clarendon Press, Oxford University Press, New
  York, 1985.

\bibitem[Haa06]{Haa06}
M.~Haase, \emph{The functional calculus for sectorial operators}, Operator
  Theory: Advances and Applications, vol. 169, Birkh{\"a}user Verlag, Basel,
  2006.

\bibitem[Hei81]{Heinrich}
S.~Heinrich, \emph{Ultraproducts of {$L_{1}$}-predual spaces}, Fund. Math.
  \textbf{113} (1981), no.~3, 221--234.

\bibitem[HP98]{HP}
M.~Hieber and J.~Pr{\"u}ss, \emph{Functional calculi for linear operators in
  vector-valued {$L^p$}-spaces via the transference principle}, Adv.
  Differential Equations \textbf{3} (1998), no.~6, 847--872.

\bibitem[HT10]{HT10}
M.~Haase and Y.~Tomilov, \emph{Domain characterizations of certain functions of
  power-bounded operators}, Studia Math. \textbf{196} (2010), no.~3, 265--288.

\bibitem[Jan97]{Jan97}
S.~Janson, \emph{Gaussian {H}ilbert spaces}, Cambridge Tracts in Mathematics,
  vol. 129, Cambridge University Press, Cambridge, 1997.

\bibitem[JLMX06]{JMX}
M.~Junge, C.~Le~Merdy, and Q.~Xu, \emph{{$H^\infty$} functional calculus and
  square functions on noncommutative {$L^p$}-spaces}, Ast\'erisque (2006),
  no.~305, vi+138.

\bibitem[Kre85]{Kr85}
U.~Krengel, \emph{Ergodic theorems}, de Gruyter Studies in Mathematics, vol.~6,
  Walter de Gruyter \& Co., Berlin, 1985, With a supplement by Antoine Brunel.

\bibitem[KS01]{Khamsi-Sims-Ultra}
M.~A. Khamsi and B.~Sims, \emph{Ultra-methods in metric fixed point theory},
  Handbook of metric fixed point theory, Kluwer Acad. Publ., Dordrecht, 2001,
  pp.~177--199.

\bibitem[KW01]{KW01}
N.~J. Kalton and L.~Weis, \emph{The {$H^\infty$}-calculus and sums of closed
  operators}, Math. Ann. \textbf{321} (2001), no.~2, 319--345.

\bibitem[KW04]{KuW04}
P.C. Kunstmann and L.~Weis, \emph{Maximal {$L_p$}-regularity for parabolic
  equations, {F}ourier multiplier theorems and {$H^\infty$}-functional
  calculus}, Functional analytic methods for evolution equations, Lecture Notes
  in Math., vol. 1855, Springer, Berlin, 2004, pp.~65--311.

\bibitem[KW14]{KW04}
N.~J. Kalton and L.~Weis, \emph{The $h^{\infty}$-functional calculus and square
  function estimates}, 2014, arXiv: \url{http://arxiv.org/abs/1411.0472}.

\bibitem[Kwa74]{Kwa}
S.~Kwapie{\'n}, \emph{On {B}anach spaces containing {$c_{0}$}}, Studia Math.
  \textbf{52} (1974), A supplement to the paper by J. Hoffmann-J{\o}rgensen:
  ``Sums of independent Banach space valued random variables'' (Studia Math.
  {{\bf{5}}2} (1974), 159--186).

\bibitem[LM98a]{LeMerdy-SurveymMaximalRegularity}
C.~Le~Merdy, \emph{{$H^\infty$}-functional calculus and applications to maximal
  regularity}, Semi-groupes d'op\'erateurs et calcul fonctionnel, Publ. Math.
  UFR Sci. Tech. Besan\c con, vol.~16, Univ. Franche-Comt\'e, Besan\c con,
  1998, pp.~41--77.

\bibitem[LM98b]{Mer98}
\bysame, \emph{The similarity problem for bounded analytic semigroups on
  {H}ilbert space}, Semigroup Forum \textbf{56} (1998), no.~2, 205--224.

\bibitem[LM07]{LeMerdy-SurveySquareFunctions}
\bysame, \emph{Square functions, bounded analytic semigroups, and
  applications}, Perspectives in operator theory, Banach Center Publ., vol.~75,
  Polish Acad. Sci., Warsaw, 2007, pp.~191--220.

\bibitem[LM10]{Lem10}
\bysame, \emph{{$\gamma$}-{B}ounded representations of amenable groups}, Adv.
  Math. \textbf{224} (2010), no.~4, 1641--1671.

\bibitem[LM14]{Mer12}
\bysame, \emph{{$H^\infty$} functional calculus and square function estimates
  for {R}itt operators}, Rev. Mat. Iberoam. \textbf{30} (2014), no.~4,
  1149--1190.

\bibitem[LMS01]{LeMSim01}
C.~Le~Merdy and A.~Simard, \emph{Sums of commuting operators with maximal
  regularity}, Studia Math. \textbf{147} (2001), no.~2, 103--118.

\bibitem[LMX12]{LeMXu12}
C.~Le~Merdy and Q.~Xu, \emph{Maximal theorems and square functions for analytic
  operators on {$L^p$}-spaces}, J. Lond. Math. Soc. (2) \textbf{86} (2012),
  no.~2, 343--365.

\bibitem[LT79]{LinTza79}
J.~Lindenstrauss and L.~Tzafriri, \emph{Classical {B}anach spaces. {II}},
  Ergebnisse der Mathematik und ihrer Grenzgebiete [Results in Mathematics and
  Related Areas], vol.~97, Springer-Verlag, Berlin-New York, 1979, Function
  spaces.

\bibitem[Lyu99]{Lyu99}
Y.I. Lyubich, \emph{Spectral localization, power boundedness and invariant
  subspaces under {R}itt's type condition}, Studia Math. \textbf{134} (1999),
  no.~2, 153--167.

\bibitem[McI86]{MC86}
A.~McIntosh, \emph{Operators which have an {$H_\infty$} functional calculus},
  Miniconference on operator theory and partial differential equations ({N}orth
  {R}yde, 1986), Proc. Centre Math. Anal. Austral. Nat. Univ., vol.~14,
  Austral. Nat. Univ., Canberra, 1986, pp.~210--231.

\bibitem[MCSA01]{CarSan01}
C.~Mart{\'{\i}}nez~Carracedo and M.~Sanz~Alix, \emph{The theory of fractional
  powers of operators}, North-Holland Mathematics Studies, vol. 187,
  North-Holland Publishing Co., Amsterdam, 2001.

\bibitem[Nev93]{Nev93}
O.~Nevanlinna, \emph{Convergence of iterations for linear equations}, Lectures
  in Mathematics ETH Z{\"u}rich, Birkh{\"a}user Verlag, Basel, 1993.

\bibitem[Nik02]{Nikolski-Book2}
N.~K. Nikolski, \emph{Operators, functions, and systems: an easy reading.
  {V}ol. 2}, Mathematical Surveys and Monographs, vol.~93, American
  Mathematical Society, Providence, RI, 2002, Model operators and systems,
  Translated from the French by Andreas Hartmann and revised by the author.

\bibitem[NZ99]{NagZem99}
B.~Nagy and J.~Zem{{\'a}}nek, \emph{A resolvent condition implying power
  boundedness}, Studia Math. \textbf{134} (1999), no.~2, 143--151.

\bibitem[Pac69]{Packel}
E.~W. Packel, \emph{A semigroup analogue of {F}oguel's counterexample}, Proc.
  Amer. Math. Soc. \textbf{21} (1969), 240--244.

\bibitem[Pat88]{Pat}
Alan L.~T. Paterson, \emph{Amenability}, Mathematical Surveys and Monographs,
  vol.~29, American Mathematical Society, Providence, RI, 1988.

\bibitem[Paz83]{Pazy}
A.~Pazy, \emph{Semigroups of linear operators and applications to partial
  differential equations}, Applied Mathematical Sciences, vol.~44,
  Springer-Verlag, New York, 1983.

\bibitem[Pis94]{Pis94}
G.~Pisier, \emph{Complex interpolation and regular operators between {B}anach
  lattices}, Arch. Math. (Basel) \textbf{62} (1994), no.~3, 261--269.

\bibitem[Pis01]{Pis01}
\bysame, \emph{Similarity problems and completely bounded maps}, expanded ed.,
  Lecture Notes in Mathematics, vol. 1618, Springer-Verlag, Berlin, 2001,
  Includes the solution to ``The Halmos problem''.

\bibitem[Pis11]{Pisier-IHP}
\bysame, \emph{Martingales in banach spaces (in connection with type and
  cotype)}, \url{http://www.math.jussieu.fr/~pisier/ihp-pisier.pdf}, 2011.

\bibitem[PX03]{PX03}
G.~Pisier and Q.~Xu, \emph{Non-commutative {$L^p$}-spaces}, Handbook of the
  geometry of {B}anach spaces, {V}ol.\ 2, North-Holland, Amsterdam, 2003,
  pp.~1459--1517.

\bibitem[Ran01]{Ran01}
B.~Randrianantoanina, \emph{Norm-one projections in {B}anach spaces}, Taiwanese
  J. Math. \textbf{5} (2001), no.~1, 35--95, International Conference on
  Mathematical Analysis and its Applications (Kaohsiung, 2000).

\bibitem[Sim74]{Sim74}
B.~Simon, \emph{The {$P(\phi )_{2}$} {E}uclidean (quantum) field theory},
  Princeton University Press, Princeton, N.J., 1974, Princeton Series in
  Physics.

\bibitem[Vit05]{Vit05}
P.~Vitse, \emph{A band limited and {B}esov class functional calculus for
  {T}admor-{R}itt operators}, Arch. Math. (Basel) \textbf{85} (2005), no.~4,
  374--385.

\bibitem[vNVW13]{NeeVerWei13}
J.M.A.M. van Neerven, M.~Veraar, and L.~Weis, \emph{Stochastic integration in
  banach spaces - a survey}, 7 2013, arXiv:1304.7575.

\bibitem[vNW05]{vNeWei05}
J.M.A.M. van Neerven and L.~Weis, \emph{Stochastic integration of functions
  with values in a {B}anach space}, Studia Math. \textbf{166} (2005), no.~2,
  131--170.

\bibitem[Wei01]{Weis01}
L.~Weis, \emph{Operator-valued {F}ourier multiplier theorems and maximal
  {$L_p$}-regularity}, Math. Ann. \textbf{319} (2001), no.~4, 735--758.

\bibitem[Wei06]{Weis-06-Survey}
\bysame, \emph{The {$H^\infty$} holomorphic functional calculus for sectorial
  operators---a survey}, Partial differential equations and functional
  analysis, Oper. Theory Adv. Appl., vol. 168, Birkh\"auser, Basel, 2006,
  pp.~263--294.

\bibitem[Yos80]{Yos80}
K.~Yosida, \emph{Functional analysis}, sixth ed., Grundlehren der
  Mathematischen Wissenschaften, vol. 123, Springer-Verlag, Berlin, 1980.

\end{thebibliography}

\end{document}